\let\mathbb\mathds
\newtheorem{theorem}{Theorem}[section]
\newtheorem{lemma}[theorem]{Lemma}
\newtheorem{corollary}[theorem]{Corollary}
\newtheorem{proposition}[theorem]{Proposition}
\theoremstyle{remark}
\newtheorem{rem}[theorem]{Remark}
\theoremstyle{definition}
\newtheorem{definition}[theorem]{Definition}
\DeclareMathOperator{\im}{Im}
\def\del{\partial}              
\def\bC{\mathbb C}          
\def\bR{\mathbb R}          
\def\bQ{\mathbb Q}          
\def\bN{\mathbb N}          
\def\bZ{\mathbb Z}          
\def\bT{\mathbb T}          
\def\bP{\mathbb P}
\def\bcp{\mathbb C \mathbb P}
\def\mF{\mathcal{F}}            
\def\mS{\mathcal{S}}            
\def\mL{\mathcal{L}}            
\def\mS{\mathcal{S}}            
\def\mO{\mathcal{O}}
\def\Span{\mbox{span}}
\def\bfE{\mbox{{\bf E}}}
\def\bfH{\mbox{{\bf H}}}
\def\bfN{\mbox{{\bf N}}}
\def\kt{\mathfrak{t}}
\def\ra{\rightarrow}
\def\vol{d\varpi}
\def\length{d\sigma}
\def\zeta{{\rm A}}
\def\del{\partial}
\def\Polytope{ P}
\def\mu{ x}
\newcommand{\defin}[1]{\textit{#1}}
 \newcommand*{\quot}[2]%
{\ensuremath{%
   \raisebox{.35ex}{\ensuremath{#1}}\big/\raisebox{-.35ex}{\ensuremath{#2}}}}
\newcommand{\bigO}{\large{O}}
\begin{document}

\title{Toric K\"ahler--Einstein metrics and convex compact polytopes}
\author{Eveline Legendre}
\date{\today}
\address{I.M.T., Universit\'e Paul Sabatier, 31062 Toulouse cedex 09, France }
\email{eveline.legendre@math.univ-toulouse.fr}


\subjclass[2010]{Primary 32Q20; Secondary 53C99}
\keywords{K\"ahler--Einstein structures, K\"ahler--Ricci solitons, Toric geometry}

\thanks{The author is grateful to Professor Vestislav Apostolov for bringing her attention to this problem and for his numerous advice. This project ended during a stay at the MIT, the author thanks Professor Victor Guillemin for his interest in this problem. She also thanks Yanir Rubinstein for his explanations of some subtleties concerning singular K\"ahler metrics. Finally, she thanks the referees for pointing to her a mistake in a previous version
of this work and giving her ideas to complete a proof.}

\maketitle

\begin{abstract}
{ We show that any compact convex simple lattice polytope is the moment polytope of a K\"ahler--Einstein orbifold, unique up to orbifold covering and homothety. We extend the Wang--Zhu Theorem~\cite{WZ} giving the existence of a K\"ahler--Ricci soliton on any toric monotone manifold on any compact convex simple labelled polytope satisfying the combinatoric condition corresponding to monotonicity. We obtain that any compact convex simple polytope $P\subset \bR^n$ admits a set of inward normals, unique up to dilatation, such that there exists a symplectic potential satisfying the Guillemin boundary condition (with respect to these normals) and the K\"ahler--Einstein equation on $P\times \bR^n$. We interpret our result in terms of existence of singular K\"ahler--Einstein metrics on toric manifolds.}
\end{abstract}

\section{Introduction}

The question of existence of K\"ahler--Einstein metrics on compact complex manifold has been subject of intense investigations for the last decades. This problem makes sense on a compact complex manifold $(M^{2n},J)$ with a given K\"ahler class $\Omega\in H^{2}_{dR}(M)$ for which there is $\lambda\in \bR$ such that $\lambda\Omega=2\pi c_1(M)$. The case $\lambda\leq 0$ is non-obstructed and the existence of a K\"ahler--Einstein metric $(g,\omega)$, with $\omega\in\Omega$ was proved forty years ago~\cite{aubin,Y}. The case $\lambda>0$ proved to be a more difficult question, recently related to a certain notion of stability~\cite{CDS1,CDS2,CDS3,TianProof} and for which there are various known obstructions, notably the Futaki invariant~\cite{futaki}.
In the toric case (the K\"ahler structure is invariant by the Hamiltonian action of a real torus of dimension $n=\dim_{\bC}M$), it follows from the Wang--Zhu Theorem~\cite{WZ} which has been extended to orbifolds~\cite{SZ}, that the only obstruction to the existence of K\"ahler--Einstein metrics on monotone symplectic toric orbifolds (in the sense that there exists $\lambda>0$ such that $\lambda[\omega]=c_1(M)$) is the vanishing of the Futaki invariant. Through the toric correspondence, finding such orbifolds is a combinatorial problem on labelled polytopes. In the first part of this paper, we prove that any polytope can be labelled to satisfy these two conditions. To give a precise statement, we now recall the main lines of the correspondence.

Symplectic toric compact orbifolds are classified by {\it rational} labelled polytopes via the Delzant--Lerman--Tolman correspondence~\cite{delzant:corres,LT:orbiToric}. A labelled polytope is a pair $(P,\nu)$ where $P$ is a simple bounded convex polytope, open in a $n$--dimensional vector space $\kt^*$, $\nu=\{\nu_1,\dots,\nu_d\}\subset \kt$ is a set of vectors, inward to $P$, such that if we denote $F_1$,$\dots$, $F_d$ the facets (codimension $1$ face) of $P$, the vector $\nu_k$ is normal to $F_k$ for $k=1,\dots,d$ where $d$ is the number of facets.  The {\it defining functions} of a labelled polytope $(P,\nu)$ are the affine-linear functions $L_1,\dots, L_d$ on $\kt^*$ such that\footnote{ Our convention is that $P$ is open and we denote $\overline{P}=\{p\in\kt^*\,|\,L_k(p)\geq 0\}$. $\overline{P}$ is compact.} $P=\{p\in\kt^*\,|\,L_k(p)>0\}$ and $dL_k=\nu_k$. A rational labelled polytope $(P,\nu,\Lambda)$ is a labelled polytope $(P,\nu)$ and $\Lambda$ a lattice in $\kt$ such that $\nu\subset\Lambda$.

 \begin{rem} If $(P,\nu,\Lambda)$ is rational, there are (uniquely determined) positive integers $m_1,\dots,m_d$ such that $\frac{1}{m_i}\nu_{i}$ are primitive elements of $\Lambda$. Then $(P, m_1,\dots, m_d)$ is a \defin{rational labelled polytope} in the sense of Lerman--Tolman \cite{LT:orbiToric}.
  \end{rem}

For a given symplectic toric compact orbifold $(M,\omega,T)$, $\kt$ is the Lie algebra of the torus $T=\quot{\kt}{\Lambda}$ and the closure $\overline{P}$ is the image of the moment map. The symplectic properties are encoded in the data $(P,\nu)$. Notably, see~\cite{don:Large}, monotone symplectic toric orbifolds correspond to what we will call {\it monotone labelled polytopes}.
\begin{definition}\label{defMONOTONE}
 We say that $(P,\nu)$ is {\it monotone} if there exists $p\in P$ such that $L_1(p)=L_2(p)=\dots=L_d(p)$. In that case, we call $p$ the {\it preferred point} of $(P,\nu)$. \end{definition}

The space of invariant K\"ahler metrics on $M$ is parameterized by a subspace of convex functions on $P$, the set of symplectic potentials $\mS(P,\nu)$, see~\cite{abreuOrbifold,H2FII,don:extMcond}, whose definition we precisely recall in~\S\ref{subSectBC}. The scalar curvature of the metric $g_u$, associated to $u\in\mS(P,\nu)$, is given by the Abreu formula \begin{equation}\label{abreuForm} S(u)=-\sum_{i,j=1}^n \frac{\del^2 u^{ij}}{\del x_i\del x_j}\end{equation}
where $(x_1,\dots,x_n)$ are coordinates on $\kt^*$ and $u^{ij}=(\mbox{Hess }u)^{-1}$, see~\cite{abreu,abreuOrbifold}.

The {\it extremal affine function} of $(P,\nu)$, denoted $\zeta_{(P,\nu)}$, is an affine--linear function on $\kt^*$ which corresponds to the Futaki invariant~\cite{futaki} restricted to the (real) Lie algebra of the torus (the symplectic counterpart of the Futaki invariant as introduced in \cite{lejmi}). In particular, $\zeta_{(P,\nu)}$ is constant if and only if the Futaki invariant vanishes on $\kt$.  The extremal affine function is a useful invariant of $(P,\nu)$ since it satisfies
\begin{itemize}
  \item[-] $g_u$ is extremal, in the sense of Calabi~\cite{calabi}, if and only if $S(u)=\zeta_{(P,\nu)}$
  \item[-] $\zeta_{(P,\nu)}$ is constant, should a constant scalar curvature $T$--invariant compatible K\"ahler (cscK) metric exist.
\end{itemize}

In this paper, we prove the following statement.
\begin{theorem} \label{theo1} Given a compact simple convex polytope $\overline{P}$, there exists a set of normals $\nu$, unique up to dilatation, such that $(P,\nu)$ is monotone and has a constant extremal affine function. \end{theorem}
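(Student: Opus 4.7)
My plan is to translate the two conditions on $(P,\nu)$ into a single affine equation for the preferred point $p_0\in P$, and then solve that equation explicitly using the cone decomposition of $P$ with apex $p_0$.

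First I would reduce to that affine equation. Fix auxiliary inward normals $\nu_k^{\text{ref}}$ with corresponding defining functions $\ell_k^{\text{ref}}$; writing $\nu_k = r_k\nu_k^{\text{ref}}$, the monotonicity condition $L_1(p_0)=\dots=L_d(p_0)=c$ forces $r_k = c/\ell_k^{\text{ref}}(p_0)$, so modulo the dilatation $c\mapsto\lambda c$ the monotone labellings are in bijection with points $p_0\in P$, and the corresponding boundary measure satisfies $d\sigma_k = \ell_k^{\text{ref}}(p_0)\,d\sigma_k^{\text{ref}}$. The extremal affine function $\zeta_{(P,\nu)}$ is characterized by the duality $\int_P \zeta f\,dx = 2\int_{\partial P} f\,d\sigma$ for every affine $f$ on $\kt^*$, so testing $f\equiv 1$ only fixes the value of the constant, while testing against linear functions centered at the Lebesgue centroid $\bar p$ of $P$ shows that $\zeta$ is constant if and only if the $\sigma$-barycenter of $\partial P$ equals $\bar p$. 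In our parameterization this becomes the affine equation in $p_0$:
\[
G(p_0) := \sum_k \ell_k^{\text{ref}}(p_0)\,|F_k|_{\sigma^{\text{ref}}}\bigl(\bar x_{F_k} - \bar p\bigr) = 0 \quad\text{in } \kt^*.
\]

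The heart of the proof is then the cone decomposition $P = \bigsqcup_k C_k^{p_0}$ with $C_k^{p_0}=\mathrm{conv}(\{p_0\}\cup F_k)$. With the Abreu--Donaldson convention on $d\sigma$, each pyramid $C_k^{p_0}$ has Euclidean volume $\tfrac{1}{n}\,\ell_k^{\text{ref}}(p_0)|F_k|_{\sigma^{\text{ref}}}$ and centroid $\tfrac{1}{n+1}(p_0 + n\,\bar x_{F_k})$. Summing volumes yields the universal identity $\sum_k \ell_k^{\text{ref}}(p_0)|F_k|_{\sigma^{\text{ref}}} = nV(P)$, valid for every $p_0\in P$, while summing volume-weighted centroids and comparing with $V(P)\bar p$ gives
\[
\sum_k \ell_k^{\text{ref}}(p_0)\,|F_k|_{\sigma^{\text{ref}}}\,\bar x_{F_k} = V(P)\bigl((n+1)\bar p - p_0\bigr).
\]
Substituting both identities into $G(p_0)=0$ collapses the equation to $V(P)(\bar p-p_0)=0$, whose unique solution $p_0=\bar p$ automatically lies in $P$. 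The corresponding labelling $\nu_k = \nu_k^{\text{ref}}/\ell_k^{\text{ref}}(\bar p)$, free up to an overall positive scaling, is therefore the unique monotone labelling with constant extremal affine function, proving Theorem~\ref{theo1}.

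The only technically delicate point in this plan is checking that the boundary measure $d\sigma$ used in the definition of $\zeta_{(P,\nu)}$ is exactly the one for which the cone-volume identity $\sum_k \ell_k^{\text{ref}}(p_0)|F_k|_{\sigma^{\text{ref}}} = nV(P)$ holds; this is a direct application of Stokes' theorem to the radial vector field $V(x)=x-p_0$, after which the entire argument is elementary convex geometry. Conceptually the content of the theorem is then very clean: the preferred point of the K\"ahler--Einstein labelling is forced to be the Lebesgue centroid $\bar p$ of $P$.
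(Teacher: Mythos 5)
Your argument is correct and lands on the same conclusion as the paper --- the preferred point of the distinguished labelling is the Lebesgue centroid $\bar p$ of $P$ --- but it gets there by a genuinely different computational route. Both proofs start the same way: monotone labellings are parameterized, up to dilatation, by the candidate preferred point $p_0\in P$ (Lemma~\ref{LEMconeMONOTONE}), and constancy of the extremal affine function becomes an affine equation in $p_0$. The paper then works directly with the linear system~\eqref{systCHPext}, converts $W_{00}$ and $W_{i0}$ into boundary integrals via the radial vector field, and invokes Lemma~\ref{lemSIMPLEX} (the identity $\Psi_{(\Polytope,\vol)}=-{\rm vol}(\Polytope,\vol)\,{\rm Id}$, proved by triangulating $P$ into simplices and computing explicitly on the standard simplex). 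You instead use the centroid--coincidence characterization of constant $\zeta_{(P,\nu)}$ --- which you correctly re-derive from the duality $\tfrac12\int_P \zeta f\,\vol=\int_{\partial P}f\,\length_\nu$, and which the paper only quotes as a remark of Donaldson --- and you replace Lemma~\ref{lemSIMPLEX} by the decomposition of $P$ into the pyramids ${\rm conv}(\{p_0\}\cup F_k)$, whose volume and centroid formulas yield exactly the two identities needed to collapse $G(p_0)=0$ to $V(P)(\bar p-p_0)=0$. I checked the pyramid volume $\tfrac1n\,\ell_k^{\rm ref}(p_0)\,|F_k|_{\sigma^{\rm ref}}$ against the convention $\nu_k\wedge\length_{\nu}=-\vol$ and it is consistent, so the one step you flag as delicate goes through. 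Your route buys a shorter, more elementary argument whose only global input is the solid geometry of cones with apex $p_0$ (no triangulation, no normalization on a model simplex), and it makes the appearance of the centroid transparent; the paper's route buys the standalone Lemma~\ref{lemSIMPLEX}, which is of independent interest and feeds into Corollary~\ref{centerMASS=preferredpoint}.
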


In dimension $2$, the existence of such labelling follows from elementary considerations~\cite{don:extMcond}.
\begin{rem} As noticed in~\cite{don:scalar}, labelled polytopes with constant extremal affine function are those for which the centers of mass of $(P,\vol)$ and $(\del P,\length_{\nu})$ coincide, where $\length_{\nu}$ is the volume form on $\del P$ such that $\nu_k\wedge\length_{\nu}=-\vol$ on the facet $F_k$. The set of normals $\nu$ given by Theorem~\ref{theo1} is characterized by the fact that the preferred point of $(P,\nu)$ (as a monotone labelled polytope) coincides with the center of mass of $(\del P,\length_{\nu})$. This last characterization was proved by Mabuchi~\cite{mabuchi} and used to classify toric complex surfaces admitting a compatible K\"ahler--Einstein metric : $\bcp^2, \bcp^1\times \bcp^1$ and $\bcp^2 \#3 \bcp^2$.
\end{rem}

We will see that the set of normals $\nu$ given by Theorem~\ref{theo1} can be included in a lattice if and only if $P$ is a lattice polytope (i.e whose vertices lie in a lattice) and thus, using the Theorem of Wang--Zhu/Shi--Zhu~\cite{WZ,SZ} we get

\begin{corollary} \label{question1} Every (simple convex compact) lattice polytope is the moment polytope of a compact K\"ahler--Einstein toric orbifold, unique up to dilatation or orbifold covering. \end{corollary}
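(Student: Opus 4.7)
The plan is to combine Theorem~\ref{theo1} with the Wang--Zhu/Shi--Zhu existence theorem via the Delzant--Lerman--Tolman correspondence; the single non-routine ingredient is the rationality of the normals produced by Theorem~\ref{theo1}.

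Starting from a lattice polytope $\overline{P}\subset\kt^*$ (vertices in $\Lambda^*$, with $\Lambda\subset\kt$ the dual lattice), I would first apply Theorem~\ref{theo1} to obtain a set of inward normals $\nu=\{\nu_1,\dots,\nu_d\}$, unique up to a common positive scalar, such that $(P,\nu)$ is monotone with constant extremal affine function. By the Remark following Theorem~\ref{theo1}, constancy of the extremal affine function is equivalent to equality of the centers of mass of $(P,d\varpi)$ and $(\partial P,d\sigma_{\nu})$; together with monotonicity this forces the preferred point of $(P,\nu)$ to be the centroid $c_0$ of $(P,d\varpi)$, which depends only on $\overline{P}$. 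Now $c_0$ has rational coordinates because $\overline{P}$ is a lattice polytope. Writing $\nu_k=r_k u_k$ with $u_k\in\Lambda$ the primitive inward lattice normal to $F_k$, the monotone condition $L_1(c_0)=\cdots=L_d(c_0)=\alpha$ reads
\[
r_k\,\langle u_k,\,c_0-x_k\rangle=\alpha
\]
for any chosen $x_k\in F_k$. Taking $x_k$ to be a lattice vertex of $F_k$, the denominator is a positive rational number, so each $r_k$ is a positive rational multiple of $\alpha$. Rescaling $\alpha$ by a common integer then arranges $\nu_k\in\Lambda$ for all $k$, so that $(P,\nu,\Lambda)$ is a rational labelled polytope.

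The Delzant--Lerman--Tolman correspondence~\cite{delzant:corres,LT:orbiToric} now associates to $(P,\nu,\Lambda)$ a compact symplectic toric orbifold $(M,\omega,T)$ with moment image $\overline{P}$. By~\cite{don:Large}, monotonicity of $(P,\nu)$ is equivalent to $[\omega]$ being a positive multiple of $c_1(M)$, and the extremal affine function being constant means the toric Futaki invariant vanishes. The Wang--Zhu theorem~\cite{WZ}, in its Shi--Zhu orbifold extension~\cite{SZ}, then provides a $T$-invariant K\"ahler--Einstein metric compatible with $\omega$. Uniqueness up to dilation and orbifold covering follows by inverting the correspondence: any K\"ahler--Einstein toric orbifold with moment polytope $\overline{P}$ produces, via its labelled polytope, a monotone labelling with constant extremal affine function, which by the uniqueness in Theorem~\ref{theo1} must be a dilate of $(P,\nu)$; the remaining freedom lies in the choice of lattice containing $\nu$, which corresponds precisely to taking orbifold covers.

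The step requiring real care is the rationality argument in the second paragraph: one has to identify the preferred point of the labelling from Theorem~\ref{theo1} with the centroid $c_0$ (which is immediate from the cited Remark), and then use the explicit linear formula for $r_k$ to deduce rationality; everything else is an assembly of results already stated in the excerpt.
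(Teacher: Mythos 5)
Your proposal is correct and takes essentially the same route as the paper: Theorem~\ref{theo1}, then rationality of the resulting normals via the identification of the preferred point with the (rational) centroid of the lattice polytope, then Delzant--Lerman--Tolman and Wang--Zhu/Shi--Zhu, with uniqueness coming from the uniqueness in Theorem~\ref{theo1} together with the freedom in the choice of lattice (Remark~\ref{LatticeChoice}). Your rationality step, phrased via primitive lattice normals $u_k$ and the explicit factors $r_k=\alpha/\langle u_k,c_0-x_k\rangle$, is only a cosmetic variant of the paper's Lemma~\ref{lemmaRATIONALlattice}, which instead translates the preferred point to the origin and uses that each facet contains a linear basis of lattice vertices; both arguments rest on the same key fact (Corollary~\ref{centerMASS=preferredpoint}).
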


The case where the set of normals $\nu$ given by Theorem~\ref{theo1} is not rational motivates us to extend the Wang--Zhu Theorem for general labelled polytopes. More precisely, Wang and Zhu showed in~\cite{WZ} that any Fano toric manifold $(M^{2n},J,T)$ admits a K\"ahler--Ricci soliton $(g,Z)$, that is, a K\"ahler metric $g$ and a holomorphic vector field $Z$ such that
\begin{equation}\label{KRSequation}
 \rho^g -\lambda\omega =\mL_Z\omega
\end{equation}
where $\rho^g$ is the Ricci form of $g$, $\omega$ the K\"ahler form $\omega= g(J\cdot,\cdot)$ and $\lambda = \frac{1}{2n}\,\overline{\mathrm{Scal}}$ with $\overline{\mathrm{Scal}}=\int_{M} \mathrm{Scal}\, \omega^{n}/\int_M\omega^n$. In that case, $2\pi c_1(M)=\lambda [\omega]$. The vector field $Z$ is uniquely determined by the data $(M,[\omega], T)$ as follows: denoting $p$ the preferred point of the monotone labelled polytope $(P,\nu)$ associated to $(M,[\omega],T)$, there is a unique linear function on $\kt^*$, $a\in\kt$, such that
\begin{equation}\label{RSvector}
  \int_P e^{2a} (f-f(p))\vol =0
\end{equation} for all $f\in\mbox{Aff}(P,\bR)$. If a holomorphic vector field $Z$ satisfies~\eqref{KRSequation}, then $Z= JX_a-iX_a$, see~\cite{TZ,WZ}. The case $a=0$ implies that $Z=0$ and the K\"ahler-Ricci soliton of Wang--Zhu is a K\"ahler--Einstein metric.\\

According to the work of Donaldson~\cite{don:Large}, a symplectic potential $u\in\mS(\Polytope,\nu)$ corresponds to a K\"ahler--Ricci soliton with respect to $\lambda>0$ and $a\in\kt$ if and only if
\begin{equation}\label{eq:KRsolitonPOT}
  \frac{1}{2}\log \det (\mbox{Hess }u)_x +\lambda h(x)= a(x)
\end{equation}
where $h$ is the Legendre transform of $u$ (seen as a function on $P$, via the change of variable $x\mapsto (d u)_{x}\in\kt$) and the preferred point of $(P,\nu)$ is the origin.
%

In fact, the argument of Wang--Zhu holds for any labelled polytope without any deep modification. In order to find appropriate scope for extending their proof in the case when $(P,\nu)$ is non necessarily rational, we consider $P\times \kt$ with its symplectic structure (that is, $P\times \kt \subset \kt^*\times \kt\simeq T^*\kt$) and the $\kt$--Hamiltonian action by translation on the second factor, the moment map being the projection on the first factor. The invariant K\"ahler metric $g_u$, for a symplectic potential $u\in\mS(P,\nu)$, is simply a $\kt$--invariant K\"ahler metric on $P\times \kt$ with specific behavior along $\del P\times \kt$. As introduced in \cite{DuistPelayo}, see also~\cite{don:Large,don:extMcond} and \S\ref{subSectCPT}, for each vertex $p$ of $P$ there is an open toric symplectic manifold $(M_p,\omega_p,T_p)$ depending only on $(P,\nu)$. In the rational case, $(M_p,\omega_p,T_p)$ is a uniformizing chart for the orbifold. The boundary condition on symplectic potentials corresponds to the fact that $g_u$ defines a smooth metric on each of the manifolds $(M_p,\omega_p,T_p)$, see~\S\ref{subSectBC}. In Section~\ref{secKRSorbi}, we notice that the test functions appearing in the proof of Wang--Zhu, behave as functions defined on the compact set $\overline{P}$ while the boundary condition, suitably interpreted, allows us to apply the (local) computations of Yau~\cite{Y} and Tian--Zhu~\cite{TZ} on each chart $(M_p,\omega_p,T_p)$. Along the way, we have to show that both Yau's Theorem~\cite{Y} and Zhu's Theorem~\cite{Z} hold, suitably interpreted, in this extended setting.  

\begin{theorem} \label{WZtheoGLP} Let $(P,\nu)$ be a monotone labelled polytope with preferred center $0\in\kt^*$ and compact closure $\overline{P}$. There exists a solution $u\in \mS(P,\nu)$ of equation~\eqref{eq:KRsolitonPOT}, so that $g_u$ is a $\kt$--invariant K\"ahler--Ricci soliton on $P\times \kt$. This solution $u$ is unique in $\mS(P,\nu)$ up to addition of an affine-linear function and $g_u$ is K\"ahler--Einstein if and only if $\zeta_{(P,\nu)}$ is constant.\end{theorem}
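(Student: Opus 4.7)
The plan is to adapt the continuity method of Wang--Zhu~\cite{WZ}, together with the uniqueness argument of Zhu~\cite{Z}, to the setting where $(P,\nu)$ need not be rational. The central observation, as alluded to in the discussion preceding the statement, is that although there is no single compact orbifold globally attached to $(P,\nu)$, the defining data still produce at each vertex $p$ of $P$ a local toric symplectic manifold $(M_p,\omega_p,T_p)$ on which Yau's and Tian--Zhu's local computations make sense, while the global test-function arguments of Wang--Zhu live naturally on the compact set $\overline{P}$.

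First I would set up a continuity path of the form
$$\frac{1}{2}\log\det(\mathrm{Hess}\,u_t)_x + t\,\lambda\, h_t(x) = a(x) + (1-t)\psi(x),$$
joining a concrete reference solution at $t=0$ (obtained from an explicit $u_0\in\mS(P,\nu)$ and a compensating smooth $\psi$) to the target equation \eqref{eq:KRsolitonPOT} at $t=1$, where $h_t$ is the Legendre transform of $u_t$. Let $I\subset[0,1]$ be the set of $t$ admitting a solution $u_t\in\mS(P,\nu)$. Openness of $I$ would follow from the implicit function theorem: the linearization is a self-adjoint (in the $e^{2a}$-weighted $L^2$ sense) elliptic operator whose kernel, thanks to the defining relation~\eqref{RSvector} for $a$, is exactly the affine-linear functions, precisely the indeterminacy modulo which we work.

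\textbf{A priori estimates.} This is where the main work lies. A $C^0$ estimate on the oscillation of $u_t - h_t$ has to be proved first; here the drift vector $a$ selected by~\eqref{RSvector} is essential, as it is designed precisely to cancel the obstruction to integrating by parts against affine test functions with weight $e^{2a}$. The Moser iteration and Harnack-type steps in~\cite{WZ} only use integrals on $\overline{P}$ against $\vol$ and the boundary measure $\length_\nu$, so they transfer verbatim. The $C^2$ estimate is then derived from Yau's calculation, which is purely local: on each chart $(M_p,\omega_p,T_p)$ the Guillemin boundary condition encoded in $\mS(P,\nu)$ (see~\S\ref{subSectBC}) ensures that $g_{u_t}$ extends as a smooth Kähler metric across the divisors, so the usual maximum principle applies on the closure; patching gives a uniform bound on $\det(\mathrm{Hess}\,u_t)$. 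Higher-order bounds then come from Calabi's $C^3$ computation, again carried out locally on each $M_p$. Closedness of $I$ then follows via Arzelà--Ascoli and standard bootstrap, which together with openness and $0\in I$ yields $1\in I$.

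Uniqueness up to an affine-linear function is the analogue of Zhu's theorem~\cite{Z}, proved via strict convexity of a modified Mabuchi functional along geodesics in $\mS(P,\nu)$; the relevant integrals live on $\overline{P}$ and the argument goes through unchanged. For the Einstein dichotomy, note that $a=0$ is equivalent to $\int_P (f-f(0))\,\vol=0$ for every affine $f$, i.e.\ the preferred point $0$ is the barycenter of $(P,\vol)$; by the remark following Theorem~\ref{theo1} this is precisely the condition that $\zeta_{(P,\nu)}$ be constant, and in that case~\eqref{eq:KRsolitonPOT} reduces to the Kähler--Einstein equation. The hardest step is the $C^0$ estimate: rewriting Wang--Zhu's toric integration-by-parts identities purely in terms of the labelled polytope, and verifying that the weight $e^{2a}$ still yields the coercivity of the modified Mabuchi functional when no lattice is available, is where the genuine new difficulty sits.
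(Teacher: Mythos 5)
Your overall strategy coincides with the paper's: a Wang--Zhu continuity path starting from Zhu's soliton-type equation, openness via the linearized operator whose kernel consists of affine-linear functions, global arguments ($C^0$ estimate, integration by parts, Stokes) carried out on the compact set $\overline{P}$, and local (Yau/Tian--Zhu/Calabi) second- and third-order estimates carried out on the vertex charts $(M_p,\omega_p,T_p)$, where the boundary condition makes $g_u$ the restriction of a smooth metric so that the maximum principle applies even when the maximum sits on $\del P$. The uniqueness statement and the Einstein dichotomy ($a=0$ iff the preferred point is the barycenter of $(P,\vol)$ iff $\zeta_{(P,\nu)}$ is constant) are also handled as in the paper.

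There is, however, one concrete step where your claim that the estimates ``transfer verbatim'' fails and a substitute argument is required: the a priori bound on $|Z.\psi|$, which enters Zhu's scheme and sits in the exponent on the right-hand side of the Monge--Amp\`ere equation \eqref{MA}, hence is needed before any second-order estimate can be closed. In \cite{Z} this bound is obtained using global properties of holomorphic vector fields on a compact complex manifold (including classification results), none of which is available when $(P,\nu)$ is irrational and there is no compact complex model. The paper replaces it by the elementary toric identity \eqref{insteadZHU}: since $Z=JX_a-iX_a$ and $\psi=\phi-\phi_o$, one has $(Z.\psi)_t=\langle a,x\rangle-\langle a,x_o\rangle$ with $x=d\phi_t$ and $x_o=d(\phi_o)_t$ both in $\overline{P}$, whence $|Z.\psi|$ is bounded by the oscillation of $\langle a,\cdot\rangle$ over the compact polytope. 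Without identifying this substitute your estimate scheme has a hole. A smaller point: the $C^0$ estimate in Wang--Zhu is not a coercivity statement about a modified Mabuchi functional but a real convex-geometry argument on the polytope itself (which is precisely why it survives the loss of the lattice), so the ``genuine new difficulty'' you locate there is misplaced; the places where the paper actually has to intervene are the $|Z.\psi|$ bound of \S\ref{sectZhuTheo} and the boundary maximum-principle argument of Lemma~\ref{uptoC3estimates}.
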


A result of Donaldson~\cite{don:extMcond} implies that the set of normals for which there exists a solution of the Abreu equation is open in the set of inward normals of a fixed polytope, see~\S\ref{openNORMALS}. Together with Theorems~\ref{theo1} and~\ref{WZtheoGLP}, it yields to

\begin{corollary}\label{propEXISTENCEextremal}
 For each $n$--dimensional polytope $P$, there exists a non-empty open set $\bfE(P)$ of inward normals $\nu$ for which there exists an extremal toric K\"ahler metric $g_u$ with $u\in\mS(P,\nu)$. Moreover, $\bfE(P)$ contains a codimension $n$ subset corresponding to cscK metrics and contains the $1$--dimensional cone of toric K\"ahler--Einstein metrics. In particular, if there exists a lattice for which $P$ is rational then there exist extremal toric K\"ahler orbifolds with moment polytope $\overline{P}$.
\end{corollary}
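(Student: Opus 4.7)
\emph{Proof plan.} The strategy will be to combine Theorems~\ref{theo1} and~\ref{WZtheoGLP} to produce a distinguished Kähler--Einstein point in $\bfE(P)$, and then to open it up via the openness principle recalled in~\S\ref{openNORMALS}.

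First I would apply Theorem~\ref{theo1} to obtain a set of inward normals $\nu^{\mathrm{KE}}$, unique up to dilation, making $(P,\nu^{\mathrm{KE}})$ monotone with constant extremal affine function. Using the remark following Theorem~\ref{theo1}, the preferred point coincides with the center of mass of $(P,\vol)$, and evaluating the defining relation~\eqref{RSvector} on affine test functions shows that $a=0$ is the unique solution. Theorem~\ref{WZtheoGLP} then produces $u_0\in\mS(P,\nu^{\mathrm{KE}})$ with $g_{u_0}$ Kähler--Einstein, hence extremal, so $\nu^{\mathrm{KE}}\in\bfE(P)$. By the uniqueness up to dilation in Theorem~\ref{theo1}, the family $\{t\nu^{\mathrm{KE}}:t>0\}$ exhausts the monotone labellings of $P$ with constant extremal affine function, and Theorem~\ref{WZtheoGLP} supplies a Kähler--Einstein metric for each; this yields the $1$--dimensional cone of Kähler--Einstein normals inside $\bfE(P)$.

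I would then invoke Donaldson's openness theorem recalled in~\S\ref{openNORMALS}: the set of inward normals admitting an extremal symplectic potential is open in the space of inward normals. Since $\nu^{\mathrm{KE}}\in\bfE(P)$, this set is non-empty and open. For the codimension $n$ stratum of cscK normals, I would observe that $\zeta_{(P,\nu)}$ depends smoothly on $\nu$ through explicit volume and boundary integrals, and that its constancy is equivalent to the vanishing of its $n$ linear coefficients. It will therefore suffice to show that the smooth map $\Phi\colon\nu\mapsto d\zeta_{(P,\nu)}\in\kt^*$ is a submersion at $\nu^{\mathrm{KE}}$; I expect this to follow by computing the first variation of the relevant integrals under an infinitesimal change of $|\nu_k|$ along each facet $F_k$ and noting that the resulting covectors span $\kt^*$ because the facet normals of a simple polytope are in general position. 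The preimage $\Phi^{-1}(0)\cap\bfE(P)$ will then be a codimension $n$ submanifold of cscK normals.

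Finally, for the rational case, if $P$ is rational with respect to some lattice $\Lambda\subset\kt$, then the rational inward normals (positive rational multiples of the primitive facet vectors of $\Lambda$) are dense among all inward normals. Combined with the openness of $\bfE(P)$, this produces rational $\nu\in\bfE(P)$, and the Delzant--Lerman--Tolman correspondence converts $(P,\nu,\Lambda)$ into a compact symplectic toric orbifold with moment polytope $\overline{P}$ that admits the extremal toric Kähler metric $g_u$. The hard part of this program will be the transversality check for $\Phi$ at $\nu^{\mathrm{KE}}$; every other step reduces to a direct consequence of Theorems~\ref{theo1} and~\ref{WZtheoGLP}, an elementary scaling, or the cited openness and density arguments.
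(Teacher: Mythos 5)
Your overall architecture matches the paper's: produce the K\"ahler--Einstein normals from Theorems~\ref{theo1} and~\ref{WZtheoGLP}, invoke Donaldson's openness of $\bfE(P)$ from \S\ref{openNORMALS}, and get the orbifold statement from openness plus density of rational normals. Those parts are fine. The one place where you diverge from the paper is the codimension--$n$ cscK stratum, and that is exactly the step you leave open.

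The gap is your transversality claim for $\Phi\colon\nu\mapsto d\zeta_{(P,\nu)}$. First, the justification you offer --- that ``the facet normals of a simple polytope are in general position'' --- is not the relevant fact. Writing $\nu(r)=\{\nu_1/r_1,\dots,\nu_d/r_d\}$, the partial derivative of the constancy conditions \eqref{systEQbary} in the direction $r_k$ is proportional to the vector with components $W_{i0}(\Polytope)\int_{F_k}\length_{\nu}-W_{00}(\Polytope)\int_{F_k}x_i\length_{\nu}$, i.e.\ to the difference between the barycenter of $(\Polytope,\vol)$ and the barycenter of $(F_k,\length_{\nu})$. Surjectivity of $\Phi$'s differential is therefore the statement that these $d$ barycenter differences span $\kt^*$, which has nothing to do with the position of the normals; it does hold (if all facet barycenters lay on an affine hyperplane $\{f=c\}$ through the barycenter of $P$, then $\Psi_{(\Polytope,\vol)}(f-c)=0$ and Lemma~\ref{lemSIMPLEX} forces $f$ to be constant), but you would need to supply this argument. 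Second, the implicit function theorem is overkill here: since $(\length_{\nu(r)})_{|_{F_k}}=r_k(\length_{\nu})_{|_{F_k}}$, the conditions \eqref{systEQbary0} are \emph{linear} in $r$, so the cscK locus of normals is a linear cone intersected with the positive quadrant --- this is Corollary~\ref{coroSETcscK}, which the paper feeds directly into the proof. That it has dimension exactly $d-n$ (equivalently, that the $n$ linear conditions are independent) is already contained in the proof of Theorem~\ref{theo1}: restricted to the $(n+1)$--dimensional monotone cone of Lemma~\ref{LEMconeMONOTONE}, the system \eqref{systEQbary5} has a unique solution $p$, so the $n$ equations cut that cone down to the $1$--dimensional KE ray and are therefore independent. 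Intersecting this $(d-n)$--dimensional cone with the open set $\bfE(P)$, which contains the KE ray, gives the codimension--$n$ cscK subset without any separate transversality computation. I recommend replacing your submersion step by this linearity observation, or at least replacing the ``general position of normals'' heuristic by the barycenter argument via Lemma~\ref{lemSIMPLEX}.
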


A compact toric symplectic orbifold associated to a rational labelled polytope $(P,\nu,\Lambda)$ is a compactification of $P\times T$ where $T=\kt/\Lambda$ and $P\subset \kt^*$. Consequently, as a straightforward application of Theorems~\ref{theo1} and~\ref{WZtheoGLP}, we obtain in~\S\ref{sesctSINGULAR}, for any smooth compact toric symplectic manifold, the existence of a toric K\"ahler--Einstein metric $g_{KE}$ on the open dense subset where the torus acts freely. The singular behavior of $g_{KE}$ along the pre-image of the interior of a facet $F_k$ is conical of angle $2\pi a_k$ where $a_k$ is the ratio between the normal $\nu_k$ and the normal to $F_k$ found in Theorems~\ref{theo1}. Here, the complex structure and the metric are singular while the symplectic structure is smooth. Using the standard procedure (with Legendre transform) to complexifies $P\times T$ the singularity lies along the pre-image of the boundary and we can pass to the more usual setting in the study of conical singularity of metrics, 
e.g.~\cite{Do:cone,Tsingular}, where the symplectic form and the metric are singular but not the complex structure.
%

Theorem~\ref{WZtheoGLP} provides an alternative proof of the Futaki--Ono--Wang Theorem~\cite{FutakiOnoWang}. This theorem establishes the existence of toric Sasaki--Ricci soliton on contact toric manifolds with a (fixed) Reeb vector field satisfying the two conditions :\\
  -- the basic first Chern form of the normal bundle of the Reeb foliation is positive,\\
  -- the first Chern class of the contact bundle is trivial. \\
Compact contact toric manifolds with a fixed Reeb vector field are in one-to-one correspondence with labelled polytopes whose defining functions lie in a lattice and satisfy a certain weaker condition than the Delzant condition, see~\cite{BG:book,moi:ENU,L:contactToric}. In this correspondence as well, a compatible toric Sasaki metric is given by a symplectic potential and the scalar curvature is given by the Abreu formula~\eqref{abreuForm}, up to an additive constant depending only on the dimension. The hypothesis of the Futaki--Ono--Wang Theorem corresponds to the fact that the associated labelled polytope is monotone, see~\cite{FutakiOnoWang,reebMETRIC}.

\section{Labelled polytopes and toric geometry}\label{sectBACKGROUND}
  For the purpose of this paper, we need to slightly reinterpret the geometry associated to a labelled polytope based on the approach~\cite{don:Large,don:extMcond,DuistPelayo}. In what follows, polytopes always refers to {\it simple bounded polytope} where {\it simple} means that each vertex is the intersection of no more than $n$ facets.  

\subsection{Symplectic toric orbifolds as compactifications}\label{subSectCPT}
Let $(M,\omega,T, \varrho)$ be a compact symplectic toric orbifold, that is, $\varrho : T\hookrightarrow \mbox{Ham}(M,\omega)$ embeds $T$ as a subgroup and $2\dim T=\dim M$. We denote $\kt=\mbox{Lie }T$. There is a moment map $$x:M\ra \kt^*$$ which is $T$--invariant and uniquely determined, up to addition of a constant, by the relation $d\langle x,a\rangle=- \omega(X_a,\cdot)$ where $X_a= d_{\mbox{\tiny{id}}}\varrho(a)$ is the vector field induced by the infinitesimal action of $a\in\kt$. The image of the moment map $\overline{P}=\im x$ is a compact convex simple polytope\footnote{We denote $P$ the interior of the polytope and $\overline{P}$ its closure. In this text, polytopes are always assumed to be convex and simple with compact closure.}. The weights of the action of the torus on the tangent spaces of fixed points determine a set of vectors $\nu=\{\nu_1,\dots,\nu_d\}\subset \kt$ normal to the facets of $P$ and lying in $\Lambda$, the lattice of circle subgroups of $T$, and thus makes $(P,\nu)$ a 
rational labelled polytope with respect to $\Lambda$ as defined in the introduction. The Delzant--Lerman--Tolman correspondence~\cite{delzant:corres,LT:orbiToric} states that the data $(P,\nu,\Lambda)$ characterizes $(M,\omega,T)$ up to a $T$--equivariant symplectomorphism.

In~\cite{DuistPelayo}, Duistermaat and Pelayo gave a way (alternative to the so-called Delzant construction~\cite{delzant:corres}) to build $(M,\omega,T)$ from the data $(\Polytope, \nu, \Lambda)$ in the smooth case, see also~\cite{don:Large}. The idea is based on the fact that $M$ can be seen as a compactification of $\Polytope\times T$ prescribed by the combinatorial data of $\Polytope$. We slightly adapt this construction here to cover the case of orbifolds and to see where it fails in the non rational case.

Given a labelled polytope $(\Polytope, \nu)$, we denote the set of (closed) faces of $\Polytope$ by $\mF(\Polytope)$. The facets of $P$ are still denoted $F_1,\dots, F_d\in\mF(\Polytope)$. For $F\in\mF(P)$, denote $I_{F}\subset\{1,\dots,d\}$, the set of indices such that $F=\bigcap_{k\in I_F} F_k$. For example, $\overline{\Polytope}\in \mF(\Polytope)$ and $I_{\overline{P}}=\emptyset$. For a vertex $p$, $I_{\{p\}}$ has $n$ elements and $\Lambda_p=\Span_{\bZ}\{\nu_k\,|\,k\in I_{\{p\}}\}$ is a lattice in $\kt$. For a face $F\in \mF_p(\Polytope)$, $T_F=\quot{\Span_\bR\{\nu_k\,|\,k\in I_F\}}{\Lambda_p\cap\Span_\bR\{\nu_k\,|\,k\in I_F\}}$ is a subtorus of $T_p=\quot{\kt}{\Lambda_p}$ if $p\in F$.

Given a vertex $p$ of $P$, we call $\mF_p(\Polytope)$ the set of faces containing $p$. For $F\in \mF_p(P)$, we denote $s_p(F)$ the subset of $F$ obtained by removing all the subfaces which does not contain $p$, that is $s_p(F) = \{x\,|\, x\in \mathring{E}, p\in E, E\subset F\}$ where $\mathring{E}$ is the interior of the face $E$ (in $E$). In particular, the interior of a vertex is the vertex itself. Thus, $\bigcup_{F\in \mF_p(P) } s_p(F)=\bigcup_{F\in\mF_p(P)}\mathring{F}$ is an open neighborhood of $p$ in $\overline{P}$. Set 
$$M_p= \quot{\bigsqcup_{F\in\mF_p(\Polytope)} (s_p(F)\times T_p/T_F)}{\sim}$$ where, for $(x,\theta) \in F\times T_p/T_F$ and $(x',\theta') \in F'\times T_p/T_{F'}$, $(x,\theta)\sim (x',\theta')$ if \begin{itemize}
  \item[1)] $x=x'$, and
  \item[2)] the equivalence classes of $\theta$ and $\theta'$ in $\quot{T_p}{T_{F\cap F'}}$ coincide.
\end{itemize}
Here, the first condition implies that $F\cap F'\neq \emptyset$, so $F\cap F'\in \mF_p(\Polytope)$ and $T_{F\cap F'}$ contains $T_F$ and $T_{F'}$ as subgroups. The second condition refers to the fact that $\quot{T_p}{T_{F\cap F'}}$ is the quotient of $T_p/T_F$ by $\quot{T_{F\cap F'}}{T_{F}}$ and the quotient of $T_p/T_{F'}$ by $\quot{T_{F\cap F'}}{T_{F'}}$.

Ordering the normals $\nu_{k_1},\dots,\nu_{k_n}$ ($k_i\in I_{\{p\}}$), we get an identification $T_p\simeq \bT^n=\quot{\bR^n}{\bZ^n}$ via which $T_p$ acts on $\bC^{n}$. For an equivariant neighborhood $U_p$ of $0\in \bC^n$, the map $\phi_p:U_p\ra M_p$, defined by
\begin{equation}
  \label{localEQUIV}\phi_p(z)= \left[\left(p + \frac{1}{2}|z_i|^2\nu_{k_i}^*, \left(e^{2\pi \sqrt{-1} \theta_1}, \dots ,e^{2\pi \sqrt{-1} \theta_n}\right)\right)\right]
\end{equation}
where $z=(|z_1|e^{2\pi \sqrt{-1} \theta_1},\dots, |z_n|e^{2\pi \sqrt{-1} \theta_n})$, is a well-defined (i.e does not depend on the choice of $e^{2\pi \sqrt{-1} \theta_i}$ when $|z_i|=0$) equivariant homeomorphism. The chart $\left(U_p, \phi_p\right)$ provides a (smooth) differential structure to $M_p$.

Now, the cotangent space of $T_p$ is naturally equipped with an exact symplectic form, the differential of the Liouville $1$--form, for which the action of $T_p$ on itself pull-backs to a Hamiltonian action. Given an equivariant trivialization $T^*T_p\simeq \kt^*\times T_p$, the product $P\times T_p$ inherits of the structure of Hamiltonian $T_p$--space whose moment map is simply the projection on the first factor. The chart above extends this structure to give a (non-compact) symplectic toric manifold $(M_p,\omega_p,T_p)$ with moment map $x:M_p \ra \kt^*$ so that $\mbox{Im } x= \bigcup_{F\in\mF_p(P)} \mathring{F}$, see \cite{don:Large,LT:orbiToric}.\\

When $(P,\nu,\Lambda)$ is rational, $\Lambda_p\subset \Lambda$ for all vertex $p$ and the quotient of $T_p$ by the finite subgroup $\quot{\Lambda}{\Lambda_p}$ is the torus $T=\quot{\kt}{\Lambda}$. The quotient map $q_p:T_p\ra T$ gives a way to glue $M_p$ to $P\times T$ providing an orbifold uniformizing chart with structure group $\quot{\Lambda}{\Lambda_p}$. Doing that on all vertices, we obtain the compact symplectic toric orbifold, $(M,\omega,T)$, associated to $(P,\nu,\Lambda)$ with moment map $x:M\ra \kt^*$.
\begin{definition}\label{DelzantCOND}\cite{delzant:corres}
  A rational labelled polytope $(P,\nu,\Lambda)$ is {\it Delzant} if, for each vertex $p$, $\Lambda_p=\Lambda$. In particular, $(P,\nu,\Lambda)$ is Delzant if and only if the associated symplectic toric orbifold is a manifold (all orbifold structure groups are trivial).
\end{definition}
\begin{rem} \label{LatticeChoice}Taking a bigger lattice $\Lambda\subset \Lambda'$, corresponds to taking the global quotient by the finite group $\quot{\Lambda'}{\Lambda}$, see~\cite{H2FII}.\end{rem}

\begin{rem} If $(P,\nu,\Lambda)$ is rational, we can replace $\Lambda_p$ by $\Lambda$ in the definition of $T_F$ and set  $$|M|= \quot{\bigsqcup_{F\in\mF(\Polytope)} (F\times T/T_F)}{\sim}$$ with the same equivalence relation as above.
The topological space $|M|$ is the underlying topological space of $M$ and is a compactification of $\Polytope\times T$. The choice of a labelling specifies an orbifold structure on $|M|$ but $|M|$ does not depend on it.
\end{rem}

\subsection{Action-angle coordinates}
To a convex polytope $P\subset \kt^*$ one can associate a symplectic manifold $(P\times \kt,dx\wedge d\theta)$ where $x=(x_1,\dots, x_n) : \kt^*\ra \bR^n$ and $\theta=(\theta_1,\dots, \theta_n): \kt\ra \bR^n$ are any sets of affine coordinates and $dx\wedge d\theta = \sum_{i=1}^n dx_i\wedge d\theta_i$ is (trivially) a symplectic form. More intrinsically, one could consider $T^*P$, the cotangent of the polytope itself (recall that $P$ is open in $\kt^*$), endowed with its canonical symplectic structure. 
The action of $\kt$ on $P\times \kt$ by translation on the second factor is Hamiltonian with moment map $x:P\times \kt\ra \kt^*$.

\begin{rem} One can choose $(x,\theta)$ to be {\it dual coordinates} (or {\it canonical coordinates}) on $\kt^*\times\kt^*$, defined for a given basis $e_1,\dots, e_n$ of $\kt$ as $x_i=\langle x, e_i\rangle$ and $\theta_i = \langle \theta, e^*_i\rangle$ where $e_1^*,\dots, e^*_n$ is the dual basis. Doing so would imply that $dx\wedge d\theta$ is a {\it canonical} symplectic form on $\kt\times \kt^*$ corresponding to the differential of the Liouville form on $T^*\kt=\kt\times \kt^*=T^*\kt^*$. However, this is not essential for our purpose since the obvious change of coordinates (say from $\theta$ to $\theta'$) on the second factor identify the structures ($dx\wedge d\theta$ and $dx\wedge d\theta'$).     
\end{rem} 

Given a compact symplectic toric orbifold $(M,\omega,T, \varrho)$ associated to $(P,\nu,\Lambda)$, the {\it action-angle coordinates} are local coordinates on $\mathring{M}=x^{-1}(P)$ (the subset of $M$ where $T$ acts freely) identifying locally $(\mathring{M},\omega_|)$ with $(P\times \kt,dx\wedge d\theta)$. Usually, the existence of such coordinates is proved using a compatible toric K\"ahler structure which is known to exist by the Delzant construction, see~\cite{abreuOrbifold,H2FII,CDG}. In view of the construction presented in \S~\ref{subSectCPT}, it is obvious that there is a $T$--equivariant symplectomorphism between $(\mathring{M},\omega_{|_{\mathring{M}}})$ and $(P\times T, dx\wedge d\theta)$. The universal cover of $\mathring{M}$, endowed with the symplectic form induced from $\omega_{|_{\mathring{M}}}$, is symplectomorphic to $(P\times \kt,dx\wedge d\theta)$. The action--angle coordinates are $(x,\theta)$ but seen as local coordinates on $\mathring{M}$ on which they satisfies $d\theta_i(X_j):=d\
\theta_i(d_{\mbox{\tiny{id}}}\varrho(e_j))=\delta_{ij}$.
\begin{rem} Two compact symplectic toric orbifolds $(M',\omega',T',\varrho')$, $(M,\omega,T,\varrho)$ associated to the same polytope $P$ (assuming the Lie algebra $\kt$ and $\kt'$ are identified but not the lattices) share the same action-angle coordinates in the sense that the symplectic manifolds $(\mathring{M}',\omega'_{|_{\mathring{M}'}})$ and $(\mathring{M},\omega_{|_{\mathring{M}}})$ have a common universal cover on wich $\omega'$ and $\omega$ pull back as the same symplectic structure (up to a diffeomorphism). \end{rem}
     
%
%

 \begin{proposition}\label{LOCALmetric} \cite{abreu} For any strictly convex function $u\in C^{\infty}(P)$, the metric
\begin{align}\label{ActionAnglemetric}
g_{u} = \sum_{i,j} G_{ij}dx_i\otimes dx_j + H_{ij}d\theta_i\otimes d\theta_j,
\end{align}
with $(G_{ij})=\mbox{Hess }u$ and $(H_{ij})=(G_{ij})^{-1}$, is a smooth K\"ahler structure on $P\times \kt$ compatible with the symplectic form $dx\wedge d\theta$. Conversely, any $\kt$--invariant compatible K\"ahler structure on $(P\times \kt, dx\wedge d\theta)$ is of this form.
 \end{proposition}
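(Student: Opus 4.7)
The plan is to prove both implications. For the forward direction, given a strictly convex $u\in C^\infty(P)$, I would define the almost complex structure $J$ on $P\times\kt$ by $J(\partial/\partial x_i) = \sum_j G_{ij}\,\partial/\partial\theta_j$ and $J(\partial/\partial\theta_i) = -\sum_j H_{ij}\,\partial/\partial x_j$. The identity $J^2=-\mathrm{id}$ is immediate from $GH = I$, compatibility $g_u(X,Y)=(dx\wedge d\theta)(X,JY)$ is a direct check on the basis $\{\partial_{x_i},\partial_{\theta_j}\}$, and positive-definiteness of $g_u$ follows from strict convexity of $u$. For integrability, I would pass to the Legendre-dual coordinates $y_i := \partial u/\partial x_i$, which form a diffeomorphism from $P$ onto their image since $\mathrm{Hess}(u) > 0$; in $(y,\theta)$ coordinates one computes $\partial/\partial y_i = \sum_j H_{ij}\partial/\partial x_j$ and hence $J(\partial/\partial y_i) = \partial/\partial\theta_i$, so that $z_j := y_j + \sqrt{-1}\,\theta_j$ are holomorphic coordinates and $J$ is integrable. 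Combined with $d\omega = 0$, this establishes that $g_u$ is K\"ahler.

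For the converse, let $(g,J,\omega=dx\wedge d\theta)$ be any $\kt$-invariant compatible K\"ahler structure on $P\times\kt$. Invariance of $g$ and $\omega$ forces invariance of $J$, so each $\partial/\partial\theta_i$ is Killing, Hamiltonian with moment component $x_i$, and real-holomorphic; hence $Y_i := J(\partial/\partial\theta_i)$ is real-holomorphic as well. Vanishing of the Nijenhuis tensor together with $\kt$-invariance yields $[Y_i,Y_j] = [Y_i,\partial/\partial\theta_j] = 0$, so $(Y_1,\dots,Y_n,\partial/\partial\theta_1,\dots,\partial/\partial\theta_n)$ forms a commuting frame on $P\times\kt$. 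Using simply-connectedness, these vector fields integrate to global coordinates $(y,\theta)$ with $\partial/\partial y_i = Y_i$, and $z_j := y_j + \sqrt{-1}\,\theta_j$ are holomorphic. The $\kt$-invariance then forces a global K\"ahler potential of the form $F(y)$; computing $\omega = \sqrt{-1}\,\partial\bar\partial F = \sum_{i,j} F_{ij}\,dy_i\wedge d\theta_j$ and equating with $\sum_i dx_i\wedge d\theta_i$ gives $x_j = \partial F/\partial y_j$. The Legendre transform $u$ of $F$, viewed as a function of $x$, then satisfies $y_i = \partial u/\partial x_i$ and $\mathrm{Hess}_x(u) = \mathrm{Hess}_y(F)^{-1}$, and a direct rewriting of $g$ in $(x,\theta)$-coordinates recovers the form stated in the proposition.

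The main obstacle is the global integration step in the converse: one must verify that the commuting vector fields $Y_i$ integrate to coordinates $y$ globally defined on all of $P$, and that the resulting $u$ extends as a smooth strictly convex function on $P$. This relies on simply-connectedness of $P$ (so the closed dual 1-forms are exact), the linear structure of the factor $\kt$ (ensuring completeness of the $Y_i$-flows, since $\kt$ is a vector space rather than merely a torus), and positive-definiteness of $g$ (which gives strict convexity of $u$). These observations together guarantee that the forward and converse constructions are genuine inverses of each other.
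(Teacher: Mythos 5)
Your argument is correct and follows the same route as the source the paper cites (Abreu/Guillemin) and as the paper's own discussion of complex coordinates in \S\ref{subsectCPLX}: define $J$ from $g_u$ and $\omega$, verify integrability via the Legendre-dual coordinates $y=du_x$ so that $z=y+\sqrt{-1}\theta$ are holomorphic, and in the converse direction use $\kt$-invariance and the vanishing Nijenhuis tensor to produce the commuting holomorphic frame, the invariant potential $F(y)$ with $x=\partial F/\partial y$, and the Legendre transform $u$ with $\mathrm{Hess}\,u=(\mathrm{Hess}\,F)^{-1}$. The paper gives no independent proof (it cites \cite{abreu}), and your sketch, including the points you flag about exactness of the dual coframe on the simply connected $P\times\kt$ and strict convexity of $u$ from positivity of $g$, is the standard and complete one.
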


 \subsection{The boundary condition}\label{subSectBC}
Here again $(P,\nu)$ is a labelled polytope (with $\overline{P}$ compact, convex and simple) and the functions $L_1,\dots, L_k$ are the affine-linear functions defining $(P,\nu)$ as $dL_k=\nu_k$ and $P=\{x\in\kt^*| L_k(x)>0,\, k=1,\dots,d\}$.

\begin{definition}
  A symplectic potential of $(P,\nu)$ is a continuous function $u\in C^0(\overline{P})$ whose restriction to $P$ or to any face's interior (except vertices), is smooth and strictly convex, and $u-u_o$ is the restriction of a smooth function defined on an open set containing $\overline{P}$ where $$u_o = \frac{1}{2}\sum_{k=1}^d L_k \log L_k$$ is the {\it Guillemin potential}. We denote by $\mS(P,\nu)$, the set of symplectic potentials.
\end{definition}
The Guillemin potential is a symplectic potential corresponding to the {\it Guillemin metric}~\cite{guillMET}. Denote $\mbox{Aff}(P,\bR)$ the space of real valued affine-linear functions on $P$.
\begin{proposition}
  \cite{H2FII, don:extMcond}
   The set of smooth compatible toric (orbifold) K\"ahler metrics on $(M,\omega,T)$ is in one-to-one correspondence with the quotient of $\mS(P,\nu)$ by ${\rm Aff}(P,\bR)$, acting by addition. The correspondence is explicit and given by \eqref{ActionAnglemetric}.
\end{proposition}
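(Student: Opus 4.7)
The plan is to establish the correspondence in three steps, each reducing to analysis in the local complex charts $(U_p,\phi_p)$ of \S\ref{subSectCPT} near each vertex $p$. The restriction of a $T$--invariant compatible K\"ahler metric on $M$ to $\mathring{M}=x^{-1}(P)$ is pulled back along the universal cover to a $\kt$--invariant compatible K\"ahler metric on $(P\times \kt, dx\wedge d\theta)$, so by Proposition~\ref{LOCALmetric} it is of the form $g_u$ for a strictly convex smooth function $u$ on $P$ determined by the metric up to an additive affine--linear function (since only $\mathrm{Hess}(u)$ enters the formula \eqref{ActionAnglemetric}). The whole content of the statement is therefore the equivalence, for a convex function $u$ on $P$, between the condition $u\in\mS(P,\nu)$ and the condition that $g_u$ extends smoothly across the boundary of $\mathring{M}$ in $M$.

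For the forward direction, assume $u=u_o+f$ with $f$ smooth on an open set containing $\overline{P}$, and work in a chart $(U_p,\phi_p)$ near a vertex $p$. In this chart the moment map reads $x=p+\tfrac{1}{2}\sum_i|z_i|^2\nu_{k_i}^*$, so $L_{k_i}\circ\phi_p=\tfrac12|z_i|^2+O(|z|^4)$ for $i\in I_{\{p\}}$ and $L_k\circ\phi_p$ is a positive smooth function of $|z|^2$ for the remaining indices $k$. A direct computation, going back to Guillemin~\cite{guillMET}, shows that $\phi_p^*g_{u_o}$ equals the flat metric on $U_p$ (up to an equivariant linear change of complex coordinates), and in particular is smooth on all of $U_p$. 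It then suffices to check that the perturbation $\phi_p^*g_u-\phi_p^*g_{u_o}$ coming from $f$ is smooth. Since $f$ is smooth in $x$ near $p$ and $x$ is smooth in $|z|^2$, both the $G$--block $(\mathrm{Hess}\,f)_{ij}dx_idx_j$ and its inverse perturbation in the $H$--block pull back to smooth tensors in the variables $z,\bar z$; the angular part causes no trouble because the $T_p$--equivariance of $\phi_p$ converts $d\theta_i$ into $d\arg z_i$ and the factor $H_{ij}$ vanishes to the correct order in $|z_i|^2$ to absorb the singularity, as in the Guillemin case. Summing over all vertices and using smoothness of $g_u$ away from vertices on the pre-image of each face interior gives a smooth K\"ahler metric on $M$.

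For the reverse direction, let $g$ be a smooth compatible toric K\"ahler metric on $M$ and let $u$ be its associated convex function on $P$. Working again in $(U_p,\phi_p)$, the smoothness of $g$ forces the symmetric tensor $\phi_p^*g$ to be a smooth, $T_p$--invariant, positive definite $(1,1)$--tensor on $U_p$ compatible with the standard symplectic form. Subtracting off the explicit Guillemin model $\phi_p^*g_{u_o}$ --- which by the previous step is itself smooth and realises the boundary behaviour of $u_o$ exactly --- one sees that $\phi_p^*(g_u-g_{u_o})$ is smooth on $U_p$; invariance and the form \eqref{ActionAnglemetric} then imply that $(\mathrm{Hess}(u-u_o))_{ij}$, regarded as a matrix of functions of $x$, extends smoothly through the facets meeting at $p$. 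Integrating twice (using continuity of $u$ on $\overline P$ and smoothness on the faces to fix constants of integration) yields a smooth extension of $u-u_o$ to a neighbourhood of $\overline P$, so that $u\in\mS(P,\nu)$. Uniqueness modulo $\mathrm{Aff}(P,\bR)$ is then built in: adding an affine function $\ell$ to $u$ leaves $\mathrm{Hess}\,u$, and hence $g_u$, unchanged, and preserves membership in $\mS(P,\nu)$ since $(u+\ell)-u_o=(u-u_o)+\ell$ is still smooth near $\overline{P}$.

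The main obstacle is the boundary analysis: one has to transport the prescribed logarithmic singularity $u_o=\tfrac12\sum L_k\log L_k$ through the equivariant chart $\phi_p$ and verify that the resulting metric, together with arbitrary smooth perturbations of $u-u_o$, defines a genuinely smooth tensor in the $z$--variables, without extraneous angular singularities. This is the step where the specific algebraic form of the Guillemin potential --- and correspondingly the precise definition of $\mS(P,\nu)$ --- is essential, and where the argument must be carried out once and for all in the model chart at a vertex before being globalised by compactness of $\overline P$ and the face stratification used to build $M_p$.
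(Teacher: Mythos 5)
The paper does not actually prove this proposition: it is quoted verbatim from \cite{H2FII,don:extMcond} (see also \cite{abreuOrbifold,guillMET}), so there is no internal argument to compare yours against. Your outline follows the standard route of those references --- reduce via Proposition~\ref{LOCALmetric} to a strictly convex $u$ on $P$ determined up to ${\rm Aff}(P,\bR)$, then show in the vertex charts $(U_p,\phi_p)$ that smooth extension of $g_u$ across $x^{-1}(\del P)$ is equivalent to $u-u_o\in C^{\infty}(\overline{P})$ --- and the overall structure, including the reduction of uniqueness to the fact that only $\mbox{Hess }u$ enters \eqref{ActionAnglemetric}, is correct.

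That said, your sketch asserts exactly the steps that carry the content. First, $\phi_p^*g_{u_o}$ is \emph{not} the flat metric on $U_p$: only the model potential $\tfrac12\sum_{i\in I_{\{p\}}}L_{k_i}\log L_{k_i}$ pulls back to the flat metric (note also that $L_{k_i}\circ\phi_p=\tfrac12|z_i|^2$ exactly, with no higher-order correction); the terms $L_k\log L_k$ for facets not through $p$ contribute a smooth but nontrivial perturbation, which is harmless but of precisely the same nature as the perturbation $f$ you must then control. Second, the phrase ``$H_{ij}$ vanishes to the correct order in $|z_i|^2$'' is the whole point: what is needed is the quantitative condition $\bfH_y(\nu_k,\cdot)=0$ together with $d\bfH_y(\nu_k,\nu_k)=2\nu_k$ of Proposition~\ref{propHcondbord} --- the constant $2$ is what makes the cone angle equal to $2\pi$ rather than some $2a\pi$ (compare \eqref{metricSINGULAR} and \eqref{SINGbehavior}) --- and one must check that an arbitrary smooth $f$ preserves it because $\mbox{Hess}(L_k\log L_k)$ dominates $\mbox{Hess}\,f$ near $F_k$. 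That computation is the heart of the proof and is missing. Third, in the reverse direction, recovering $u-u_o\in C^{\infty}(\overline{P})$ by ``integrating twice'' does not by itself give the strict convexity of $u$ on the interiors of the faces required by the definition of $\mS(P,\nu)$ (equivalently, positive definiteness of $\bfH$ restricted to faces, the third bullet of Proposition~\ref{propHcondbord}); this must be read off from nondegeneracy of $g$ along the corresponding toric suborbifolds. None of these is a wrong turn, but as it stands the proposal is an outline of the argument in \cite{H2FII} rather than a proof.
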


The smooth compactification of a metric is a local issue. Even though $(P,\nu)$ might not be rational (for any lattice), a symplectic potential $u\in \mS(P,\nu)$ defines, via  \eqref{ActionAnglemetric}, a K\"ahler metric $g_u$ on $P\times \kt$ which is $\kt$--invariant and thus, for any vertex $p$, defines a K\"ahler metric, still denoted $g_u$, on $P\times T_p$. The boundary condition implies that $g_u$ is the restriction to $P\times T_p$ of a smooth $T_p$--invariant K\"ahler metric on $(M_p,\omega_p,T_p)$. Recall that $(M_p,\quot{\Lambda}{\Lambda_p})$ is an orbifold uniformizing chart near the pre-image of a vertex in the rational case and that smooth orbifold metrics are defined as metrics which may be lifted as smooth metrics on a chart.\\

Apostolov--Calderbank--Gauduchon--T{\o}nnesen-Friedman gave the following alternative description of the boundary condition.
\begin{proposition}\label{propHcondbord}\cite{H2FII} Given a labelled polytope $(P,\nu)$, a strictly convex function $u\in C^{\infty}(P)$ is a symplectic potential of $(P,\nu)$ if and only if, denoting $\bfH =(\mbox{Hess } u)^{-1}$,
\begin{itemize}
  \item $\bfH$ is the restriction to $P$ of a smooth $S^2\mathfrak{t}^*$--valued function on $\overline{P}$,
  \item for every $k=1,\dots, d$, for every $y$ in the interior of the facet $F_k$,
\begin{equation}\label{condCOMPACTIFsasak}
  \bfH_y (\nu_k, \cdot) =0\;\;\;\mbox{ and }\;\;\; d\bfH_y (\nu_k, \nu_k) =2\nu_k,
\end{equation}
\item the restriction of $\bfH$ to the interior of any face $F \subset P$ is a positive definite $S^2(\mathfrak{t}/\mathfrak{t}_F)^*$--valued function.\end{itemize} \end{proposition}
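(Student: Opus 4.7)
The plan is to work locally near the interior of each face and reduce everything to a model computation on the vertex charts $M_p$ of \S\ref{subSectCPT}. The forward direction amounts to verifying the three items for the Guillemin potential $u_o$ and then propagating them through a smooth perturbation; the converse is where the real work lies.

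First I would compute $\mathrm{Hess}(u_o) = \tfrac{1}{2}\sum_{k=1}^d L_k^{-1}\nu_k\otimes\nu_k$ and examine it near a vertex $p$. Simplicity of $P$ gives a basis $\nu_{k_1},\dots,\nu_{k_n}$ of $\kt$ with dual affine coordinates $L_{k_1},\dots,L_{k_n}$ near $p$, in which $\mathrm{Hess}(u_o)$ is diagonal with entries $(2L_{k_i})^{-1}$; hence $\bfH_o := \mathrm{Hess}(u_o)^{-1} = \mathrm{diag}(2L_{k_1},\dots,2L_{k_n})$ in this frame. This is manifestly smooth on $\overline{P}$ and satisfies (2)--(3) at every face meeting $p$; via \eqref{localEQUIV} it is just the flat Euclidean metric on $\bC^n$ pushed through the $(x_i,\theta_i)$ variables. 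A similar decomposition $\kt = \mathrm{span}\{\nu_k \mid k\in I_F\} \oplus W$ handles the interior of any non-vertex face $F$. For a general symplectic potential $u = u_o + f$ with $f$ smooth in a neighborhood of $\overline{P}$, I factor $\mathrm{Hess}(u) = \bfH_o^{-1}\bigl(\mathrm{Id} + \bfH_o\,\mathrm{Hess}(f)\bigr)$; the bracketed factor equals the identity on each facet $F_k$ (because $\bfH_o(\nu_k,\cdot) = 0$ there), so it is invertible in a neighborhood of $\overline{P}$. Inversion yields $\bfH = (\mathrm{Id} + \bfH_o\,\mathrm{Hess}(f))^{-1}\bfH_o$, from which (1) and (2) are inherited from $\bfH_o$ by direct expansion, and (3) follows from strict convexity of $u|_{\mathring{F}}$, which is part of the definition of $\mS(P,\nu)$.

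For the converse, assume $\bfH$ satisfies (1)--(3) and work in the above vertex coordinates. The conditions $\bfH(\nu_k,\cdot) = 0$ and $d\bfH(\nu_k,\nu_k) = 2\nu_k$ on $\mathring{F}_k$ pin down the transverse Taylor expansion of $\bfH$ along each facet, to first order, to match $\bfH_o$ exactly. The main obstacle is to organize these boundary conditions at all $n$ facets meeting at $p$ simultaneously and to verify that, after inversion, the $L_k^{-1}$ singularities of $\mathrm{Hess}(u)$ and $\mathrm{Hess}(u_o)$ cancel---including all mixed off-diagonal contributions. The cleanest route is to transfer to the chart $U_p \subset \bC^n$ of \eqref{localEQUIV}: in the complex coordinates $z_i$, (1)--(3) are the combinatorial translation of $T_p$-invariant smoothness of the metric tensor $g_u$ across the coordinate hyperplanes $\{z_i = 0\}$, which is exactly the ACGT analysis in~\cite{H2FII}. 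Once $\mathrm{Hess}(u-u_o)$ is shown to extend smoothly to $\overline{P}$, two integrations recover $u - u_o$ as a smooth function up to the boundary (the integration constants being absorbable into $\mathrm{Aff}(P,\bR)$), placing $u$ in $\mS(P,\nu)$ as required.
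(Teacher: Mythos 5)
The paper does not prove this proposition at all: it is quoted directly from~\cite{H2FII} (ACGT), so there is no in-paper argument to compare yours with. Judged on its own terms, your proposal has two genuine problems. In the forward direction, the pivotal justification is false: at a point $y\in\mathring{F}_k$ the endomorphism $B=\bfH_o\,\mathrm{Hess}(f)$ of $\kt^*$ is \emph{not} zero, because $\bfH_o$ only annihilates the single direction $\nu_k\in\kt$, so $B=0$ would force $\mathrm{Hess}(f)$ to take values in $\bR\nu_k$. Hence $\mathrm{Id}+B$ does not ``equal the identity on each facet.'' The correct structural fact is that the image of $B$ at $y$ lies in $T_yF_k$ (the annihilator of $\nu_k$), so $\mathrm{Id}+B$ is block upper-triangular with diagonal blocks $\mathrm{Id}$ and $\mathrm{Id}+B|_{T_yF_k}$; its invertibility on $\del P$ then still has to be argued, and the natural input is the strict convexity of $u$ restricted to each face together with the identification of $\bfH_o$ on a face with the inverse Hessian of the induced Guillemin potential. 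With that fact in hand your expansion does recover conditions (1)--(2) (the inheritance of $d\bfH_y(\nu_k,\nu_k)=2\nu_k$ uses exactly that $\mathrm{im}\,B\subseteq\ker\nu_k$, not $B=0$), but as written the step fails. A smaller inaccuracy: near a vertex $\mathrm{Hess}(u_o)$ is not diagonal in the frame $L_{k_1},\dots,L_{k_n}$ --- the facets not through $p$ contribute a smooth positive semidefinite term --- though this is absorbed by the same perturbation scheme.

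The converse, which you yourself flag as ``where the real work lies,'' is not proved: you reduce it to ``exactly the ACGT analysis in~\cite{H2FII},'' i.e.\ to the proposition being established. What is missing concretely is the argument that (1)--(3) force the entries of $\mathrm{Hess}(u)-\mathrm{Hess}(u_o)$ (not merely of their inverses) to extend smoothly to $\overline{P}$; this is where the first-order condition $d\bfH_y(\nu_k,\nu_k)=2\nu_k$ must be used to cancel the $L_k^{-1}$ poles, including the mixed terms you mention, and it is the entire content of the hard direction. Your final step --- recovering $u-u_o\in C^{\infty}$ up to the boundary from smoothness of its Hessian by integrating along segments from an interior base point --- is fine once that is done.
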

%
%

 \subsection{The curvature and the extremal affine function}

Fixing any euclidian volume form $\vol$ on $\kt^*$, Donaldson~\cite{don:scalar} pointed out that the $L^2(P,\vol)$--projection of the scalar curvature $S(u)$, given by \eqref{abreuForm}, on the space of affine linear functions, $\mbox{Aff}(P,\bR)$, is independent of the choice of $u\in\mS(P,\nu)$. The resulting projection $\zeta_{(P,\nu)}\in \mbox{Aff}(P,\bR)$ is the {\it extremal affine function} of $(P,\nu)$. Indeed, integrating~(\ref{abreuForm}) by part, the boundary condition of symplectic potentials gives $$\frac{1}{2}\int_{\Polytope} S(u) x_i\vol =\!\int_{\del \Polytope} x_i\length_{\nu}=: Z_i(\Polytope,\nu)$$ where $\length_{\nu}$, when restricted to any facet $F_k$, is a $(n-1)$--form defined by $\nu_k\wedge \length_{\nu}=-\vol$. 

Choose a basis $(e_1,\dots, e_n)$ of $\mathfrak{t}$ and set $x_0= 1$, $x_1=\langle e_1,\cdot\rangle$, $\dots$, $x_n=\langle e_n,\cdot\rangle $ of $\mbox{Aff}(\Polytope,\bR)$. The extremal affine function of $(\Polytope,\nu)$ is $\zeta_{(\Polytope,\nu)}=\sum_{i=0}^n \zeta_i x_i$ where the vector $\zeta = (\zeta_0,\dots,\zeta_n)\in \bR^{n+1}$ is the unique solution of the linear system:
\begin{equation}\label{systCHPext}
\begin{split}
\sum_{j=0}^n W_{ij}(\Polytope)\, \zeta_j &= 2Z_i(\Polytope,\nu),\;\;\; i= 0,\dots, n\\
\mbox{with }\quad W_{ij}(\Polytope) = \int_{\Polytope} x_ix_j\vol &\quad \mbox{ and }  \quad Z_i(\Polytope,\nu) =\!\int_{\del \Polytope} x_i \length_{\nu}.
 \end{split}
\end{equation}

 \subsection{Complex coordinates}\label{subsectCPLX}
For a K\"ahler structure $(g_u,dx\wedge d\theta,J_u)$ given by \eqref{ActionAnglemetric} on $P\times \kt$, the set $\{J_uX_1,\dots,J_uX_n, X_1,\dots,X_n,\}$ is a frame of real holomorphic commutative vector fields, which gives an identification $\kt\oplus\sqrt{-1}\kt \simeq T_{(x,\theta)}(P\times\kt)$ and (a priori) local holomorphic coordinates $z=t+\sqrt{-1}\theta$ where $dt_i = -d^c\theta_i$.

In the rational case, the complex coordinates $z=t+\sqrt{-1}\theta$ are only local on $\mathring{M}$ and are given by the exponential map, see~\cite{don:Large}. Actually, in this context, for a point $y\in\mathring{M}$, the tangent space $T_y\mathring{M}\simeq\kt\oplus\sqrt{-1}\kt \simeq \bC^n$ is naturally identified with the universal cover of $\mathring{M}$ where the covering map is just the exponential $$\mathring{M}\simeq\quot{\kt\oplus\sqrt{-1}\kt}{2\pi \sqrt{-1} \Lambda}\simeq (\bC^*)^n.$$

As explained in the literature see for e.g.~\cite{CDG,don:Large}, by writing $dx\wedge d\theta$ in the coordinates $z$, we find a K\"ahler potential \begin{equation}\label{Kpot}dx\wedge d\theta= \sum_{i,j=1}^n \frac{\del^2\phi}{\del t_i\del t_j}dt_i\wedge d\theta_j =dd^c\phi. \end{equation} In the rational case, $\phi$ is a globally defined function on $\kt$ via the identification provided by the exponential near a point of $\mathring{M}$. Changing the base point corresponds to translate $\phi$ by an affine-linear function of $\kt$. The correspondence between the symplectic potential $u$ and the K\"ahler potential $\phi$ is done via the Legendre transform: \begin{equation}\label{legTRANS}
   u(x) = \langle x,t\rangle-\phi(t)
 \end{equation} where $t$ is the unique point of $\kt$ such that $d\phi_t=x$ or inversely $x$ is the unique point of $\kt^*$ such that $du_x=t$. The image of the differential of the K\"ahler potential is the (open) polytope $P$ (i.e $P=\im (t\mapsto d\phi_t)$). 

A symplectic potential $u\in\mS(P,\nu)$ provides an identification
$$\Phi_u : P\times\kt \ra \kt\oplus\sqrt{-1}\kt$$
via its differential $du:P\ra \kt$ (which is a diffeomorphism since $u$ is strictly convex on $\mathring{M}$) so the coordinates $z=t+\sqrt{-1}\theta$ are globally defined on $P\times\kt$. In the rational case, this identification fits with the fact that both spaces are identified with the universal cover of $\mathring{M}$.
\begin{rem}The boundary condition on symplectic potentials is equivalent to the common asymptotic behavior of K\"ahler potentials. Using the identification $du:P\stackrel{\sim}{\ra}\kt$ or the inverse $d\phi:\kt\stackrel{\sim}{\ra}P$ one can express the boundary condition on $u$ as asymptotic behavior of the K\"ahler potential $\phi$ (recall that $\frac{\del^2\phi}{\del t_i\del t_j}(t) =(H_{ij}(x))$ whenever $du_x=t$). The Guillemin potential $u_o$ gives $du_o = \frac{1}{2} \sum_{k=1}^d (\log L_k +1)\nu_k.$ In particular, the normals determine the rate of divergence of $du_o$ when $x\ra \partial P$.
\end{rem}

 Distinct symplectic potentials $u, u_o\in\mS(P,\nu)$ lead to distinct K\"ahler structures on $\kt\oplus\sqrt{-1}\kt$ $$\left((\Phi_{u_o}^{-1})^*g_{u_o},\, \omega_o = (\Phi_{u_o}^{-1})^*dx\wedge d\theta\right)\;\;\;\mbox{ and }\;\;\;\left((\Phi_u^{-1})^*g_u,\, \omega = (\Phi_u^{-1})^*dx\wedge d\theta\right)$$ compatible with the same complex structure. Denoting $\phi$ and $\phi_o$ the Legendre transform of $u$ and $u_o$ respectively, we have $\omega-\omega_o = dd^c(\phi-\phi_o)$. Going back on $P\times\kt$, using $\Phi_{u_o}^{-1}$ we get \begin{equation}\label{CPLXpointofview}
  \left(g_{u_o},\, dx\wedge d\theta, J_{u_o}\right)\;\;\;\mbox{ and }\;\;\;\left((\Phi_u^{-1}\circ\Phi_{u_o})^*g_u,\, (\Phi_u^{-1}\circ\Phi_{u_o})^*dx\wedge d\theta, J
_{u_o}\right).
\end{equation} The map $\Phi_u^{-1}\circ\Phi_{u_o}$ is a $\kt$--invariant smooth diffeomorphism of $\overline{P}\times \kt$ fixing the boundary, thanks to the boundary condition on $u$ and $u_o$. In particular, the function $x\mapsto(\phi-\phi_o)(d(u_o)_x)$ is the restriction to $P$ of a smooth function on $\overline{P}$.

\section{Geometry in the non rational case}\label{sectGEOMnonRAT}

\subsection{Norms and integration}\label{subSECTcomment}

\begin{rem}
In what follows, any function $f$, defined on $P$ or on $\overline{P}$, is identified with its pull-back on $\overline{P}\times \kt$ and is also denoted $f$. On suitable subsets, we even identify $f$ with the corresponding $T_p$--invariant function on the chart $(M_p,\omega_p,T_p)$ of a vertex $p$.
\end{rem}

Fixing an orientation on $\kt^*$, a lattice $\Lambda\subset \kt$ naturally provides a volume form, say $\vol_{\Lambda}$, on $\kt^*$ since $\mbox{Gl}(\Lambda)\subset \mbox{Sl}(\kt)$. With the dual volume form $\vol_{\Lambda}^*$ on $\kt$, given by the dual lattice, the volume of the torus $T=\kt/\Lambda$ is $(2\pi)^n$. As noticed in~\cite{guillMET}, given a rational labelled polytope $(P,\nu,\Lambda)$ associated to the compact symplectic orbifold $(M,\omega,T)$ with moment map $x:M\ra \kt^*$, Fubini's Theorem implies that the integration of a $T$--invariant function on $V\subset M$ is the integration of the corresponding function on $x(V)\subset \overline{P}$ times the constant $(2\pi)^n$. Precisely, given an integrable function $f$ on $U\subset P$, assuming $x^{-1}(U)$ is covered by the orbifold uniformizing chart $\left( M_p,\quot{\Lambda}{\Lambda_p},\psi_p\right)$, see \S\ref{subSectCPT}, we have
\begin{equation}\label{intP}\begin{split}
\int_{U}f\vol_{\Lambda}  &= \frac{1}{(2\pi)^n} \int_{x^{-1}(U)} f \frac{\omega^n}{n!} = \frac{1}{|\Lambda/\Lambda_p|} \frac{1}{(2\pi)^n} \int_{\psi_p^{-1}(x^{-1}(U))} f \frac{\omega_p^n}{n!}\\
&= \frac{1}{\int_{T_p}\vol_{\Lambda}^*} \int_{\psi_p^{-1}(x^{-1}(U))} f \frac{\omega_p^n}{n!}.
\end{split}                                                                                                                                                                                                                                                                                                                                                                          
\end{equation}

If we consider only a labelled polytope $(P,\nu)$, there is no preferred lattice but we can arbitrarily choose a volume form $\vol=dx_1\wedge\dots \wedge dx_n$. Formula~\eqref{intP} still holds: for an integrable function $f$ defined on a neighborhood $U$ of a vertex $p$ of $P$, denoting $x:M_p\ra \kt^*$ the moment map of the chart $(M_p,\omega_p,T_p)$,
\begin{equation}\label{intP2}\int_Uf\vol = \frac{1}{c_p}\int_{x^{-1}(U)}f\frac{\omega_p^n}{n!}\end{equation}
where $c_p=\int_{T_p}\vol^* \in \bR_{>0}$ is a constant depending on $p$, on $\nu$ and on $\vol$. The value~\eqref{intP2} does not depend on the vertex $p$.

On the other hand, given $u\in \mS(P,\nu)$, the norm of any derivative $|\nabla^{g_u}\nabla^{g_u}\cdots \nabla^{g_u} \psi|_{g_u}$ is a smooth function on $\overline{P}$ as soon as $\psi$ is a smooth $\kt$--invariant function on $\overline{P}\times \kt$. Indeed, $|\nabla^{g_u}\nabla^{g_u}\cdots \nabla^{g_u} \psi|_{g_u}$ is then a $T_p$--invariant function on $M_p$ for each vertex $p$ of $P$ and thus, a smooth function on the image of the moment map of $(M_p,\omega_p,T_p)$ which is $\cup_{F\in\mF_p(P)}\mathring{F}$, see~\S\ref{subSectCPT}. These smooth continuations (one for each vertex) coincide when overlapping and thus $|\nabla^{g_u}\nabla^{g_u}\cdots \nabla^{g_u} \psi|_{g_u}\in C^{\infty}(\overline{P})$.

The above comments provide a scope for extending standard norms on functional spaces on $\overline{P}$. Namely, given $u\in \mS(P,\nu)$ and $\vol$, we take the pointwise norms of the derivatives on $P\times \kt$ (or on $M_p$ if applicable) with respect to the K\"ahler metric $g_u$ while we integrate over $\overline{P}$ using the volume form $\vol$. Therefore, we define $L^p$--norms, $C^k$--norms, H\"older norms on suitable spaces of functions on $\overline{P}$ giving rise to the definition of $L^p$--space, $C^k$--space and Sobolev spaces. These spaces do not depend on $\vol$ and coincide respectively with their ($T$--invariant) namesake on toric K\"ahler orbifolds in the rational case. Moreover, even when $(P,\nu)$ is non rational, they behave as if they were defined on a K\"ahler compact manifold: Sobolev inequalities, H\"older inequalities, Schauder estimates (for smooth operators on $\overline{P}$), Kondrakov Theorem hold as well, see~\cite{JoyceBOOK}. Because \eqref{intP2} is \eqref{intP} in the rational 
case, a lot of proofs are formally the same.
\begin{rem} The boundary condition on symplectic potentials given by~\cite{H2FII}, recalled in~\S\ref{subSectBC}, implies that $H=(\mbox{Hess } u)^{-1}$ is the restriction of a smooth $S^2\kt^*$--valued function on $\overline{P}$. Thus, as an example of what has been said above, for $f\in C^{\infty}(\overline{P})$, $g_u(\nabla^{g_u}f,\nabla^{g_u}f)= \sum_{i,j=1}^nH_{ij} f_{,i}f_{,j} \in C^{\infty}(\overline{P})$ (with notation~\eqref{ActionAnglemetric}).\end{rem} 

\subsection{Maximum Principle}\label{subSECTcommentLOCAL}
The Laplace operator of $g_u$ when restricted to the space of $\kt$--invariant functions on $P\times \kt$ is \begin{equation} \label{restLAPLACIAN}
\Delta^{u}_| = \sum_{i,j=1}^n H_{ij}\frac{\del^2}{\del x_j\del x_i} + \frac{\del H_{ij}}{\del x_j}\frac{\del}{\del x_i}.\end{equation}
This is a smooth operator on $C^{\infty}(\overline{P})$ satisfying $$\int_P h\Delta^{u} f\vol = \int_P \sum_{i,j=1}^nH_{ij} f_{,i}h_{,j} \vol =: \langle df,dh \rangle_u$$ as ensured by the boundary conditions on $H$. In particular, $\Delta^{u}$ is a symmetric operator on $C^{2}(\overline{P})$ whose kernel consists in constant functions. Moreover, it is elliptic on $P$ (but not uniformly elliptic). Let see why the Maximum Principle holds in this context as well. If $L_1,\dots L_k$ denote the defining affine-linear funtions of $P$, the operator $\Delta^{u}$ is uniformly elliptic on $$P_{\epsilon} =\{x\in \kt^* \,|\, L_k(x)>\epsilon,\, k=1,\dots, d\}$$ for any $\epsilon >0$. The classical Maximum Principle tells us that a function $f \in C^{2}(\overline{P})$ satisfying $-\
\Delta^u f\geq 0$ on $P_\epsilon$ attains a maximum on the boundary of $P_\epsilon$. Passing to the limit, we get that $f$ must attain a maximum at a point, say $p$, of $\del P$ whenever $-\Delta^u f\geq 0$ on $P$. But if $q$ is a vertex of the face in which lies $p$ (or is $p$ itself if $p$ is a vertex) then $f \in C^{2}(M_q)^{T_q}$ and reaches a maximum at a point in $M_q$ while $-\Delta^u f\geq 0$ (as a function on $M_q$, $\Delta^{u}$ is the Laplacian for the K\"ahler metric $g_u$ on $M_q$). Then, $f$ is constant. In sum, we have the following lemma 
\begin{lemma}
 Let $f\in C^2(\overline{P})$ if there exists $u\in \mS(P,\nu)$ such that $\Delta^u f\leq 0$ on $P$ then $f$ is constant.
\end{lemma}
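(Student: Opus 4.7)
The strategy is to combine the strong maximum principle for the uniformly elliptic operator $\Delta^u$ on the interior subdomains $P_\epsilon$ with an extension argument through the smooth compactification $M_q$ at a vertex near the boundary. The crucial input is the boundary condition on $u$ (Proposition~\ref{propHcondbord}), which guarantees that $g_u$ extends to a genuine smooth K\"ahler metric on every vertex chart $M_q$, so that $\Delta^u$ lifts to a bona fide smooth elliptic operator on each such chart. By compactness of $\overline{P}$ and continuity of $f$, the infimum of $f$ over $\overline{P}$ is attained at some point $p\in\overline{P}$; I split into two cases according to whether $p\in P$ or $p\in\del P$.

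Suppose first $p\in P$. By Proposition~\ref{propHcondbord} the tensor $\bfH=(\mbox{Hess }u)^{-1}$ is smooth on $\overline{P}$ and positive definite on the compact set $\overline{P_\epsilon}$ for every $\epsilon>0$ with $p\in P_\epsilon$, so the operator $\Delta^u$ of~\eqref{restLAPLACIAN} is uniformly elliptic with smooth coefficients on $P_\epsilon$. The hypothesis $\Delta^u f\leq 0$ together with Hopf's strong maximum principle then forces $f$ to be constant on the connected set $P_\epsilon$; letting $\epsilon\to 0$ and invoking continuity yields $f\equiv f(p)$ on $\overline{P}$.

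Suppose instead $p\in\del P$, and let $F\in\mF(P)$ be the unique face whose relative interior $\mathring{F}$ contains $p$. Pick any vertex $q$ of $F$; then $F\in\mF_q(P)$ and $p$ lies in $\bigcup_{F'\in\mF_q(P)}\mathring{F'}$, which is precisely the image of the moment map $x:M_q\to\kt^*$ of~\S\ref{subSectCPT}. The $\kt$-invariant extension of $f$ to $\overline{P}\times\kt$ pulls back via $x$ to a $C^2$, $T_q$-invariant function $\tilde f$ on the smooth boundaryless manifold $M_q$. The metric $g_u$ extends smoothly to $M_q$ by Proposition~\ref{propHcondbord}, and its Laplace--Beltrami operator $\Delta^{g_u}$ restricted to $T_q$-invariant functions agrees with $\Delta^u$ over $x^{-1}(P)$, so by continuity $\Delta^{g_u}\tilde f\leq 0$ everywhere on $M_q$. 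Any preimage of $p$ under $x$ is an interior point of $M_q$ where $\tilde f$ attains its minimum, so Hopf's strong maximum principle on the smooth Riemannian manifold $(M_q,g_u)$ forces $\tilde f$ to be constant. Since $\overline{P}\in\mF_q(P)$ has relative interior $P$, the image $x(M_q)$ contains $P$, and $f$ is constant on $P$ and hence on $\overline{P}$.

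The only delicate point is the loss of uniform ellipticity of $\Delta^u$ as $x\to\del P$, which prevents a direct global application of the maximum principle on $\overline{P}$. This is precisely what the chart $M_q$ remedies: the apparent degeneracy of $\bfH$ at boundary faces is a coordinate artifact of using action--angle coordinates, and disappears once one passes to the smooth complex coordinates near $T_q$-fixed points provided by~\eqref{localEQUIV}, on which $g_u$ is smooth and non-degenerate.
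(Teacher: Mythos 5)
Your proof is correct and follows essentially the same route as the paper: the classical maximum principle on the uniformly elliptic subdomains $P_\epsilon$, followed by lifting $f$ to the vertex chart $M_q$ where $g_u$ is a smooth K\"ahler metric and applying the strong maximum principle on that boundaryless manifold. The only cosmetic difference is your explicit two-case split (interior versus boundary extremum); the paper instead first pushes the extremum to $\del P$ before invoking the chart, which amounts to the same argument.
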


\begin{rem}
 Of course the same principle holds for any symetric operator which is elliptic on $P$ and corresponds to an elliptic operator on each chart $M_p$. 
\end{rem}

\begin{rem}\label{selfadjoint} Whenever $(P,\nu)$ is rationnal and to $u\in \mS(P,\nu) $ is associated to a toric compact K\"ahler manifold $(M,\omega, g_u T)$, $\Delta^{u}$ is self-adjoint on the Hilbert space where it is defined, that is $\mathcal{W}^{1,1}(M)$. Now the subset $\mathcal{W}^{1,1}(M)^T$ of $T$--invariant functions is a closed Hilbert subspace and thus, the restriction of $\Delta^{u}$ is still self-adjoint. This means that the operator $\Delta^{u}_|$, as written in~\eqref{restLAPLACIAN}, is self-adjoint on $\mathcal{W}^{1,1}(\overline{P})$. There is no reason for that to fail in the non rational case.
\end{rem}
\subsection{A group of cohomology}

A classical approach adopted in K\"ahler geometry of compact manifolds, is to fix a complex structure $J$ on a compact manifold $M$ and a K\"ahler class $\Omega\in H^{1,1}(M,\bR)$ and study the space of compatible K\"ahler structures $(g,\omega,J)$ with $\omega\in\Omega$. (This is equivalent to fix $\omega$ instead, by Moser's Theorem.) This approach makes sense in our setting as well even though the cohomology of $P\times\kt$ is trivial because of the following important fact, explained in~\cite{H2FII}, due to the combination of a result of Schwarz~\cite{schwarz} and the Slice theorem. 

\begin{lemma}\label{lemSchwarz} Let $(M,\omega,g,J,T)$ be a compact toric K\"ahler orbifold with moment polytope $P$. Then $C^{\infty}(M)= C^{\infty}(\overline{P})$.
\end{lemma}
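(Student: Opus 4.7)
The identification is via the moment map $x:M\to\overline{P}$: pull-back gives an obvious map $x^*:C^\infty(\overline{P})\to C^\infty(M)^T$ (the statement should be read modulo identifying a $T$-invariant smooth function on $M$ with its push-down to the orbit space $\overline{P}$). This map is clearly injective since $x$ is surjective. The plan is to prove surjectivity by a local argument on each uniformizing chart built in~\S\ref{subSectCPT}, combined with a partition of unity subordinate to the open cover $\{x^{-1}(\bigcup_{F\in\mF_p(P)}\mathring{F})\}_{p}$ indexed by vertices.

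On the dense open set $\mathring{M}=x^{-1}(P)$ the restriction of $x$ is the projection $P\times T\to P$, so the statement is trivial there: every $T$-invariant smooth function on $P\times T$ descends to a smooth function on $P$, and conversely. Hence the only issue is smoothness at boundary points of $\overline{P}$. Fix a vertex $p$ and work in the local chart $\phi_p:U_p\subset\bC^n\to M_p$ of \eqref{localEQUIV}. In these complex coordinates the moment map reads
\begin{equation*}
x(z)=p+\tfrac{1}{2}\sum_{i=1}^n|z_i|^2\,\nu_{k_i}^*,
\end{equation*}
i.e.\ the components of $x-p$ in the basis dual to $(\nu_{k_i})$ are the squared moduli $\tfrac{1}{2}|z_i|^2$. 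A $T_p$-invariant smooth function on $M_p$ pulls back under $\phi_p$ to a $\bT^n$-invariant smooth function on $U_p$; by Schwarz's theorem on smooth invariants of a compact Lie group action (or, more elementary here, the classical lemma identifying $\bT^n$-invariant smooth functions on $\bC^n$ with smooth functions of $|z_1|^2,\dots,|z_n|^2$) such a function is of the form $F(|z_1|^2,\dots,|z_n|^2)$ for some $F\in C^\infty(\bR_{\geq 0}^n)$. Since the map $z\mapsto(\tfrac{1}{2}|z_1|^2,\dots,\tfrac{1}{2}|z_n|^2)$ factors through $x$ and produces smooth coordinates in a neighbourhood of $p$ in $\overline{P}$, the invariant function descends to a smooth function on $\overline{P}$ near $p$. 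For a boundary point lying in the relative interior of a face $F$ which is not a vertex, one applies the same argument to a vertex $p\in F$: the equivalence relation defining $M_p$ only quotients out the angular variables $\theta_i$ with $i\in I_F$, and the remaining $|z_i|^2$'s still correspond to smooth transverse coordinates on $\overline{P}$; alternatively, one invokes the Slice theorem to reduce to the local model $\bC^k\times(\bC^*)^{n-k}$ with the standard $\bT^k$-action on the first factor, where the same Schwarz-type statement applies.

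The main technical point is thus the elementary fact that $\bT^n$-invariant germs of smooth functions at the origin of $\bC^n$ are exactly the smooth functions of the elementary invariants $|z_i|^2$, which is the statement of Schwarz's theorem in this very simple setting. Once this is established chart by chart, a standard gluing argument using a smooth $T$-invariant partition of unity on $M$ (which descends to a smooth partition of unity on $\overline{P}$ by the same local principle) finishes the proof, and one checks that the gluing does not introduce new singularities at the overlaps since the coordinate changes between the different $M_p$-charts are smooth and $T$-equivariant on the overlaps, hence preserve the space of invariant smooth functions. The upshot is the bijection $C^\infty(\overline{P})\cong C^\infty(M)^T$, and in particular every smooth function on the moment polytope (smooth up to and including the boundary in the ordinary Euclidean sense) arises as the push-down of a genuinely smooth $T$-invariant function on the orbifold~$M$.
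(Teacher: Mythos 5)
Your proof is correct and takes essentially the same route as the paper: the paper gives no details itself, attributing the lemma to the combination of Schwarz's theorem on smooth invariants and the Slice theorem as explained in \cite{H2FII}, and that is precisely the chart-by-chart argument you spell out ($\bT^n$-invariant smooth germs on $\bC^n$ are smooth functions of the $|z_i|^2$, which via \eqref{localEQUIV} are affine coordinates on $\kt^*$ centered at the vertex). Your reading of the statement as the identification $C^{\infty}(M)^T=C^{\infty}(\overline{P})$ is the intended one, and the partition-of-unity gluing, while harmless, is not even needed since smoothness is local and the locally defined push-downs all agree.
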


As a consequence of Lemma~\ref{lemSchwarz}, we get a way to define the $(1,1)$ group of cohomology.  
\begin{lemma}\label{cohomology} Let $(M,\omega,g,J,T)$ be a compact toric K\"ahler orbifold. Two real closed $(1,1)$--forms $\beta,\beta'$ on $M$ corresponding respectively to potentials $f$, $h\in C^{\infty}(P)$ (i.e $\beta=dd^cf$, $ \beta'=dd^cf'$ on $\mathring{M}$) are cohomologous if and only if $f-h \in C^{\infty}(\overline{P})$.
\end{lemma}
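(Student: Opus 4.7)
My plan is to prove the two implications separately, using Lemma~\ref{lemSchwarz} in both directions and the local structure of complex coordinates recalled in \S\ref{subsectCPLX} for the harder direction. Throughout I will exploit that the pulled-back potentials $f,h$ are automatically $T$-invariant on $\mathring M$, so $\beta,\beta'$ extend as $T$-invariant smooth $(1,1)$-forms on $M$.

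For the ``if'' direction, assume $f-h\in C^\infty(\overline{P})$. By Lemma~\ref{lemSchwarz} it corresponds to a smooth $T$-invariant function $\psi$ on $M$. The two smooth $(1,1)$-forms $\beta-\beta'$ and $dd^c\psi$ agree on the open dense subset $\mathring M$ (both equal $dd^c(f-h)$ there), so by continuity they coincide on all of $M$. Hence $[\beta]=[\beta']$ in $H^{1,1}(M,\bR)$.

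For the ``only if'' direction, suppose $\beta-\beta'=dd^c\psi$ for some $\psi\in C^\infty(M)$. Since $\beta$ and $\beta'$ are $T$-invariant, $T$-averaging $\psi$ over the compact torus produces a $T$-invariant $\tilde\psi\in C^\infty(M)^T$ with $dd^c\tilde\psi=\beta-\beta'$, and by Lemma~\ref{lemSchwarz} this $\tilde\psi$ corresponds to a smooth function on $\overline{P}$. On $\mathring M$ the difference $F:=f-h-\tilde\psi$ is a $T$-invariant function satisfying $dd^c F=0$, i.e.\ a $T$-invariant pluriharmonic function. Passing to the universal cover $\kt\oplus\sqrt{-1}\kt$ and using the complex coordinates $z=t+\sqrt{-1}\theta$ of \S\ref{subsectCPLX}, the $T$-action is translation in $\theta$, so $F$ depends only on $t$; the condition $dd^cF=0$ then says that the real Hessian of $F$ in $t$ vanishes, forcing $F=\alpha(t)+c$ for some $\alpha\in\kt^*$ and $c\in\bR$. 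Since $dd^c(\alpha(t)+c)=0$, this pluriharmonic piece contributes nothing to $\beta$: replacing the potential $f$ by $f-\alpha(du_{\cdot})-c$ (which is still a potential of $\beta$) leaves $f-h=\tilde\psi\in C^\infty(\overline{P})$.

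The main technical point is the identification of $T$-invariant pluriharmonic functions on $\mathring M$ with functions affine in the complex coordinate $t$; this relies on the global complex coordinates provided by a symplectic potential of the fixed K\"ahler structure, together with the fact that the normals $\nu_k$ span $\kt$ (which makes the pluriharmonic freedom exactly match the logarithmic boundary behaviour of $du_x$). Once this is in hand, the remainder of the argument is a formal application of Lemma~\ref{lemSchwarz} combined with the standard $T$-averaging procedure, and the ambiguity in the choice of potential modulo pluriharmonic functions is absorbed to yield the desired smoothness on $\overline{P}$.
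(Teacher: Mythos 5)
The paper offers no written proof of this lemma (it is presented as an immediate consequence of Lemma~\ref{lemSchwarz}), so there is nothing to compare line by line; your two-implication layout is the natural way to supply one. The ``if'' direction is correct: $f-h\in C^{\infty}(\overline{P})$ gives a global smooth $T$--invariant $\psi$ by Lemma~\ref{lemSchwarz}, and $\beta-\beta'=dd^c\psi$ on the dense set $\mathring{M}$, hence everywhere. The ``only if'' direction is also correctly structured up to its last sentence: the global $dd^c$--lemma (valid on compact K\"ahler orbifolds), $T$--averaging, Lemma~\ref{lemSchwarz}, and your identification of $T$--invariant pluriharmonic functions on $\mathring{M}$ with functions affine in $t$ are all fine.

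The gap is the final ``absorption''. What your argument actually establishes is $f-h=\tilde\psi+\alpha(du_{\cdot})+c$ for some $\alpha\in\kt$, $c\in\bR$; to reach the stated conclusion you must show $\alpha=0$, and replacing $f$ by $f-\alpha(du_{\cdot})-c$ is not permitted, since $f$ and $h$ are the data of the statement. Nor can $\alpha=0$ be forced: $\alpha(du_x)$ blows up like $\tfrac12\alpha(\nu_k)\log L_k$ near the facet $F_k$, so it does not lie in $C^{\infty}(\overline{P})$ unless $\alpha=0$, yet $dd^c(\alpha(du_{\cdot}))=0$. Concretely, on $\bcp^1$ with $P=(0,2)$ and $u=\tfrac12\bigl(x\log x+(2-x)\log(2-x)\bigr)$, take $\beta=\beta'=0$, $h=0$ and $f=du_x=\tfrac12\log\tfrac{x}{2-x}$: the two forms are trivially cohomologous but $f-h\notin C^{\infty}([0,2])$. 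So the ``only if'' implication is false as literally stated, and the correct conclusion --- which your proof does prove --- is that $f-h$ lies in $C^{\infty}(\overline{P})$ only modulo the pluriharmonic functions $\alpha(du_{\cdot})+c$. This is in fact the form in which the lemma is used later (cf.\ Remark~\ref{remMONOTONE}, where the affine freedom matches the choice of preferred point). You should either add this correction term to the conclusion or impose a normalisation on the potentials that eliminates it, rather than silently modifying $f$ at the end.
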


For a given symplectic potential $u\in \mS(P,\nu)$, the potential of the Ricci form associated to $g_u$ has been computed in~\cite{BurnsGuil}, to be
\begin{equation}\label{RicciPotential}
R_u(x)= \frac{1}{2} \mathrm{log }\,\mathrm{det }\,(\mathrm{Hess }\,u)_{x}. 
\end{equation}

\begin{rem}\label{remMONOTONE}Thus, using Lemma~\ref{cohomology}, $(M,\omega)$ is monotone with constant $\lambda>0$ if and only if for any symplectic potential $u\in\mS(P,\nu)$, $R_u-\lambda \tilde{\phi}\in C^{\infty}(\overline{P})$ where $\tilde{\phi}(x)=\phi((du_o)_x)$. This condition makes sense in the non rational case as well and is equivalent to the fact that $(\Polytope,\nu)$ is monotone in the sense of Definition~\ref{defMONOTONE}, see~\cite{don:Large}.\end{rem}

\begin{lemma}\label{LEMconeMONOTONE} Let $\Polytope$ be a polytope of dimension $n$, there is a $(n+1)$--dimensional cone of normals $\nu$ for which $(\Polytope,\nu)$ is monotone. Moreover, this cone is parameterized by $\bR_{>0}\times P$ via the map $$(\lambda,p)\mapsto\left\{\frac{\lambda \nu_1}{L_1(p)},\dots, \frac{\lambda \nu_d}{L_d(p)}\right\}$$
where $\nu$ is any given set of normals for $\Polytope$ with defining function $L_1,\dots, L_d$.\end{lemma}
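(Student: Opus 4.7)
The plan is to parameterize all sets of inward normals of $\Polytope$ (sharing the same directions as a fixed reference set $\nu = \{\nu_1,\dots,\nu_d\}$ with defining functions $L_1,\dots,L_d$) for which the resulting labelled polytope is monotone. Any such set takes the form $\nu' = \{a_1\nu_1,\dots,a_d\nu_d\}$ with $a_k > 0$, and the new defining functions are simply $L_k' = a_k L_k$.

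First I would translate Definition~\ref{defMONOTONE}: $(\Polytope,\nu')$ is monotone iff there exist $p\in \Polytope$ and $\lambda > 0$ with $a_k L_k(p) = \lambda$ for every $k$, equivalently $a_k = \lambda/L_k(p)$. This immediately shows that the map
\[
\Psi:\bR_{>0}\times \Polytope \longrightarrow \bR_{>0}^d,\qquad (\lambda,p)\longmapsto \bigl(\lambda/L_1(p),\dots,\lambda/L_d(p)\bigr)
\]
surjects onto the set of rescaling vectors giving monotone labellings (with $\nu'_k = \lambda\nu_k/L_k(p)$) and that the image is visibly a cone, since scaling $\lambda$ scales every $\nu'_k$ by the same factor. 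It remains to check that $\Psi$ is injective, which will force the image to be an $(n+1)$-dimensional cone as claimed.

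The key algebraic input for injectivity is that $L_1,\dots,L_d$ span $\mathrm{Aff}(\Polytope,\bR)$. This rests on two standard facts for a bounded simple polytope: (i) the normals $\nu_k = dL_k$ linearly span $\kt$ (inward normals of a bounded polytope in fact positively span $\kt$), and (ii) there exist coefficients $c_k > 0$ with $\sum_k c_k\nu_k = 0$, so that $\sum_k c_k L_k$ is a constant function, and this constant is strictly positive since any interior point $p\in\Polytope$ has $L_k(p)>0$; after normalizing we obtain $1 = \sum_k c_k L_k$.

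Suppose then that $\Psi(\lambda,p) = \Psi(\lambda',p')$, equivalently $L_k(p) = c\, L_k(p')$ for all $k$, where $c = \lambda/\lambda'$. Applying $\sum_k c_k L_k \equiv 1$ at $p$ and at $p'$ gives $1 = c\cdot 1$, hence $\lambda = \lambda'$. Then $L_k(p) = L_k(p')$ for every $k$; since the differentials $\nu_k = dL_k$ span $\kt$, this forces $p = p'$, finishing injectivity. The only possible obstacle is the auxiliary spanning fact for the $L_k$, but this is a direct consequence of $\Polytope$ being bounded.
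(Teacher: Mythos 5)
Your proof is correct. The paper states Lemma~\ref{LEMconeMONOTONE} without proof, so there is no argument to compare against; your route --- reducing monotonicity of $\{a_k\nu_k\}$ to $a_k=\lambda/L_k(p)$ and then proving injectivity of the parameterization using a positive relation $\sum_k c_k\nu_k=0$ (which forces $\sum_k c_kL_k$ to be a positive constant) together with the fact that the $\nu_k$ span $\kt$ --- is the natural one and is complete. As a minor remark, the positive relation you invoke is also an immediate consequence of the paper's Lemma~\ref{lemSIMPLEX} applied to $f\equiv 1$, namely $\sum_k\bigl(\int_{F_k}\length_{\nu}\bigr)\nu_k=0$.
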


\subsection{Compactifiable forms} In the rational case, the $1$--forms $dx_1,\dots, dx_n$ are well-defined on the compact orbifold $M$ and thus a $k$--form $\psi \in\Omega^k(\overline P)$ is pulled-back to give a {\it basic} $k$--form on $M$. Here basic should be undetstood has $\kt$--basic, meaning that the contraction of $\psi$ and $d\psi$ by any element $X_v$ with $v\in \kt$ vanish identically. However, not every $\kt$--invariant $k$--form on $\overline{P}\times \kt$ corresponds to a form that is the restriction of a smooth form on $M$. This is the case for $\psi\in \Omega^k(\overline{P}\times \kt)$ if, for any $k\in\{1,\dots, d\}$, near $\mathring{F}_k$ the contraction of $\psi$ with $\frac{1}{L_k}X_{\nu_k}$ is smooth on $P\cup \mathring{F}_k$. To see this, we consider a chart $M_p$ for a vertex $p\in F_k$ and observe that, in polar coordinates, $|z_i|^2d\theta_i$ is smooth. We call these forms, those who behave as if they were defined on a compact orbifold, {\it compactifiable forms}. Let $\psi$ be such 
a $\kt$--invariant $(2n-1)$--form. Thanks to invariance $d\psi = \sum_{i=1}^n \frac{\del}{\del x_i} \psi_i dx\wedge d\theta$ and $$\int_{P} (d\psi)_{|_P} = \int_{P} d  \left(\sum_{i=1}^n (-1)^{i+1} \psi_i dx_1\wedge\dots \wedge \widehat{dx_i} \wedge dx_n\right) = \int_{\partial P} \widehat{\psi}$$ where $\widehat{\psi} = \sum_{i=1}^n (-1)^{i+1} \psi_i dx_1\wedge\dots \wedge \widehat{dx_i} \wedge dx_n$. But the condition that the contraction of $\psi$ with $\frac{1}{L_k}X_{\nu_k}$ is smooth on $P\cup \mathring{F}_k$ implies that $\widehat{\psi}$ vanishes on $\partial P$. 


\section{Some classical theorems}\label{secKRSorbi}
\subsection{Yau's Theorem}

\begin{theorem}[\cite{calabiConj,Y}] Given a compact complex manifold $(M,J)$ of K\"ahler type and a K\"ahler class $\Omega$, for each $(1,1)$--form $\rho\in 2\pi c_1(M)$ there exists a unique K\"ahler form $\omega\in \Omega$ such that $\rho$ is the Ricci form of the K\"ahler structure $(\omega, J)$. 
\end{theorem}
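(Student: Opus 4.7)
The plan is to reduce the existence statement to a complex Monge--Amp\`ere equation and solve it by the continuity method. Fix a reference K\"ahler form $\omega_0 \in \Omega$ with Ricci form $\rho_0$. Since $\rho - \rho_0$ is an exact real $(1,1)$-form on $M$, the $\partial\bar\partial$-lemma produces $F \in C^{\infty}(M,\bR)$ with $\rho - \rho_0 = -\tfrac{1}{2} dd^c F$. Normalize $F$ by adding a constant so that $\int_M (e^F - 1)\, \omega_0^n = 0$. Any K\"ahler form $\omega \in \Omega$ can be written as $\omega = \omega_0 + dd^c \phi$ for some smooth $\phi$, unique up to constant, and one checks that $\mathrm{Ric}(\omega) = \rho$ is equivalent to the Monge--Amp\`ere equation
\begin{equation}\label{MAyau}
 (\omega_0 + dd^c \phi)^n = e^{F}\, \omega_0^n, \qquad \omega_0 + dd^c \phi > 0.
\end{equation}

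I would then solve \eqref{MAyau} by the continuity method. Consider the family $(\omega_0 + dd^c \phi_t)^n = e^{tF + c_t}\omega_0^n$ for $t \in [0,1]$, where $c_t$ is the unique constant making the right-hand side have total integral $\int \omega_0^n$. Let $S \subset [0,1]$ be the set of parameters for which a smooth solution $\phi_t$ exists. Then $0 \in S$ (take $\phi_0 = 0$), and $S$ is open by the implicit function theorem in H\"older spaces: the linearization at a solution is $\tfrac{1}{2}\Delta_{\omega_t}$ acting on functions of mean zero, which is an isomorphism between appropriate orthogonal complements to constants in $C^{k+2,\alpha}$ and $C^{k,\alpha}$. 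Uniqueness at each step (hence for the final theorem) follows from the maximum principle: if $\phi_1, \phi_2$ both solve \eqref{MAyau} and $\phi = \phi_1 - \phi_2$, then expanding the difference of determinants yields an elliptic equation $\langle A(\phi_1,\phi_2), dd^c \phi \rangle = 0$ with positive-definite coefficient matrix, and integration by parts against $\phi$ forces $\phi$ to be constant.

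The main obstacle, as always in this circle of ideas, is closedness of $S$, i.e.\ uniform a priori estimates on $\phi_t$ in $C^{k,\alpha}$ independent of $t$. The standard hierarchy runs as follows. A $C^0$ bound is obtained by Moser iteration on $|\phi|^p \phi$ using that $dd^c \phi \wedge \omega_0^{n-1}$ controls a coercive Dirichlet energy; the resulting estimate is sometimes called the Yau/Aubin $L^\infty$ estimate and relies only on the Sobolev constant of $\omega_0$ and $\|F\|_{L^\infty}$. The $C^2$ estimate is the hardest classical step: one computes $\Delta_\omega \log \mathrm{tr}_{\omega_0} \omega$ and, using the Monge--Amp\`ere equation to rewrite the Ricci curvature of $\omega$, obtains an inequality of the form $\Delta_\omega \log \mathrm{tr}_{\omega_0} \omega \geq -C\,\mathrm{tr}_{\omega_0}\omega - C'$, from which the maximum principle applied to $\log \mathrm{tr}_{\omega_0}\omega - A\phi$ produces a uniform bound on the trace, hence on the Laplacian, hence on the second derivatives. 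The $C^3$ estimate is Calabi's third-derivative identity: control a carefully chosen cubic-in-derivatives quantity by $\Delta_\omega$ and the $C^2$ bound. Once $\phi_t$ is bounded in $C^{2,\alpha}$ uniformly, the equation $\log \det(g_{0,i\bar j} + \phi_{,i\bar j}) - \log \det g_{0,i\bar j} = tF + c_t$ becomes a uniformly elliptic quasilinear equation with H\"older coefficients, and Schauder theory bootstraps to uniform $C^{k,\alpha}$ bounds for every $k$.

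Combining openness, closedness, and $0 \in S$ yields $1 \in S$, giving a smooth solution of \eqref{MAyau}, hence a K\"ahler form $\omega \in \Omega$ with $\mathrm{Ric}(\omega) = \rho$. In the toric non-rational setting in which this theorem will be reused later in the paper, I would transpose the argument word-for-word by working with symplectic potentials $u \in \mS(P,\nu)$ and H\"older spaces on $\overline P$: the boundary condition of Proposition~\ref{propHcondbord} ensures that $\Delta^u$ defined by \eqref{restLAPLACIAN} behaves as a smooth elliptic operator on $\overline P$ with the same self-adjointness, maximum principle and Schauder theory as in the compact case (cf.\ \S\ref{subSECTcomment}--\S\ref{subSECTcommentLOCAL}), so each ingredient above transports with the integration convention \eqref{intP2} playing the role of integration over $M$.
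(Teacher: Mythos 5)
Your proposal is correct and follows essentially the same route as the paper: reduction to the complex Monge--Amp\`ere equation, the continuity method with openness via the Laplacian, closedness via Yau's $C^0$--$C^2$--$C^3$ estimates and Schauder bootstrapping, and the observation that every ingredient transports to the non-rational labelled-polytope setting using the compactness of $\overline{P}$ and the boundary conditions on symplectic potentials. The only cosmetic difference is in the uniqueness step, where the paper invokes the arithmetic--geometric mean inequality to get $\Delta^{g_o}\psi\leq n(1-e^{F/n})$ and then the maximum principle, while you use the integration-by-parts argument against the positive-definite coefficient matrix; both are standard and valid here.
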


Since the Ricci form only depends on the volume form, Yau's Theorem reads as: given a smooth function $F\in C^{\infty}(M)$, satisfying $\int_M e^F\omega_o^n =\int_M\omega_o^n$ there exists a unique $\psi \in C^{\infty}(M)$ such that $\int_M\psi \,\omega^n=0$, $\omega_o +dd^c \psi >0$ and  
 \begin{equation}\label{MAY}
   (\omega_o +dd^c \psi)^n =e^F\omega_o^n.
 \end{equation} This version furnishes a pde. In \cite{calabiConj}, Calabi showed the uniqueness of solution of this pde and suggested a continuity method to prove the existence. He proved the openness of the set of solutions of the Monge--Amp\`ere equation~\eqref{MAY} while Yau~\cite{Y} produced a priori estimates. Good references for this proof are also \cite{JoyceBOOK, ast58}.\\
 
 Let $(P,\nu)$ be a labelled polytope with $P\subset \kt^*$. We fix a symplectic potential $u_o\in\mS(P,\nu)$, thus the K\"ahler structure $(g_o,\omega_o, J)$ on $\kt\oplus i\kt$ ($J$ denote the endomorphism of the tangent bundle induced by $i$), we denote $\rho_o$ its Ricci form and $\phi_o$ the Legendre transform of $u_o$. Thanks to Lemma~\ref{cohomology} and~\eqref{RicciPotential}, Yau's Theorem reads in our setting as: 
\begin{theorem} Given $R\in C^{\infty} (P)$ such that $ R- R_{u_o} \in C^{\infty}(\overline{P})$, there exists $u\in \mS(P,\nu)$ such that $R=\frac{1}{2} \mathrm{log }\,\mathrm{det }\,(\mathrm{Hess }\,u)_{x}$ and this solution is unique up to addition by an affine-linear function.\end{theorem}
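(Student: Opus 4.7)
The plan is to replicate Yau's classical continuity-method proof of the Calabi--Yau Theorem, reinterpreted in the symplectic potential formalism and relying on the analytic toolkit on $\overline{P}$ developed in \S\ref{sectGEOMnonRAT}. Set $F := 2(R - R_{u_o}) \in C^{\infty}(\overline{P})$. Via the Legendre duality of \S\ref{subsectCPLX}, the prescribed-Ricci equation $R = \tfrac12 \log\det(\mathrm{Hess}\, u)$ is equivalent to the Monge--Amp\`ere equation $(\omega_o + dd^c\psi)^n = e^{F+c}\omega_o^n$ on $\kt\oplus\sqrt{-1}\kt$, where $\psi = \phi - \phi_o$ corresponds to a function in $C^{\infty}(\overline{P})$ and the normalizing constant $c\in\bR$ is chosen so that $\int_{\overline{P}} e^{F+c}\vol = \int_{\overline{P}}\vol$ (absorbable by adding a linear function to $u$). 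For \emph{uniqueness}, if $u_1, u_2 \in \mS(P,\nu)$ both solve the equation and $\omega_i = \omega_o + dd^c\psi_i$ are the associated K\"ahler forms, then $\omega_1^n = \omega_2^n$, and the standard telescoping identity together with integration by parts on $\overline{P}$ (boundary terms vanish because the relevant $(2n-1)$--forms are compactifiable in the sense of \S\ref{sectGEOMnonRAT}) gives
\begin{equation*}
0 \;=\; \int_{\overline{P}}(\psi_1-\psi_2)(\omega_1^n - \omega_2^n) \;=\; -\sum_{k=0}^{n-1}\int_{\overline{P}} d(\psi_1-\psi_2)\wedge d^c(\psi_1-\psi_2)\wedge \omega_1^k\wedge\omega_2^{n-1-k}.
\end{equation*}
Each summand is non-negative, forcing $d(\psi_1 - \psi_2) = 0$, hence $u_1 - u_2 \in \mathrm{Aff}(P,\bR)$.

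For existence, I would run the continuity family $(\mathrm{MA})_t : (\omega_o + dd^c\psi_t)^n = e^{tF + c_t}\omega_o^n$, $t \in [0,1]$, with $c_t\in\bR$ chosen to match total integrals, and let $S \subset [0,1]$ be the set of $t$ for which a solution $\psi_t$ exists corresponding to some $u_t \in \mS(P,\nu)$. Then $0 \in S$ with $\psi_0 = 0$, $u_0 = u_o$. Openness of $S$ follows from the implicit function theorem: the linearization of the Monge--Amp\`ere operator at $\psi_t$ is the K\"ahler Laplacian $\Delta^{u_t}$, whose restriction to $\kt$--invariant functions is the operator of \S\ref{subSECTcommentLOCAL}; by Remark~\ref{selfadjoint} and the maximum principle there, it is a self-adjoint elliptic operator on $\overline{P}$ with kernel consisting of constants, hence an isomorphism $C^{k+2,\alpha}(\overline{P})/\bR \to C^{k,\alpha}(\overline{P})_{0}$ (mean-zero subspace). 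Small perturbations of $u_o$ remain symplectic potentials thanks to Proposition~\ref{propHcondbord}.

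The main obstacle is \emph{closedness}: uniform a priori $C^{k,\alpha}(\overline{P})$ estimates on $\psi_t$ for $t \in S$. I would follow Yau's three-step strategy, with each ingredient made available by \S\ref{sectGEOMnonRAT}. The $C^0$-estimate comes from Moser iteration using the Sobolev/Kondrakov theory on $\overline{P}$ and integration-by-parts with vanishing boundary terms for compactifiable forms. The $C^2$-estimate is obtained by applying the maximum principle of \S\ref{subSECTcommentLOCAL} to Yau's auxiliary function $e^{-\lambda\psi_t}(n + \Delta^{u_o}\psi_t)$; this is a purely local computation that can be carried out on each vertex chart $(M_p,\omega_p,T_p)$, where $g_{u_o}$ is a smooth K\"ahler metric, and the resulting bound $n + \Delta^{u_o}\psi_t \leq C$ patches consistently to a bound on $\overline{P}$. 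Higher-order estimates follow from the Evans--Krylov theorem and Schauder estimates, both valid on $\overline{P}$ by \S\ref{subSECTcomment}. Kondrakov compactness then extracts a convergent subsequence, proving $S$ closed, hence $S=[0,1]$, and $u:=u_1$ solves the equation. The hardest individual piece is the $C^0$-bound, where the boundary-term vanishing must be checked carefully via the compactifiable-form calculus; the $C^2$-argument, once restricted to the smooth charts $M_p$, reduces essentially verbatim to Yau's original proof.
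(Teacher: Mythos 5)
Your proposal is correct and follows essentially the same route as the paper: recast the equation as a complex Monge--Amp\`ere equation via Legendre duality, run Yau's continuity method, get openness from the invertibility of the restricted Laplacian on mean-zero functions, and obtain closedness from Yau's estimates, which go through because the test functions live on the compact set $\overline{P}$ and the maximum principle at a boundary point is rescued by passing to the vertex charts $(M_p,\omega_p,T_p)$, exactly as the paper does (it spells this out for the $C^2$ bound in the proof of Lemma~\ref{uptoC3estimates}). The one genuine divergence is the uniqueness step: you use Calabi's energy identity $\int\chi(\omega_1^n-\omega_2^n)=-\sum_k\int d\chi\wedge d^c\chi\wedge\omega_1^k\wedge\omega_2^{n-1-k}$, which requires verifying via the compactifiable-form calculus of \S\ref{sectGEOMnonRAT} that the boundary terms vanish (they do, since $\iota_{X_{\nu_k}}d^c\chi=\bfH(\nu_k,d\chi)$ vanishes to first order along $F_k$ by Proposition~\ref{propHcondbord}), whereas the paper instead applies the arithmetic--geometric mean inequality to get $\Delta^{g_1}(\psi_2-\psi_1)\leq 0$ and invokes the maximum principle of \S\ref{subSECTcommentLOCAL}; the paper's version avoids any integration by parts, while yours generalizes more readily to situations where a pointwise differential inequality is unavailable. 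Both are valid here.
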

 
 With the discussion of Section~\ref{sectGEOMnonRAT}, we should be convince that the whole proof of Calabi and Yau holds in this setting as well, but let see some details. First, we pass to the complex side of the picture in order to rely on the existing litterature on this topic.
 
%
  
 The path of equations considered is the one obtained by taking $e^{F_t} = \frac{e^{tF}}{\frac{1}{V_{\Omega}}\int_M e^{tF}\vol}$, where $V_{\Omega}= \int_M\vol$. The set $S$ of $t\in [0,1]$ such that there exists a solution of equation~\eqref{MAY} with $F=F_t$ is non empty, since at $t=0$ $\psi\equiv 0$ is a solution.    
 
The unicity of solutions is a consequence of the Maximum Principle. More precisely, we copy the explanation of~\cite{pg}, writting $\Psi$ the Hermitian endomorphism of the tangent defined by $\omega(X, \Psi(Y))= dd^c\psi(X,Y)$ and $\lambda_1,\dots, \lambda_n$ its real eigenvalues. Equation~\eqref{MAY} is $$\prod_{i=1}^n(1+\lambda_i) =1+\sigma_1(\lambda_1,\dots, \lambda_n)+ \dots +\sigma_n(\lambda_1,\dots, \lambda_n)= e^F$$ where $\sigma_i$ is the $i$-th symmetric elementary functions. In particular,  $\omega_o +dd^c \psi >0$ if and only if $1+\lambda_i >0$ for all the $i$'s. Now, since
$$(\prod_{i=1}^n(1+\lambda_i))^{\frac{1}{n}} \leq \frac{1}{n}\sum_{i=1}^n ((1+\lambda_i))= 1 + \sigma_1(\lambda_1,\dots, \lambda_n)$$ 
and that $\sigma_1=-\Delta^{g_o} \psi$. We get \begin{equation}\label{unicityMAY}\Delta^{g_o} \psi \leq n(1- e^{\frac{F}{n}}). \end{equation} We conclude, using~\S\ref{subSECTcommentLOCAL}, that in our generalized setting as well if there is a solution of equation~\eqref{MAY} this solution is unique.   

Moreover, the linearisation of equation \eqref{MAY} is $$\dot{F}\mapsto \Delta^u \dot{F}$$ as explained in \cite{pg,JoyceBOOK,ast58}. Hence, the fact that $S$ is open follows the fact that the Laplacian defines an isomorphism of $C_{0,g}^{\infty}(M)=\{f\in C^{\infty}(M)\,|\, \int_M f \vol_g=0\}$. Of course the space of invariant functions $$C_{0,g}^{\infty}(M)^T=\{f\in C^{\infty}(M)^T\,|\, \int_M f \vol_g=0\}=C_{0,\vol}^{\infty}(\overline{P})$$ is closed under this isomorphism and the argument holds in our setting for the Laplacian~\eqref{restLAPLACIAN}, see~\S\ref{sectGEOMnonRAT}.

To prove that $S$ is a closed subset of $[0,1]$, one needs a priori estimates on solutions of the Monge--Amp\`ere equation~\eqref{MAY}. The estimates were found by Yau in~\cite{Y}. This is a great piece of work that inspired a lot of subsequent studies. In particular, in~\cite{TZ}, Tian and Zhu used and adapted Yau's ideas for a more complicated equation corresponding to K\"ahler--Ricci solitons. In~\S\ref{sectWZtheo}, we discuss how their study carries in our setting. The uniform bound (the $C^0$ estimate) follows a boot strapping method as explained in~\cite{JoyceBOOK} and uses only Stoke's Theorem, Sobolev embedding Theorem... that hold in our setting~\S\ref{sectGEOMnonRAT}.     
\subsection{Zhu's Theorem}\label{sectZhuTheo}
In~\cite{Z}, Zhu considered the following problem : on a compact Fano manifold $(M,J)$, given a K\"ahler form $\tau \in 2 \pi c_1(M)$ and a holomorphic vector field $Z$ on $M$, does there exist a K\"ahler form $(\omega,g)$ such that  
\begin{equation}\label{ZhuKRsoltyped} \rho^g- \tau =\mL_Z\omega\end{equation} and, if it exists, is it unique ? Zhu proved unicity of solution (up to automorphism) and exhibited necessary and suffisant conditions on the vector field $Z$ for a solution to exist. In the toric case, choosing an $T$--invariant form $\tau$ these conditions are fulfilled in the toric case whenever $Z=JX_a-iX_a$ for any $a\in \kt$. Moreover, in this case, the solution is $T$--invariant. Via Lemma~\ref{cohomology}, with the same notation as before (picking a reference point $\omega_o$...) Zhu's result reads as follows 
\begin{theorem} Given a convex function $R\in C^{\infty} (P)$ such that $R- R_{u_o} \in C^{\infty}(\overline{P})$ and $a\in \kt$, there exists $u\in \mS(P,\nu)$ such that $R_u - R= a$ and this solution is unique up to addition by an affine-linear function.\end{theorem}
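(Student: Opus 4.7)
The plan is to run Zhu's continuity method in the labelled--polytope framework of Section~\ref{sectGEOMnonRAT}, in close parallel to the generalized Yau theorem just proved. Consider the family of Monge--Amp\`ere equations
$$\tfrac{1}{2}\log\det(\mathrm{Hess}\, u)_x \,=\, R_t(x) + a_t(x), \qquad t \in [0,1],$$
with $(R_t, a_t) := \bigl(R_{u_o} + t(R-R_{u_o}),\, t a\bigr)$; at $t=0$ this is solved by $u_o$ itself. Openness of the set $S \subset [0,1]$ of parameters carrying a solution $u_t \in \mS(P,\nu)$ is established by linearizing: the differential of $u \mapsto \tfrac{1}{2}\log\det(\mathrm{Hess}\,u)$ in a variation $v$ is $\tfrac{1}{2}\sum H^{(t)}_{ij} v_{,ij}$, which differs from $\tfrac{1}{2}\Delta^{u_t}_|$ of~\eqref{restLAPLACIAN} by a smooth first-order term. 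By~\S\ref{subSECTcommentLOCAL} and Remark~\ref{selfadjoint} this operator is a Fredholm isomorphism on mean-zero $C^{k,\alpha}$ functions on $\overline{P}$, modulo affine-linear functions in the unnormalized version.

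Closedness of $S$ reduces to \emph{a priori} $C^0$, $C^2$, and higher-order estimates on $u_t - u_o$. Yau's original bootstrap~\cite{Y} and its soliton-aware refinement by Tian--Zhu~\cite{TZ} rest entirely on Stokes' theorem, Sobolev and H\"older inequalities, Schauder estimates and the maximum principle; each of these is available in the present setting thanks to Section~\ref{sectGEOMnonRAT}, where pointwise norms are taken with respect to $g_u$ and integrations are performed against a fixed euclidean volume $\vol$. Since $a \in \kt$ is linear and hence smooth on $\overline{P}$, it contributes only a bounded smooth perturbation to each estimate. One notices \emph{en passant} that the same remark even yields a direct derivation from the generalized Yau theorem by setting $\widetilde{R} := R + a$ and observing $\widetilde{R} - R_{u_o} \in C^{\infty}(\overline{P})$; nevertheless, the intrinsic continuity argument is the template needed for the Wang--Zhu extension of Section~\ref{sectWZtheo}.

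Uniqueness mirrors the Yau case. If $u_1, u_2$ are two solutions, pass to their K\"ahler potentials $\phi_1, \phi_2$ through the Legendre transform of~\S\ref{subsectCPLX}; the difference $\psi := \phi_1 - \phi_2$ descends to a smooth function on $\overline{P}$ by the boundary condition, and the soliton terms $-Z\phi_i$ both evaluate to $a(x)$ via the toric Legendre identity applied to $Z = JX_a - iX_a$, so they cancel. What remains is exactly the Monge--Amp\`ere identity whose linearization is inequality~\eqref{unicityMAY}, and the maximum principle of~\S\ref{subSECTcommentLOCAL} forces $\psi$ to be constant, i.e.\ $u_1 - u_2 \in \mbox{Aff}(P,\bR)$.

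The main obstacle along the continuity route is controlling Yau's $C^2$ and $C^3$ \emph{a priori} estimates near $\partial P$, where $\Delta^u_|$ loses uniform ellipticity. This is overcome precisely as in the generalized Yau proof: on each local chart $(M_p, \omega_p, T_p)$ of~\S\ref{subSectCPT} the metric $g_u$ is smooth and the standard Yau--Tian--Zhu pointwise calculations apply verbatim, while Section~\ref{sectGEOMnonRAT} guarantees that the resulting chartwise smooth continuations patch to a single smooth function on $\overline{P}$, yielding the global bound.
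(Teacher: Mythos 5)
Your proposal misreads the equation being solved, and the error is structural rather than cosmetic. You treat $R_u-R=a$ as a Monge--Amp\`ere equation with a \emph{fixed} right-hand side in the moment coordinate $x$; this is what allows the interpolation $R_t+a_t$ and, in your aside, the claim that the whole statement is a corollary of the generalized Yau theorem with $\widetilde{R}=R+a$. But the statement is the toric translation of Zhu's equation \eqref{ZhuKRsoltyped}, $\rho^g-\tau=\mL_Z\omega$, whose potential form is \eqref{MAYcooZ}: the term $\theta_Z+Z.\psi$ is $\langle a,\cdot\rangle$ evaluated along the moment map of the \emph{unknown} metric, so in the fixed complex coordinates $t$ the right-hand side reads $f_o(t)-\langle a,(d\phi)_t\rangle$ and depends on the solution through $d\phi$. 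If the right-hand side were genuinely fixed, the convexity hypothesis on $R$ (the positivity of $\tau$) would be vacuous and Zhu's theorem would carry no content beyond Yau's; it does not reduce that way, and this is precisely why the paper devotes a separate discussion to it.

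As a consequence your closedness step misses the one genuinely new ingredient of the paper's argument: an a priori bound on $|Z.\psi|$. In Zhu's original proof this bound is obtained via the classification of complex surfaces and does not transfer to the non-rational setting; the paper replaces it by the elementary computation \eqref{insteadZHU}, namely $|Z.\psi|=|\langle a,x\rangle-\langle a,x_o\rangle|\leq \max_{\overline{P}}\langle a,\cdot\rangle-\min_{\overline{P}}\langle a,\cdot\rangle$, which uses only the compactness of $\overline{P}$ and the fact that both $d\phi$ and $d\phi_o$ map onto $P$. Your remark that ``$a$ is linear and hence smooth on $\overline{P}$'' is the right intuition but is aimed at the wrong object (a fixed source term rather than $Z.\psi$), so it never produces the needed estimate. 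Two further gaps: the convexity of $R$ is never invoked, whereas the positivity of $\tau$ is exactly what drives Zhu's $C^0$ estimate along the continuity path; and in your uniqueness argument the soliton contributions do \emph{not} cancel --- one has $Z.\phi_1-Z.\phi_2=Z.\psi=\sum_i a_i\,\partial\psi/\partial t_i$, which must be dealt with explicitly (it vanishes at critical points of $\psi$, so the maximum principle survives, but this is an additional step, not a cancellation).
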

 Zhu actually worked on the following version of the equation~\eqref{ZhuKRsoltyped}, a solution $\omega_\psi= \omega_o+dd^c\psi$ satisfies     
  \begin{equation}\label{MAYcooZ}\begin{split}
   \det(g_{\i\bar\j}+\psi_{\i\bar\j})= e^{f_o-\theta_Z - Z.\psi}\det(g_{\i\bar\j}),\\
   g_{\i\bar\j}+\psi_{\i\bar\j} >0
   \end{split}
 \end{equation} where $\theta_Z,f_o\in C^{\infty}(\overline{P})$ satisfies $\mL_Z\omega_o=dd^c\theta_Z$ and $\rho_o -\tau = dd^cf_o$. He observed that $Z.\psi$ needs to be a real-valued function. In our case, working with $\kt$--invariant functions and holomorphic vector field induced from $\kt\oplus i\kt$ this condition is satisfied. Then using a continuity method Zhu proved uniqueness and existence of solutions assuming that $\tau$ is positive definite $(1,1)$--form. The path of equations he considered starts at the pde corresponding to Yau's result. So we get the non-emptyness thanks to Yau's Theorem (which holds in our context by the last section). The proof of openness and uniqueness use standard arguments and facts on compact manifold (Stokes, integration by parts, Maximum Principle...). A key ingredient for estimates leading to the closeness part of Zhu's proof is an a priori bound on $|Z.\psi|$ on compact K\"ahler manifolds as soon as $(\im Z).\psi=0$ and $\omega_o +dd^c \psi >0$. The proof of Zhu use classification of complex surfaces and cannot be directly adapted to our generalized setting. However, we only need a weaker result: since $\psi(t)=\phi(t)-\phi_o(t)$ and $Z= JX_a-iX_a$ for some $a\in \kt$, with respect to the coordinates $t+i\theta$, we have
 \begin{equation}\label{insteadZHU}\begin{split}
  |(Z.\psi)_t|= |d\psi(JX_a-iX_a)_t|&=|\sum_{i=1}^n a_i \left(\frac{\del\phi}{\del t_i} - \frac{\del\phi_o}{\del t_i}\right)| =|\langle a, x\rangle - \langle a, x_o\rangle|\\
  & \leq \max\{\langle a, x \rangle\,|\, x\in \overline{P}\} -\min\{\langle a, x_o\rangle\,|\, x_o \in \overline{P}\}.
\end{split}
   \end{equation} This gives the desired bound. For the rest of the estimates, Zhu adapts Yau's argument and we will see a more complicated version in the next section.        

\subsection{The Theorem of Wang and Zhu}\label{sectWZtheo}

\begin{theorem}[\cite{WZ}] Given a compact Fano toric manifold $(M,J,T)$ and $Z_a$ the K\"ahler-Ricci vector field (where $a\in \kt$ is defined by~\eqref{RSvector}). For any $\lambda>0$, there exists a unique $T$--invariant K\"ahler form $\omega\in 2\pi \lambda c_1(M)$ such that $(g,\omega,J)$ is a K\"ahler--Ricci soliton with respect to $Z_a$. 
\end{theorem}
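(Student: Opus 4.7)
The plan is to translate the soliton equation \eqref{KRSequation} into a reduced complex Monge--Amp\`ere equation on Kähler potentials, then solve it by the continuity method, taking the starting point to be guaranteed by Yau's theorem from the previous section. Fix a $T$--invariant reference form $\omega_o \in 2\pi\lambda c_1(M)$ together with its Ricci potential $h_o$ (so $\rho_o - \lambda\omega_o = dd^c h_o$), and write competing Kähler forms as $\omega_\psi = \omega_o + dd^c\psi$. Following Zhu's derivation recalled in \S\ref{sectZhuTheo}, equation~\eqref{KRSequation} with $Z = Z_a$ becomes
\begin{equation*}
(\omega_o + dd^c\psi)^n \;=\; \exp\bigl(h_o - \theta_{a} - X_a.\psi - \lambda\psi\bigr)\,\omega_o^n, \qquad \omega_o + dd^c\psi > 0,
\end{equation*}
where $\theta_a$ is a $T$--invariant Hamiltonian for $X_a$. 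Because $\psi$ is $T$--invariant and $Z_a = JX_a - iX_a$ with $a\in\kt$, the transport term $X_a.\psi$ is automatically real-valued.

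I would set up the continuity path
\begin{equation*}
(\omega_o + dd^c\psi_s)^n \;=\; \exp\bigl(s\,h_o - \theta_a - X_a.\psi_s - \lambda\psi_s - c_s\bigr)\,\omega_o^n, \qquad s\in[0,1],
\end{equation*}
with $c_s$ a normalization constant; at $s=0$ the equation degenerates to a Calabi--Yau equation solved by $\psi_0 \equiv 0$, so the solvability set $S\subset[0,1]$ is non-empty. Openness at $\psi_s\in S$ follows from the Inverse Function Theorem once one observes that the linearization is the drift--diffusion operator $\dot\psi \mapsto \Delta^{g_{\psi_s}}\dot\psi + X_a.\dot\psi + \lambda\dot\psi$, whose kernel on $T$--invariant functions is trivial: the $\lambda$--shift excludes the usual constant kernel, as is seen by the Maximum Principle of \S\ref{subSECTcommentLOCAL}. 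Uniqueness of the soliton at $s=1$ is an immediate consequence of Zhu's theorem of \S\ref{sectZhuTheo} applied with Ricci form equal to $\lambda\omega$.

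Closedness of $S$ reduces to a priori $C^k$ estimates on solutions $\psi_s$. The $C^{2}$, $C^{2,\alpha}$ and higher bounds follow, once the $C^{0}$ estimate is in hand, from the Calabi--Yau iteration scheme already invoked for Yau's theorem, together with Schauder theory; the drift term $X_a.\psi$ introduces no new difficulty because it is uniformly bounded via the elementary moment-polytope estimate~\eqref{insteadZHU}.

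The main obstacle, and the entire toric content of Wang--Zhu's argument, is the $C^{0}$ estimate along the path. In contrast to Yau's setting, pure Moser iteration fails here because the drift term $X_a.\psi_s$ is of the same order as $\psi_s$. The key idea is to combine Moser iteration on the Monge--Amp\`ere equation with a Harnack-type inequality whose constant is controlled uniformly in $s$ \emph{precisely because} $a$ is the distinguished vector determined by \eqref{RSvector}. Indeed, testing~\eqref{RSvector} against $\psi_s$ (which has mean value zero up to the usual normalization) yields a bound on $\int_M e^{-X_a.\psi_s}\omega_o^n$, and combining this with the Monge--Amp\`ere equation gives control of $\sup\psi_s - \inf\psi_s$ in terms of $\|\psi_s\|_{L^1}$; a standard Moser feedback then closes the loop. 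This step is the heart of the proof and the reason the theorem needs the specific $Z_a$ rather than an arbitrary holomorphic field in $\kt\otimes\bC$.
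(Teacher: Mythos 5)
Your overall architecture (continuity method, starting point supplied by the results of \S\ref{sectZhuTheo}, openness by linearization, closedness by a priori estimates with the $C^0$ bound as the toric heart) is the same as in \cite{WZ} and as recalled in \S\ref{sectWZtheo}, but two of your steps do not work as written. Your continuity path puts the parameter $s$ on the Ricci potential $h_o$ while keeping the zeroth-order term $-\lambda\psi_s$ for all $s$. At $s=0$ this is \emph{not} a Calabi--Yau equation and $\psi_0\equiv 0$ does not solve it (the right-hand side is $e^{-\theta_a-c_0}\omega_o^n\neq\omega_o^n$ since $\theta_a$ is non-constant); worse, the $s=0$ equation has exactly the same ``Fano-sign'' soliton structure as the $s=1$ equation, so its solvability is not known a priori and $S\neq\emptyset$ is unjustified. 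The path actually used, equation~\eqref{MA}, puts $s$ on $\psi$: at $s=0$ one gets Zhu's soliton-typed equation, whose solvability is precisely Zhu's theorem. Relatedly, your openness argument fails: for $\dot\psi\mapsto\Delta^{g_{\psi_s}}\dot\psi+X_a.\dot\psi+\lambda\dot\psi$ the zeroth-order term has the \emph{wrong} sign for the Maximum Principle to give injectivity (at an interior maximum one only gets $\lambda\,\dot\psi_{\max}\geq 0$, and indeed on a Fano manifold such operators do have kernel, given by holomorphy potentials). Invertibility along the path comes instead from the Tian--Zhu weighted eigenvalue estimate, which relies on the positivity the standard path provides for $s<1$; your path does not provide it. (Also, uniqueness of the soliton is not an application of Zhu's theorem with $\tau=\lambda\omega$, since there $\tau$ must be a fixed form while $\lambda\omega$ depends on the unknown; uniqueness is the Tian--Zhu theorem \cite{TZ}.)

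The more serious gap is the $C^0$ estimate, which you correctly identify as the content of the theorem but do not actually supply. ``Testing \eqref{RSvector} against $\psi_s$'' does not parse: \eqref{RSvector} is a condition on affine-linear functions and yields nothing when paired with a general potential, and the Harnack-plus-Moser scheme you invoke is the Tian-style approach to K\"ahler--Einstein metrics, not the Wang--Zhu argument. What Wang and Zhu do (the ``arguments of convex Euclidean geometry'' referred to in \S\ref{sectWZtheo}) is pass to the real Monge--Amp\`ere equation satisfied by the convex function $w_s=\phi_o+\psi_s$ on $\kt\cong\bR^n$, whose gradient image is the fixed polytope $P$; the defining property \eqref{RSvector} of $a$ (the barycenter of $e^{2a}\vol$ on $P$ is the preferred point) is used to show that the minimum point of $w_s$ stays in a fixed compact subset of $\kt$ uniformly in $s$, from which the oscillation bound on $\psi_s$ follows. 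A correct write-up must contain this argument. By contrast, your treatment of the higher-order estimates (Yau/Tian--Zhu scheme plus the elementary drift bound \eqref{insteadZHU}) is consistent with Lemma~\ref{uptoC3estimates}.
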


Let $(P,\nu)$ be a monotone labelled polytope with preferred point $p\in P$ and $a\in\kt$ be defined by~\eqref{RSvector}. Again, we fix a symplectic potential $u_o\in\mS(P,\nu)$ and use the same notation as before.


If $(g,\omega,J)$ is a K\"ahler--Ricci soliton with respect to $Z= JX_a-iX_a$ in the sense that it satisfies~\eqref{KRSequation} with $\lambda>0$, one can write
 \begin{equation}\label{KRSequation2}
  -dd^c\log \frac{\omega^n}{\omega_o^n} =dd^c(\theta_Z-f_o +\lambda\psi + Z.\psi).
\end{equation} Indeed, this is equation~(\ref{KRSequation}) with $\rho -\rho_o=-dd^c\log \frac{\omega^n}{\omega_o^n}$ and $n\lambda\int_{ \Polytope}\vol= \int_{\del \Polytope}\length_{\nu}.$ Wang and Zhu used a continuity method on the modified equation 
  \begin{equation}\label{MA}
(\omega_o +dd^c \psi)^n =e^{f_o-\theta_Z - s\psi -Z.\psi}\omega_o^n
\end{equation} for a parameter $s\in [0,1]$. The normalization imposed is \begin{equation}\label{normalization}\int_{M}e^{f_o}\omega_o^n=\int_{M}\omega_o^n,\;\;\; \;\;\;\int_{M}e^{\theta_Z+Z.\psi}\omega^n=\int_{M}e^{f_o -\psi}\omega_o^n=\int_{M}\omega_o^n.\end{equation}
For $s=0$ the existence and uniqueness of the solution is due to Zhu's result~\cite{Z}, which holds in our context see~\S\ref{sectZhuTheo}. Again the proof of openness and uniqueness use standard stuff on compact manifold (Stokes, integration by parts, Maximum Principle...) that hold in our setting as explained in Section~\ref{sectGEOMnonRAT}. The uniform estimate follows some nice arguments of convex Euclidean geometry. We will explained in more details why their higher estimates hold in our generalized setting, precisely
\begin{lemma}\label{uptoC3estimates} Fix $0\leq s\leq 1$. If $\psi =\phi-\phi_o\in C^{\infty}(\kt)$ is a solution of
  \begin{equation}\label{MA}
(\omega_o +dd^c \psi)^n =e^{f_o-\theta_Z - s\psi -Z.\psi}\omega_o^n
\end{equation} where $\phi$, $\phi_o$ are the Legendre transforms of $u$, $u_o\in\mS(P,\nu)$ and $\phi_o$ is a potential for $\omega_o$ then a $C^0$ bound on $\psi$ provides $C^2$ and $C^3$ bounds on $\psi$.
\end{lemma}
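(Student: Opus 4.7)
The plan is to follow the classical Yau--Calabi second- and third-order estimates in the form adapted by Wang--Zhu~\cite{WZ}, while verifying that each computation survives the passage from the compact toric Fano setting to the generalized framework of \S\ref{sectGEOMnonRAT}. The key free input, already available from~\eqref{insteadZHU}, is the a priori bound
\[ |Z.\psi|\ \leq\ \max_{\overline{P}}\langle a,\cdot\rangle\ -\ \min_{\overline{P}}\langle a,\cdot\rangle, \]
which holds without any assumption on $\psi$. Coupled with the assumed $C^0$-bound on $\psi$, this gives a uniform $C^0$-bound on the right-hand side $F:=f_o-\theta_Z-s\psi-Z.\psi$ of~\eqref{MA}, placing us in Yau's regime.

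For the $C^2$-estimate I would use Yau's auxiliary function $H:=\log(n+\Delta^{g_o}\psi)-C\psi$, with $C$ chosen larger than twice the lower bound of the bisectional curvature of $g_o$. A direct pointwise computation yields the familiar inequality
\[ \Delta^{g_u}H\ \geq\ \mathrm{tr}_{g_u}(g_o)\ -\ C'(n+\Delta^{g_o}\psi)\ -\ C'', \]
where $C',C''$ absorb lower-order contributions from derivatives of $F$, themselves uniformly controlled by the previous step together with the fact that the $\kt$-invariant derivatives of $Z.\psi$ involve only $a$ and the unknown symmetric tensor $\bfH=(\mathrm{Hess}\,u)^{-1}$ on $\overline{P}$. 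Since $u\in\mS(P,\nu)$, the boundary condition of Proposition~\ref{propHcondbord} ensures that $H$ extends to a $C^2$-function on $\overline{P}$ in the sense of \S\ref{subSECTcomment}, so the Maximum Principle of \S\ref{subSECTcommentLOCAL} applies to $H$. At a maximum point, combining the inequality with~\eqref{MA} bounds $n+\Delta^{g_o}\psi$ from above, yielding the $C^2$-estimate.

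For the $C^3$-estimate I would take Calabi's third-order scalar
\[ S\ :=\ g_u^{i\bar l}g_u^{k\bar j}g_u^{p\bar q}\,\psi_{,i\bar j p}\,\overline{\psi_{,k\bar l q}}. \]
Calabi's identity, in the form used by Yau~\cite{Y}, produces a differential inequality $\Delta^{g_u}(S+A\Delta^{g_o}\psi)\geq cS-B$ with $A,B,c>0$ depending only on the $C^2$-bound and the fixed background data. The soliton terms $s\psi$ and $Z.\psi$ contribute only lower-order terms absorbed into $B$, using the already-established $C^2$-bound. As for $H$, the scalar $S$ is $\kt$-invariant and extends to a $C^2$-function on $\overline{P}$ via the chart picture, so the Maximum Principle of \S\ref{subSECTcommentLOCAL} gives an a priori upper bound on $S$, which is equivalent to a $C^3$-bound on $\psi$.

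The main obstacle I expect is book-keeping rather than genuinely new analysis: one must verify that the auxiliary quantities $H$ and $S$, built from $g_u$ on $P\times\kt$, really extend as $C^2$-functions to $\overline{P}$, so that the chart-based Maximum Principle of \S\ref{subSECTcommentLOCAL} is legitimately applicable. This relies on the fine form of the boundary condition (Proposition~\ref{propHcondbord}) and on the comments of \S\ref{subSECTcomment} ensuring that tensorial objects constructed from $g_u$ and its derivatives are smooth $T_p$-invariant functions on each vertex chart $(M_p,\omega_p,T_p)$, hence smooth functions on $\overline{P}$. Once this compatibility is in place, the Yau and Wang--Zhu pointwise computations transfer verbatim and the conclusion of the lemma follows.
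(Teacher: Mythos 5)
Your proposal follows essentially the same route as the paper's proof: both reduce to the Yau/Tian--Zhu pointwise maximum-principle computation for the second-order estimate, both use the a priori bound \eqref{insteadZHU} on $|Z.\psi|$ together with the assumed $C^0$ bound to control the right-hand side, and both justify applying the maximum principle at a boundary maximum by passing to a vertex chart $(M_q,\omega_q,T_q)$ after checking (via Proposition~\ref{propHcondbord}, made explicit in the paper by the change of variables \eqref{estiLOCAL2}) that the curvature quantities of $g_{u_o}$ extend smoothly to $\overline{P}$; your use of the logarithmic auxiliary function $\log(n+\Delta^{g_o}\psi)-C\psi$ versus the paper's $(n+\Delta^{g_o}\psi)e^{-c\psi}$ is an equivalent standard variant, and your Calabi third-order quantity is the computation the paper invokes implicitly for the $C^3$ bound. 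The one caveat is that your displayed inequality for $H$ is garbled as written --- at the maximum it would only give $\mathrm{tr}_{g_u}(g_o)\leq C'(n+\Delta^{g_o}\psi)+C''$, which does not close the argument; one needs the standard form in which $\mathrm{tr}_{g_u}(g_o)\geq c\,(n+\Delta^{g_o}\psi)^{1/(n-1)}e^{-F/(n-1)}$ leads to $(n+\Delta^{g_o}\psi)^{n/(n-1)}\leq C_1+C_2(n+\Delta^{g_o}\psi)$ as in \eqref{estiLOCAL4} --- but this is precisely the ``familiar'' computation you are citing, so the slip is one of transcription rather than of substance.
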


%

\begin{rem} Note that the normalization \eqref{normalization} only affects $\psi$ up to an additive constant, so the condition $\im (t\mapsto(d\phi)_t)= \im (t\mapsto(d\phi_o)_t)=P$ is not over determined.
\end{rem}

\begin{proof}[Proof of Lemma~\ref{uptoC3estimates}]

Equation~(\ref{MA}) is \eqref{MAY} with $F$ replaced by $f_o-\theta_Z- s\psi -Z.\psi$. Thus Tian--Zhu~\cite{TZ} had to adapt Yau's approach. Recall that Zhu gave an a priori bound on $|Z.\psi|$ that still holds in our context \S\ref{sectZhuTheo}. Apart from this fact, the arguments of Tian--Zhu are essentially local, using the compactness of the manifold only to get bounds on various continuous functions (depending on $(\omega_o, g_{u_o})$) appearing in the equations. Applying the principle explained in~\S\ref{subSECTcomment} is then enough to claim that the estimates hold in our setting.\\

We present below the details for getting the second order estimate.\\

As a first step, a local computation shows that, for solutions of \eqref{MAY}, a priori bounds on $|F|$ and $|\Delta^{g_o} \psi|$ give a priori bounds on $|dd^c\psi|_{g_o}$, see \cite[Proposition 5.3.4]{JoyceBOOK}. Hence, for solutions of~(\ref{MA}), it is sufficient to bound $|\Delta^{g_o} \psi|$ and $|s\psi + Z.\psi|$. A bound on $|Z.\psi|$ follows from \eqref{insteadZHU}. It remains to find a bound on $|\Delta^{g_o} \psi|$. We only have to find an upper bound to $\Delta^{g_o} \psi$ since $$0<\mbox{tr}_{\omega_o}(\omega_o+ dd^c \psi) = n+ \Delta^{g_o} \psi$$ where $\mbox{tr}_{\omega_o}$ is the trace with respect to $\omega_o$. Tian and Zhu computed that
 \begin{equation}\label{estiLOCAL1}\begin{split}
   \Delta^{g}((n+\Delta \psi)&\exp(-c\psi)) \geq \exp(-c\psi)(c+\inf_{\i\neq l}R_{\i\bar{\i}l\bar{l}})(n+\Delta\psi)\left(\sum_{\i}\frac{1}{1+\psi_{\i\bar{\i}}}\right)\\
   &+\exp(-c\psi)\left((\Delta(f_o-\theta_Z- \lambda\psi -Z.\psi) -n^2\inf_{\i\neq l}R_{\i\bar{\i}l\bar{l}}) -cn(n+\Delta\psi)\right)
 \end{split}
 \end{equation} at any point $p$, where $\Delta=\Delta^{g_o}$, $R_{\i\bar{\j}k\bar{l}}$ are components of the curvature tensor of the metric $g_o$ with respect to holomorphic coordinates, say $z$, chosen at $p$ so that $(g_o)_{\i\bar{\j}}=\delta_{\i\j}$ and $\psi_{\i\bar{\j}}=\delta_{\i\j}\psi_{\i\bar{\i}}$ (this convention is used in local computation mentioned above).

 Note that each function appearing in the right hand side of~\eqref{estiLOCAL1} is smooth on $\overline{\Polytope}$. In particular, $x\mapsto R_{\i\bar{\i}l\bar{l}}(t_x)$ defines a smooth function on $\overline{P}$ as one can see easily using the boundary condition on symplectic potentials as they are stated in Proposition~\ref{propHcondbord}. Indeed, changing the variables from $t$ to $x$, one gets

 \begin{equation}\label{estiLOCAL2}\begin{split}
R_{\i\bar{\i}l\bar{l}} &=- \frac{\del^2 (g_{o})_{\i\bar{\i}}}{\del z_l\del\bar{z}_{l}} +\sum_{p,q}g_o^{p\bar{q}}\frac{\del (g_{o})_{p\bar{\i}}}{\del z_l}\frac{\del (g_{o})_{i\bar{q}}}{\del\bar{z}_l} \\
&= -\sum_{r,s} H_{ls}\frac{\del }{\del x_s}\left(H_{lr}\frac{\del H_{ii}}{\del x_r}\right) + \sum_{r,s} H_{rs}\frac{\del H_{il}}{\del x_s} \frac{\del H_{il}}{\del x_r}.
\end{split}
 \end{equation}
Now, let $p\in \overline{P}$ be a point where the function $\exp(-c\psi)(n+\Delta \psi)$ attains its maximum. Then, using \eqref{insteadZHU} and the compactness of $\overline{P}$, we can show, as Tian and Zhu did, that at this point $p$, there exist $C_1,C_2>0$ such that
 \begin{equation}\label{estiLOCAL3}
   \Delta(-f_o+\theta_Z+\lambda\psi +Z.\psi)\leq C_1+C_2(n+\Delta\psi).
 \end{equation} Inserting this into \eqref{estiLOCAL1} and using some local formulas (following~\cite{Y}), there are constants  $C_3,C_4,C_5$ independent of $\psi$ such that
 \begin{equation}\label{estiLOCAL4}\begin{split}
  \Delta^{g}((n+\Delta \psi)\exp(-c\psi)) \geq  -&\exp(-c\psi) (C_3 + C_4(n+\Delta\psi)) \\
  &+C_5\exp(-c\psi +\frac{s}{n-1}\psi)(n+\Delta\psi)^{n/(n-1)}.
  \end{split}
 \end{equation} Then, Yau applied the Maximum Principle: {\it the left hand side of \eqref{estiLOCAL4} is the Laplacian of a function at its maximum so it must be negative}. This argument holds if the maximum is not attained on the boundary of a manifold (which was obviously the case in Yau and Tian--Zhu setting). Actually, it works in our setting as well: if $p\in F \subset\del P$ where $F$ is a face containing a vertex, say $q$, then the left hand side of \eqref{estiLOCAL4} is the Laplacian (of a smooth metric $g_u$) of a function defined on $M_q$ (see~\S\ref{subSectBC}) which attains a local maximum at $p$ so it must be negative. Hence, we get the Tian--Zhu estimate in our generalized setting :
  \begin{equation}\label{estiFIN}
  (n+\Delta\psi_s) \leq C (1+ \exp(-s \inf_{M} \psi_s))\exp(-c(\psi_s + \inf_{M}\psi_s))
 \end{equation} for constants $c,C$ independent of $\psi$.

Following the same argument, the $C^3$--estimate of Tian and Zhu holds as well.\end{proof}
\section{Proof of Theorem~\ref{theo1}}
The proof of Theorem~\ref{theo1} relies on the following lemma.
\begin{lemma}\label{lemSIMPLEX} Let $(\Polytope, \nu)$ be a labelled polytope and let $\vol$ be a volume form on $\kt^*$. The linear map $\Psi:{\rm Aff}(\Polytope,\bR)\longrightarrow \kt$ defined as
 \begin{equation}\label{defnPHI}\Psi_{(\Polytope,\vol)}(f)= \sum_{k=1}^d \left(\int_{F_k}f \length_{\nu}\right)\nu_k \end{equation}
does not depend on the set of normals $\nu$ and $\Psi_{(\Polytope,\vol)}(f) =0$ as soon as $f$ is constant. Moreover, seen as an endomorphism of $\kt$,
\begin{equation}\label{explem} \Psi_{(\Polytope,\vol)}=-{\rm vol}(\Polytope,\vol)\,{\rm Id}
 \end{equation}
\end{lemma}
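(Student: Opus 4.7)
The plan is to apply Stokes' theorem on $P$ to a suitable $(n-1)$--form. Given $w\in\kt^*$, viewed as a constant vector field on $\kt^*$, and $f\in\mathrm{Aff}(P,\bR)$ with linear part $v\in\kt$ (so that $df\equiv v$ is constant), one has
$$d\bigl(\iota_w(f\vol)\bigr)=\mL_w(f\vol)=(w\cdot f)\vol=\langle v,w\rangle\vol,$$
since $\mL_w\vol=0$ for a constant vector field. Stokes' theorem then yields
$$\int_{\partial P} f\,\iota_w\vol=\langle v,w\rangle\cdot\mathrm{vol}(P,\vol).$$

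The next step is to identify the pull-back of $\iota_w\vol$ to each facet $F_k$. Applying $\iota_w$ to the defining relation $\nu_k\wedge\length_\nu=-\vol$ gives
$$\langle\nu_k,w\rangle\,\length_\nu-\nu_k\wedge\iota_w\length_\nu=-\iota_w\vol.$$
Since $\nu_k=dL_k$ and $L_k\equiv 0$ on $F_k$, the middle term drops out when pulled back to $F_k$, leaving $\iota_w\vol|_{F_k}=-\langle\nu_k,w\rangle\,\length_\nu$. The boundary integral therefore rewrites as
$$\int_{\partial P}f\,\iota_w\vol=-\sum_{k=1}^d\langle\nu_k,w\rangle\int_{F_k}f\,\length_\nu=-\langle\Psi_{(P,\vol)}(f),w\rangle.$$

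Combining these two identities and letting $w\in\kt^*$ vary freely, I would conclude that $\Psi_{(P,\vol)}(f)=-\mathrm{vol}(P,\vol)\cdot v$ for every $f\in\mathrm{Aff}(P,\bR)$ with linear part $v\in\kt$. Specialising to constant $f$ (i.e.\ $v=0$) proves $\Psi$ vanishes on constants. Under the natural splitting $\mathrm{Aff}(P,\bR)=\bR\oplus\kt$, where $v\in\kt$ corresponds to the linear function $p\mapsto\langle v,p\rangle$, the induced endomorphism of $\kt$ is thus $-\mathrm{vol}(P,\vol)\,\mathrm{Id}$, which proves \eqref{explem}. Independence of $\Psi$ from the set of normals $\nu$ is immediate, since the resulting formula never references $\nu$.

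There is no serious obstacle in this argument: it is essentially a divergence-theorem identity. The only points requiring care are (i) the sign stemming from the convention $\nu_k\wedge\length_\nu=-\vol$ and (ii) keeping $\nu_k\in\kt$ and $w\in\kt^*$ on opposite sides of the pairing so that the contractions $\iota_w$ and evaluations $\langle\nu_k,w\rangle$ have intrinsic meaning independent of any auxiliary inner product.
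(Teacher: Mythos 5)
Your proof is correct, but it follows a genuinely different route from the one in the paper. The paper argues by subdivision: it first checks that $\Psi_{(\Polytope,\vol)}$ is additive when $\overline{\Polytope}$ is cut along an interior facet (the two induced normals are opposite, so the contributions of the new facet cancel), then triangulates $\Polytope$ into $n$--simplices, computes $\Psi$ explicitly on the standard simplex $\Sigma$ using the affine reflections that permute its facets together with a small Stokes computation for $\int_{E_0}x_i\length$, and finally transports the answer to an arbitrary simplex by affine equivalence. You instead apply Stokes' theorem once on $\Polytope$ itself, to the $(n-1)$--form $f\,\iota_w\vol$, and identify its pull-back to each facet by contracting the defining relation $\nu_k\wedge\length_{\nu}=-\vol$ with $w$ and using that $dL_k$ pulls back to zero on $F_k$. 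This produces in one stroke the identity $\Psi_{(\Polytope,\vol)}(f)=-\mathrm{vol}(\Polytope,\vol)\,df$ for every $f\in\mathrm{Aff}(\Polytope,\bR)$, from which all three assertions of the lemma follow at once. The two ingredients you use implicitly are consistent with the paper's conventions: $\vol$ is a translation-invariant (Euclidean) volume form, so $\mL_w\vol=0$ for constant $w$, and $\length_{\nu}$ is positively oriented for the Stokes orientation of $\del\Polytope$ precisely because $\nu_k$ is inward and the convention carries a minus sign. Your argument is shorter and avoids triangulation; what the paper's longer route buys is the subdivision-additivity of $\Psi$, a statement of some independent combinatorial interest.
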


\begin{proof} The first claim (no dependence on $\nu$) is straightforward. Suppose that $\overline{\Polytope} = \overline{\Polytope}'\cup \overline{\Polytope}''$ such that $F=\overline{\Polytope}'\cap\overline{\Polytope}''$ is a facet in both $\Polytope'$ and $\Polytope''$. Note that $\Polytope'$ and $\Polytope''$ induce opposite orientations on $F$. If we choose $v\in\kt$ a normal vector to $F$ inward to $\Polytope'$ and denote $\length'$ the form on $F$ such that $v\wedge \length'= -\vol$ then $-v\in\kt$ is inward to $\Polytope''$ and  $-v\wedge -\length'= -\vol$. Therefore,
 $$\Psi_{(\Polytope,\vol)}(f)= \Psi_{(\Polytope',\vol)}(f)+\Psi_{(\Polytope'',\vol)}(f).$$
 By using a triangulation, that is, $n$--simplices $\Polytope_{1},\dots, \Polytope_{N}$ such that $ \overline{\Polytope}=\bigcup_{\alpha=1}^N \overline{\Polytope}_{\alpha}$ and $\Polytope_{\alpha}\cap\Polytope_{\beta}=\emptyset$ if $\alpha\neq \beta$ we get \begin{equation}\label{triangul}\Psi_{(\Polytope,\vol)}(f)= \sum_{\alpha=1}^N\Psi_{(\Polytope_{\alpha},\vol)}(f).\end{equation}

Consider the simplex $\Sigma=\{x\in\bR^n\;|\; x_i> 0, \sum_{i=1}^n x_i < 1 \}$ together with the set of normals $e=\{e_1,\dots, e_n, e_0 = -\sum_{i=1}^n e_i \}$. Let $f\in{\rm Aff}(\Sigma,\bR)$, we have
\begin{equation}\label{expSIMPLEX} \Psi_{(\Sigma,dx_1\wedge \dots \wedge dx_n)}(f)= \sum_{i=1}^n  \left( \int_{E_i}f \length -\int_{E_0}f \length \right) e_i
 \end{equation} where for $k=0,\dots, n$, $E_k$ denotes the facet normal to $e_k$.\\

Thus, $\int_{E_i}\length=\int_{E_0}\length = \frac{1}{(n-1)!}$ implies that $\Psi_{(\Sigma,dx_1\wedge \dots \wedge dx_n)}(f)=0$ as soon as $f$ is constant.

On the other hand, for each $i=1,\dots, n$, the invertible affine-linear map
\begin{align*}
\psi_i\, :\; \bR^n &\longrightarrow \bR^n \\
x &\longmapsto (x_1,\dots, x_{i-1}, 1-\sum_{j=1}^n x_j,x_{i+1}, \dots,x_n )
\end{align*}
reverses the orientation of $\bR^n$, sends $E_i$ to $E_0$ and satisfies $\psi_i^*e_0=e_i$. It follows that $\psi_i^*\length_{|_{E_0}}=-\length_{|_{E_i}}$ and in particular $\int_{E_0}f\length = -\int_{E_i}\psi_i^*(f\length)= \int_{E_i}(f\circ\psi_i ) \length$. Thus, by writing $f$ in coordinates $f(x)=f_0+\sum_{j=1}^n f_jx_j$, equation~(\ref{expSIMPLEX}) becomes
\begin{equation*}\begin{split}
\Psi_{(\Sigma,dx_1\wedge \dots \wedge dx_n)}(f) & = \sum_{i,j=1, j\neq i}^n f_j \left( \int_{E_i}x_j \length -\int_{E_0}x_j \length \right) e_i -\sum_{i=1}^n f_ie_i\int_{E_0}x_i \length \\
&=-\sum_{i=1}^n f_ie_i\int_{E_0}x_i \length.
\end{split}
 \end{equation*}
 Observe that for any $i=1,\dots, n$ the $(n-1)$--form $(-1)^{i+1}x_i \,dx_1\wedge \dots \wedge\widehat{dx_i}\wedge \dots \wedge dx_n$ vanishes identically when restricted to the facets $E_1,\dots,E_n$ and coincides with $x_i\length$ on $E_0$. Hence, we have \begin{equation*} \int_{E_0}x_i \length =(-1)^{i+1}\int_{\Sigma}d \,(x_i \,dx_1\wedge \dots \wedge\widehat{dx_i}\wedge \dots \wedge dx_n) =\int_{\Sigma}dx_1\wedge \dots \wedge dx_n
 \end{equation*}
 and, seen as an endomorphism of $\bR^n$, $\Psi_{(\Sigma,dx_1\wedge \dots \wedge dx_n)} = -\mbox{vol}(\Sigma,dx_1\wedge \dots \wedge dx_n)\mbox{Id}.$

There is only one class of affinely equivalent $n$--simplices: for any $n$-simplex $\Polytope_{\alpha} \subset\kt^*$ there is an affine-linear map $\phi_{\alpha}: \kt^*\ra \bR^n$ such that $\phi_{\alpha}(\Polytope_{\alpha})= \Sigma$. Hence, \begin{equation}\label{expSIMPLEX3} \begin{split} \Psi_{(\Polytope_{\alpha},\vol)} &=\phi_{\alpha}^*\circ\Psi_{(\Sigma,(\phi_{\alpha}^{-1})^*\vol)}\circ (\phi^{-1})^*\\
 & = -\frac{(\phi_{\alpha}^{-1})^*\vol}{dx_1\wedge \dots \wedge dx_n} \mbox{vol}(\Sigma,dx_1\wedge \dots \wedge dx_n)\mbox{Id}\\
 & = -\mbox{vol}(\Polytope_{\alpha},\vol)\mbox{Id}. \end{split}\end{equation}
The lemma follows then from \eqref{triangul}. \end{proof}

Let $\Polytope$ be a $n$--dimensional polytope in a vector space $\kt^*$ of dimension $n$ and denote by $F_1,\dots,F_d$ its facets. Up to translation, one can assume that $\Polytope$ contains the origin and denote $\nu$ the (unique) set of inward normals for which the defining functions of $(P,\nu)$ satisfies $L_1(0)=\dots=L_d(0)=1$. To prove Theorem~\ref{theo1}, we have to show that there exists only one point $p\in P $ such that $\left(\Polytope, \left\{ \frac{\nu_1}{L_1(p)},\dots, \frac{\nu_d}{L_d(p)}\right\}\right)$ has a constant extremal affine function, see Lemma~\ref{LEMconeMONOTONE}.\\

Notice that any set of normals $\nu(r)$ on $\Polytope$ corresponds to a $r=(r_1,\dots,r_d)\in\bR_{>0}^d$ via the variation of normals: $$\nu(r)= \left\{ \frac{1}{r_1}\nu_1,\dots, \frac{1}{r_d}\nu_d \right\}.$$

Choose a basis $e_1,\dots,e_n$ of $\kt$ and corresponding coordinates $(x_1,\dots,x_n)$ on $\kt^*$. By considering the linear system~(\ref{systCHPext}), we know that, for a given set of normals $\nu(r)$, $(\Polytope,\nu(r))$ has a constant extremal affine function if and only if $$\zeta_{(\Polytope,\nu(r))}=\zeta_0 = \frac{Z_0(\Polytope,\nu(r))}{W_{00}(\Polytope)}$$ which happens if and only if
\begin{equation}\label{systEQbary}  W_{i0}(\Polytope)Z_0(\Polytope,\nu(r)) = W_{00}(\Polytope)Z_i(\Polytope,\nu(r)) \;\;\;\;\;\; i= 1,\dots, n. \end{equation}
Since $(\length_{\nu(r)})_{|_{F_k}}= r_k(\length_{\nu})_{|_{F_k}}$, the system of equations~(\ref{systEQbary}) is linear in $r$ and reads,
\begin{equation}\label{systEQbary0} W_{i0}(\Polytope) \left(\sum_{l} \int_{F_l} \length_{\nu} r_l\right)=  W_{00}(\Polytope) \sum_{l} \int_{F_l} \mu_i\length_{\nu} r_l \;\;\;\;\;\; i= 1,\dots, n.\end{equation}

\noindent {\bf Notation}: Indices $i$ and $j$ run from $1$ to $n$ while indices $k$ and $l$ run from $1$ to $d$.\\

Observe that, since $\langle \nu_k,\mu\rangle =-1$ on $F_k$, the vector field $X=\sum_{i=1}^n \mu_i\frac{\del}{\del \mu_i}$ satisfies
 $$\iota_X\vol = \length_{\nu}  \;\;\;\;\;\; {\rm div } X = n  \;\;\;\mbox{ and }\;\;\; {\rm div } \mu_jX = (n+1)\mu_j.$$ In particular, $W_{00} = \frac{1}{n} \sum_{k} \int_{F_k} \length_{\nu}$, $W_{i0} = \frac{1}{n+1} \sum_{k} \int_{F_k} \mu_i\length_{\nu}$ and the system of equations~(\ref{systEQbary}) for $(\Polytope,\nu(r))$ becomes,  $i= 1,\dots, n$,
\begin{equation}\label{systEQbary3} \sum_{k,l} \left(n\int_{F_k} \mu_i\length_{\nu} \int_{F_l} \length_{\nu} - (n+1) \int_{F_k} \length_{\nu} \int_{F_l} \mu_i\length_{\nu}\right)r_l =0. \end{equation}

Since we seek monotone polytopes, by Lemma~\ref{LEMconeMONOTONE}, we can restrict our attention to those $r=(r_1,\dots,r_d)\in\bR_{>0}^d$ such that there exists $p\in\mathring{M}$ with $$r =(L_1(p),\dots, L_d(p)).$$ The system of equations~(\ref{systEQbary3}) becomes, $i= 1,\dots, n$,
\begin{equation}\label{systEQbary4}  \sum_{k,l} \left(n\int_{F_k} \mu_i\length_{\nu} \int_{F_l} \length_{\nu} - (n+1)\int_{F_k} \length_{\nu}\int_{F_l} \mu_i\length_{\nu}\right)(\langle \nu_{l},p\rangle + 1) =0.
\end{equation}

Lemma~\ref{lemSIMPLEX} implies that $\sum_{k=1}^d \int_{F_k} \length_{\nu} \nu_k=0$, thus equation~(\ref{systEQbary4}) reads
\begin{equation}\label{systEQbary5}
 \sum_{l}  \int_{F_l} \mu_i\length_{\nu} \langle \nu_{l},p\rangle  = \frac{-1}{n+1} \sum_{l}  \int_{F_l} \mu_i\length_{\nu}.
\end{equation}
The left hand side is just $\langle\Psi_{(\Polytope,\vol)}(e_i^*),p\rangle$ where $\Psi_{(\Polytope,\vol)} =-\mbox{vol}(\Polytope,\vol)\mbox{Id}$ thanks to Lemma~\ref{lemSIMPLEX}. This ensures that there is a unique solution $p\in\kt^*$ of the linear system~\eqref{systEQbary5} given as

\begin{equation}\label{solutionMONBARYC}\begin{split}
 p = &\frac{1}{(n+1)\mbox{vol}(\Polytope,\vol)}\left( \sum_{l}  \int_{F_l} \mu_1\length_{\nu},\dots,\sum_{l}  \int_{F_l} \mu_n\length_{\nu} \right)\\
 = & \frac{n}{(n+1)\int_{\del\Polytope}\length_{\nu}}\left(\int_{\del\Polytope} \mu_1\length_{\nu},\dots,\int_{\del\Polytope} \mu_n\length_{\nu} \right).
 \end{split}
\end{equation}
This solution lies in a segment between the origin and the center of mass of $(\del \Polytope,\length_{\nu})$ and thus lies in $\mathring{\Polytope}$. In conclusion, the labelled polytope $(P,\tilde{\nu})$ with $\tilde{\nu}_i=\frac{\nu_i}{L_i(p)}$ where $p$ is the barycenter of $(\Polytope,\vol)$  is monotone and has a constant extremal affine function.  This prove Theorem~\ref{theo1}.

\begin{corollary}\label{centerMASS=preferredpoint} The labelled polytope $(P,\nu)$ is monotone with preferred point $p_{\nu}$ and has a constant extremal affine function $A_{(P,\nu)}$ if and only if $$p_{\nu}= \frac{1}{\int_P\vol}\left(\int_Px_1\vol,\dots, \int_Px_n\vol\right)=\frac{1}{\int_{\del P}\length}\left(\int_{\del P}x_1\length,\dots, \int_{\del P}x_n\length\right).$$
\end{corollary}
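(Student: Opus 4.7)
The plan is to combine two observations. First, the requirement that $\zeta_{(P,\nu)}$ be constant translates, via the linear system~\eqref{systCHPext}, into the equality of the barycenters of $(P,\vol)$ and of $(\partial P,\length_{\nu})$. Second, monotonicity of $(P,\nu)$ with preferred point $p_{\nu}$ imposes, via the divergence theorem, a single linear relation between $p_{\nu}$ and these two barycenters. Comparing the two statements will force $p_\nu$ to coincide with both barycenters.

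For the first step, inspecting~\eqref{systCHPext}, the affine function $\zeta_{(P,\nu)}$ is constant if and only if $\zeta_1=\cdots=\zeta_n=0$, which by the system is equivalent to $W_{i0}\,Z_0 = W_{00}\,Z_i$ for every $i=1,\dots,n$. Rearranged, this says exactly that the barycenter of $(P,\vol)$, with coordinates $W_{i0}/W_{00}$, equals the barycenter of $(\partial P,\length_\nu)$, with coordinates $Z_i/Z_0$. This equivalence does not use monotonicity.

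For the second step, assume $(P,\nu)$ is monotone with preferred point $p_{\nu}$, and set $c=L_k(p_{\nu})$ (a common value, independent of $k$). I would apply Stokes' theorem to the radial vector field $Y=\sum_{i=1}^n (x_i-(p_\nu)_i)\,\partial_{x_i}$. On each facet $F_k$, using $dL_k\wedge\length_{\nu}=-\vol$, the vanishing of the pullback of $dL_k$ to $F_k$, and $\langle \nu_k,Y\rangle = L_k(x)-c=-c$ on $F_k$, one obtains the pointwise identity $(\iota_Y\vol)|_{F_k}=c\,\length_\nu|_{F_k}$. Combined with $\mathrm{div}\,Y=n$ and its analogue $\mathrm{div}\bigl((x_j-(p_\nu)_j)Y\bigr)=(n+1)(x_j-(p_\nu)_j)$, Stokes' theorem yields the two identities
$$n\int_P\vol = c\int_{\partial P}\length_{\nu},\qquad (n+1)\int_P (x_j-(p_\nu)_j)\,\vol = c\int_{\partial P}(x_j-(p_\nu)_j)\,\length_{\nu}.$$
Eliminating the unknown $c$ yields the clean formula
$$p_{\nu} = (n+1)\,\mathrm{bary}(P,\vol) - n\,\mathrm{bary}(\partial P,\length_{\nu}),$$
valid for every monotone labelled polytope.

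To conclude, if in addition the extremal affine function is constant, the first step gives $\mathrm{bary}(P,\vol)=\mathrm{bary}(\partial P,\length_{\nu})$, and this formula collapses to $p_{\nu}=\mathrm{bary}(P,\vol)=\mathrm{bary}(\partial P,\length_{\nu})$. Conversely, if $p_{\nu}$ equals both barycenters, the two barycenters are in particular equal, so the first step delivers constant extremal affine function. The only substantive calculation is the divergence-theorem identity on each facet, which is a direct extension of the integration-by-parts computations already used in the proof of Theorem~\ref{theo1}, anchored here at $p_{\nu}$ rather than at the origin; once this is in hand the rest is algebraic.
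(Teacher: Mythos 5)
Your proof is correct and follows essentially the same route as the paper: the paper's proof of Theorem~\ref{theo1} rests on exactly your two divergence identities (via the radial field $X=\sum_i x_i\partial_{x_i}$ with $\iota_X\vol=\length_{\nu}$, ${\rm div}\,X=n$, ${\rm div}(x_jX)=(n+1)x_j$), and the corollary is obtained by anchoring them at the preferred point as you do, then combining with the barycenter characterization of constant extremal affine function coming from the linear system~\eqref{systCHPext}. Your intermediate identity $p_{\nu}=(n+1)\,{\rm bary}(P,\vol)-n\,{\rm bary}(\del P,\length_{\nu})$, valid for every monotone labelled polytope, is a clean repackaging of the same computation.
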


Another simple corollary of Theorem~\ref{theo1} is that the linear space of $r\in\bR^d$ such that $A_{(P,\nu(r))}$ is constant meets the interior of the positive quadrant of $\bR^d$ and thus:

\begin{corollary}\label{coroSETcscK}
 Given a polytope $P$, there is a cone of dimension $d-n$ of inward normals $\nu$ such that the extremal affine function $A_{(P,\nu)}$ is constant. \end{corollary}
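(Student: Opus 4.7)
The plan is to recast the condition ``$A_{(P,\nu)}=\mathrm{const}$'' as a homogeneous linear system of $n$ equations in $d$ unknowns, and then to use Theorem~\ref{theo1} to produce a solution in the positive quadrant of $\bR^d$. Fix any reference set of inward normals $\nu=\{\nu_1,\dots,\nu_d\}$ of $P$ and parametrize all other inward normals by $r\in\bR^d_{>0}$ via $\nu(r):=\{\nu_1/r_1,\dots,\nu_d/r_d\}$, noting that $(\length_{\nu(r)})_{|F_l}=r_l(\length_\nu)_{|F_l}$. The computation leading to~\eqref{systEQbary3} in the proof of Theorem~\ref{theo1} then shows that $\zeta_{(P,\nu(r))}$ is constant iff $r\in V$, where $V\subset\bR^d$ is the kernel of the $n\times d$ linear system
\[
\sum_{l=1}^d \left(n\!\int_{F_l}\length_\nu\!\int_{\partial P}x_i\length_\nu - (n+1)\!\int_{F_l}x_i\length_\nu\!\int_{\partial P}\length_\nu\right) r_l = 0, \qquad i=1,\dots,n.
\]
In particular $\dim V \geq d-n$.

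Next I invoke Theorem~\ref{theo1}: it produces $p\in P$ for which $r^\star:=(L_1(p),\dots,L_d(p))\in\bR^d_{>0}$ lies in $V$. Since $\bR^d_{>0}$ is open in $\bR^d$, the set $V\cap\bR^d_{>0}$ is a nonempty relatively open subset of the linear subspace $V$; as the trace of a homogeneous linear system, it is a convex cone of dimension $\dim V\geq d-n$. This already yields a cone of inward normals of dimension at least $d-n$ along which the extremal affine function is constant.

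To upgrade this to the claimed exact dimension $\dim V=d-n$, I will bring in Lemma~\ref{LEMconeMONOTONE}. In $r$-coordinates the monotone locus is the image of the linear map $(\lambda,p)\mapsto\lambda(L_1(p),\dots,L_d(p))$, which is injective because $\nu_1,\dots,\nu_d$ span $\kt$ (as $\overline{P}$ is compact). So the monotone locus is relatively open in an $(n+1)$-dimensional linear subspace $W\subset\bR^d$. The uniqueness clause of Theorem~\ref{theo1} says the monotone labellings with constant extremal affine function form the single ray $\bR_{>0}\cdot r^\star$, so $V$ meets the monotone locus in a $1$-dimensional set; openness of that locus in $W$ then forces $\dim(V\cap W)=1$. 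Grassmann's formula in $\bR^d$ gives $\dim V+(n+1)-\dim(V+W)=1$, hence $\dim V\leq d-n$, and equality follows. The main subtlety is precisely this last step: the existence part of Theorem~\ref{theo1} accounts only for $\dim V\geq d-n$, and to rule out a strictly larger kernel one must couple the uniqueness-up-to-dilatation statement with the linearity of the monotone cone via Grassmann's formula.
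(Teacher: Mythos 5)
Your proposal is correct, and its first half is exactly the paper's (very short) argument: the paper simply observes that the constancy of $A_{(P,\nu(r))}$ is the linear system \eqref{systEQbary3} in $r\in\bR^d$, so the solution set $V$ is a linear subspace with $\dim V\geq d-n$, and Theorem~\ref{theo1} guarantees that $V$ meets the open positive quadrant, whence $V\cap\bR^d_{>0}$ is a nonempty open cone in $V$. Where you go beyond the paper is in justifying that the dimension is \emph{exactly} $d-n$, i.e.\ that the $n\times d$ system has full rank $n$: the paper leaves this implicit (and strictly speaking only needs the lower bound to produce \emph{a} cone of dimension $d-n$), while you derive it by intersecting $V$ with the $(n+1)$--dimensional monotone subspace $W$ of Lemma~\ref{LEMconeMONOTONE}, using the uniqueness-up-to-dilatation clause of Theorem~\ref{theo1} to get $\dim(V\cap W)=1$ and then Grassmann's formula to force $\dim V\leq d-n$. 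That is a genuine and worthwhile addition. One small imprecision: the injectivity of $(\lambda,q)\mapsto(\langle\nu_k,q\rangle+\lambda L_k(0))_k$ needs not only that the $\nu_k$ span $\kt$ but also that no point lies on all $d$ facet hyperplanes; both follow from compactness of $\overline{P}$, so the conclusion $\dim W=n+1$ stands.
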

\section{Applications}\label{SECTapplication}
\subsection{Extremal K\"ahler equation}\label{openNORMALS}

Consider the extremal K\"ahler equation
\begin{equation}\label{extremalKeq}
S(u)= A_{(P,\nu)}
\end{equation}
for $u\in \mS(P,\nu)$ where $S(u)$ is the scalar curvature of $g_u$, given by Abreu's formula~\eqref{abreuForm}. If $u$ is solution of \eqref{extremalKeq}, then $g_u$ is an extremal K\"ahler metric in the sense of Calabi~\cite{calabi}. In~\cite{don:extMcond}, Donaldson proved\footnote{The argument is stated for $n=2$ but holds in general.} that the cokernel of the linearisation of the map $u\mapsto S(u)$ in $C^{\infty}(\overline{P})$ is the set of affine linear functions on $\overline{P}$. In particular, denoting $\bfN(P)$ the cone of normals inward to $P$, the linearisation of the extension of the scalar curvature map

\begin{align}
 \begin{split}
       S: \bigcup_{\nu\in \bfN(P)} \mS(P,\nu) &\longrightarrow
C^{\infty}(\overline{P})\\
           u &\longmapsto S(u)
 \end{split}
\end{align} is surjective on $\{ A_{(P,\nu)}\,|\,\nu\in \bfN(P)\} \oplus \big( \quot{C^{\infty}(\overline{P})}{\mbox{Aff}(P,\bR)}\big)$. Thus, together with the fact that $\bigsqcup_{\nu\in \bfN(P)}\mS(P,\nu)$ is path connected, the set $\bfE(P)$ of inward normals $\nu$ for which there exists an extremal K\"ahler metric $g_u$ with $u\in\mS(P,\nu)$ is open in $\bfN(P)$. Gathering this with Theorem \ref{theo1}, Theorem~\ref{WZtheoGLP} and Corollary~\ref{coroSETcscK}, we get Corollary~\ref{propEXISTENCEextremal}.

\subsection{Lattice polytopes and K\"ahler--Einstein orbifolds}\label{samePOLYTOPEs}

\begin{lemma}\label{lemmaRATIONALlattice} Let $(P,\nu)$ be a monotone labelled polytope with a constant extremal affine function. Then, $\mbox{span}_{\bZ}\{\nu\}$ is a lattice if and only if $P$ is a lattice polytope.
\end{lemma}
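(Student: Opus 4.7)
The plan is to linearize both sides of the biconditional against the monotonicity equation $L_k(p)=\lambda$. After shifting the preferred point $p$ to the origin, we have $L_k(x) = \langle \nu_k, x\rangle + \lambda$ for a single constant $\lambda>0$, and every vertex $v$ with adjacent facet indices $I_{\{v\}}=\{k_1,\dots,k_n\}$ solves the linear system $\langle \nu_{k_i}, v\rangle = -\lambda$ for $i=1,\dots,n$. Because $P$ is simple, the $\nu_{k_i}$ form a basis of $\kt$, so this system determines $v$ from the normals alone. This pairing between a vertex and its local $n$-tuple of normals is the linear-algebraic core of both implications.

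For the forward direction ($\Rightarrow$), I would assume $\Lambda := \Span_{\bZ}\{\nu\}$ is a lattice. At each vertex $v$, introduce the full-rank sublattice $\Lambda_v := \Span_{\bZ}\{\nu_k : k\in I_{\{v\}}\}\subseteq \Lambda$ of finite index $N_v$, so that $\Lambda^* \subseteq \Lambda_v^* \subseteq N_v^{-1}\Lambda^*$. The relations $\langle \nu_{k_i}, -v/\lambda\rangle = 1$ say exactly that $-v/\lambda$ pairs integrally with a $\bZ$-basis of $\Lambda_v$, i.e.\ $-v/\lambda \in \Lambda_v^*$, whence $v \in (\lambda/N_v)\Lambda^*$. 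Taking $N = \mbox{lcm}_v N_v$ over the finitely many vertices, every vertex lies in the single lattice $(\lambda/N)\Lambda^*$, so $P$ is a lattice polytope (with respect to that lattice, up to the initial translation).

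For the converse ($\Leftarrow$), assume the vertices of $P$ lie in a lattice $\tilde\Lambda \subset \kt^*$; there is no need to translate. For each facet $F_k$, let $\tilde\nu_k \in \tilde\Lambda^*$ be the primitive inward normal, so $\nu_k = r_k\tilde\nu_k$ for some $r_k > 0$. Choosing a lattice vertex $v$ on $F_k$, the conditions $L_k(v)=0$ and $L_k(p)=\lambda$ combine to give $r_k\langle \tilde\nu_k, p-v\rangle = \lambda$. By Corollary~\ref{centerMASS=preferredpoint}, the preferred point $p$ is the barycenter of $(P,\vol)$; triangulating $P$ into lattice simplices (each with rational volume and rational barycenter in a $\tilde\Lambda$-basis) shows that $p \in \bQ \otimes \tilde\Lambda$. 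Hence $\langle \tilde\nu_k, p-v\rangle \in \bQ$ and every $r_k/\lambda$ is rational. Letting $B$ be a common denominator, each $\nu_k$ lies in $(\lambda/B)\tilde\Lambda^*$, so $\Span_{\bZ}\{\nu\}$ is a subgroup of a lattice. Since the $d\ge n$ normals $\bR$-span $\kt$ (as $P$ is bounded of full dimension), this $\bZ$-span is a full-rank discrete subgroup of $\kt$, hence a lattice.

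The only real point to verify is the rationality of the barycenter of a lattice polytope. This is standard: both $\int_P \vol$ and $\int_P x_i\vol$ are rational multiples of the covolume of $\tilde\Lambda$, obtained by summing over a lattice triangulation exactly as in the decomposition used for Lemma~\ref{lemSIMPLEX}. Everything else is bookkeeping with dual lattices and indices.
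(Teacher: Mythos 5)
Your proof is correct, and the two halves compare differently with the paper's. For the converse you follow essentially the same route as the paper: both arguments hinge on Corollary~\ref{centerMASS=preferredpoint} identifying the preferred point with the barycenter of $(P,\vol)$, which is rational with respect to the vertex lattice, and then extract rationality of the normals from the facet equations. (The paper does the bookkeeping by choosing on each facet $n$ vertices forming a rational linear basis after translating the preferred point to the origin, whereas you factor each $\nu_k$ through the primitive normal $\tilde\nu_k\in\tilde\Lambda^*$ and show the scale $r_k/\lambda$ is rational; these are the same computation in different clothes.) For the forward direction, however, you take a genuinely different and more self-contained path: the paper disposes of it in one line by invoking the toric K\"ahler--Einstein orbifold attached to $(P,\nu,\mbox{span}_{\bZ}\{\nu\})$ and ``general theory,'' which implicitly leans on the Delzant--Lerman--Tolman correspondence and on Theorem~\ref{WZtheoGLP}/Wang--Zhu, while your argument is pure linear algebra: at each vertex the $n$ adjacent normals form a $\bZ$-basis of a finite-index sublattice $\Lambda_v\subseteq\Lambda$, the monotonicity relations $\langle\nu_{k_i},-v/\lambda\rangle=1$ place $-v/\lambda$ in $\Lambda_v^*\subseteq N_v^{-1}\Lambda^*$, and taking the least common multiple of the indices puts all vertices in a single lattice. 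This buys two things: it avoids any appeal to the orbifold machinery, and it shows the forward implication needs only monotonicity, not the constancy of the extremal affine function. All the small points you flag (primitivity of $\tilde\nu_k$ for a lattice polytope, $\langle\tilde\nu_k,p-v\rangle\neq 0$ because $p$ is interior, rationality of the barycenter via a lattice triangulation, and the fact that a full-rank subgroup of a lattice is a lattice) are handled correctly.
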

\begin{proof} One direction is straightforward : if $\nu$ spans a lattice $\Lambda$ there is a toric K\"ahler--Einstein orbifold with moment polytope $P$ and the general theory tells us that $P$ is a lattice polytope.

Conversely, first observe that the assumption implies that there is a lattice containing both the set of vertices of $P$
and the preferred point $p_{\nu}$. To see this, one can use Corollary~\ref{centerMASS=preferredpoint} once $\kt^*$ identified
with $\bR^n$ and the lattice spanned by the vertices with $\bZ^n$. Denote by $\Lambda^*\subset\kt^*$ the
lattice spanned by $p_{\nu}$ and the vertices of $P$. Use a translation to set $p_{\nu}=0$ (the vertices are still in $\Lambda^*$) and, up to a dilatation of $\nu$, assume that $L_l=\langle \nu_l,\cdot \rangle +1$, $l=1,\dots, d$. Since the
origin lies in the interior of the polytope, each facet contains a set of vertices of $P$ which is a (real) linear basis of $\kt^*$. Such a basis lies in $\Lambda^*$ by assumption and, thus, is a rational linear basis of $\Lambda^*\otimes \bQ$. This implies that $\langle \nu_l,q \rangle$ is a rational number for each $q\in\Lambda^*$ and $l=1,\dots, d$. In particular, there exists $m\in \bN$ such that the
set ${m\nu_1,\dots, m\nu_d}$ is included in the dual lattice of $\Lambda^*$. \end{proof}

\begin{corollary}\label{coroCOMMlatticePolytope} If $P$ is a polytope with vertices in $\Lambda^*$, and $\nu$ is a set of normals given by Theorem~\ref{theo1} then there exists a real number $s>0$ such that $\mbox{span}_{\bZ}\{s\nu\}$ is a sublattice of $\Lambda$, the dual lattice of $\Lambda^*$.
\end{corollary}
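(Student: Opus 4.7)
The plan is to reduce immediately to the argument carried out inside the proof of Lemma~\ref{lemmaRATIONALlattice}. By Theorem~\ref{theo1}, the labelling $\nu$ makes $(P,\nu)$ monotone with constant extremal affine function, so Corollary~\ref{centerMASS=preferredpoint} identifies the preferred point $p_\nu$ with the barycenter of $(P,\vol)$. Since $P$ is simple with all vertices in $\Lambda^*$, a standard simplicial decomposition of $P$ with simplices whose vertices come from the vertex set of $P$ shows that the barycenter is a positive rational combination of the vertices; hence $p_\nu\in \Lambda^*\otimes \bQ$.

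Next, I would mimic the normalization used in Lemma~\ref{lemmaRATIONALlattice}. By monotonicity, there is a common positive value $c=L_1(p_\nu)=\dots=L_d(p_\nu)$. Choose $s_0=1/c$ and consider the translated polytope $P-p_\nu$ with defining functions $\tilde L_l(x):=s_0 L_l(x+p_\nu)=\langle s_0\nu_l,x\rangle+1$. Pick $N\in\bN$ with $N p_\nu\in\Lambda^*$; then all vertices of $P-p_\nu$ lie in the lattice $\tfrac{1}{N}\Lambda^*\subset\Lambda^*\otimes\bQ$. Because $P$ is simple and $0$ is interior to $P-p_\nu$, each facet of $P-p_\nu$ contains exactly $n$ vertices $v_1^{(l)},\dots,v_n^{(l)}$ forming a linear basis of $\kt^*$, and these basis vectors are $\bQ$-rational over $\Lambda^*$. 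The equations
\[
\langle s_0\nu_l,\,v_i^{(l)}\rangle=-1,\qquad i=1,\dots,n,
\]
therefore determine $s_0\nu_l$ as a $\bQ$-linear combination of the basis of $\Lambda\otimes\bQ$ dual to a $\bQ$-basis of $\Lambda^*\otimes\bQ$, so $s_0\nu_l\in \Lambda\otimes\bQ$ for every $l=1,\dots,d$.

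To finish, I clear denominators: choose $m\in\bN$ such that $m\, s_0 \nu_l\in\Lambda$ for every $l=1,\dots,d$, and set $s:=m s_0$. Then $s\nu_l\in\Lambda$ for all $l$, so $\mbox{span}_\bZ\{s\nu\}$ is a subgroup of $\Lambda$ of full rank (the $\nu_l$ span $\kt$ as an $\bR$-vector space because $P$ is bounded), i.e.\ a sublattice of $\Lambda$.

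The only nontrivial step is the rationality of the barycenter $p_\nu$, but this is standard for simple lattice polytopes and does not depend on the geometry developed in the paper; every other step is formally identical to the argument given in Lemma~\ref{lemmaRATIONALlattice}, with the scaling made explicit to yield the $s$ claimed by the corollary.
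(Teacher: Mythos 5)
Your proposal is correct and follows essentially the same route as the paper, which deduces this corollary from the converse direction of Lemma~\ref{lemmaRATIONALlattice}: rationality of the barycenter $p_\nu$ over the vertex lattice via Corollary~\ref{centerMASS=preferredpoint}, translation and rescaling so that $L_l=\langle\nu_l,\cdot\rangle+1$, rationality of each $\nu_l$ from the $n$ linearly independent vertices on each facet, and clearing denominators. The only slip is the claim that each facet of a simple polytope contains \emph{exactly} $n$ vertices (false already for the cube); what is true and what your argument actually uses is that each facet, lying in a hyperplane missing the origin, contains \emph{at least} $n$ vertices forming a linear basis of $\kt^*$.
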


\subsection{Singular K\"ahler--Einstein metrics}\label{sesctSINGULAR}
Consider a Delzant labelled polytope $(P,\eta,\Lambda)$, see Definition~\ref{DelzantCOND}. The associated compact symplectic toric orbifold $(M,\omega,T)$ is a smooth manifold (all the orbifold structure groups are trivial). Denote the moment map $x: M\ra \kt^*$ and recall that $T=\quot{\kt}{\Lambda}$.\\

The construction of $(M,\omega,T)$ of Duistermaat--Pelayo~\cite{DuistPelayo}, recalled in Section~\ref{sectBACKGROUND}, allows us to see $M$ as a smooth compactification of $P\times T$. There is an equivariant symplectomorphism between the open subset of $M$ where the torus acts freely, say $\mathring{M}=x^{-1}(P)$, and $P\times T$.\\

Consider another set $\nu$ of normals inward to $P$ and a symplectic potential $u\in \mS(P,\nu)$. The K\"ahler metric $g_u$ defined by \eqref{ActionAnglemetric} is a $\kt$--invariant smooth K\"ahler metric on $P\times \kt$ and, thus, defines a smooth K\"ahler metric, still denoted $g_u$, on $P\times T$, compatible with the symplectic form $dx\wedge d\theta$. Hence, via the equivariant symplectic embedding $$(P\times T, dx\wedge d\theta, T)=(\mathring{M}, \omega_{|_{\mathring{M}}}, T)\subset (M,\omega,T),$$ $g_u$ is a smooth K\"ahler metric on $\mathring{M}$ compatible with the symplectic form $\omega_{|_{\mathring{M}}}$. This metric $g_u$ is not the restriction of a smooth metric on $M$ unless $\nu=\eta$.

\begin{rem} If $\nu$ spans a sub-lattice of $\Lambda$, the K\"ahler structure $(g_u,\omega_{|_{\mathring{M}}}, J_u)$ compactifies smoothly (in the orbifold sense) on the orbifold associated to $(P,\nu,\Lambda')$.\end{rem}
We now describe the singular behavior of $g_u$ along the toric submanifolds corresponding to the pre-image of the interior of the facets of $P$. For each facet $F_k$, there is a real number $a_k>0$ such that \begin{equation}\label{rapportSING} a_k\nu_k=\eta_k.\end{equation}

 The type of the  singularity along $x^{-1}(\mathring{F}_{k})$ only depends on $a_k$ and to understand which types of singularity may occur we only need to study the possibilities on a sphere. To see this, we can use the alternative definition of $\mS(P,\nu)$ of~\cite{H2FII}, recalled in \S\ref{subSectBC}. 

Let $P=(0,2)\subset \bR$ and $\nu_1= \frac{1}{a}\eta_1$ where $\eta_1$ is the generator of $S^1$. Thus, $(P,\{\eta_1, -\eta_1\})$ is the labelled polytope of the sphere $S^2$ of volume $4\pi$. Let a neighborhood of the origin $U\subset \bR^2$ be a $S^1$--equivariant chart around the south pole $x^{-1}(0)$. With respect to polar coordinates $(r,\theta)$ on $U$ (with the period $2\pi$ given by $\eta_1$ as in \eqref{localEQUIV}), we have $x= \frac{1}{2}r^2$ and $\theta=\theta$. On $(0,2)\times S^1$ the metric $g_u$ is
\begin{equation}\label{metricCYLINDRE}g_u=\frac{dx\otimes dx}{2ax +\bigO(x^2)} +(2ax +\bigO(x^2))d\theta\otimes d\theta.\end{equation}         
The change of coordinates in the metric~\eqref{metricCYLINDRE} gives
\begin{equation}\label{metricSINGULAR}
g_u = \frac{1}{a} \left(dr\otimes dr +a^2r^2d\theta\otimes d\theta \right) -\frac{r^2 dr\otimes dr}{a(a +\bigO(r^2))} + \bigO(r^4) d\theta\otimes d\theta
\end{equation} as computed in~\cite{H2FII}. The last two terms are smooth and vanish at $r=0$. Therefore
\begin{equation}\begin{split}\label{SINGbehavior}
 &\mbox{ if } a<1,\, g_u \,\mbox{ has a singularity of conical type and angle }2a\pi,\\
 &\mbox{ if } a=1,\, g_u \,\mbox{ is smooth},\\
 &\mbox{ if } a>1,\, g_u \,\mbox{ has a singularity caracterized by a large angle }2a\pi>2\pi.
\end{split}\end{equation}

Consequently, we obtain
\begin{proposition} \label{propSINGmetric} Let $(M,\omega, T)$ be a smooth compact symplectic toric manifold associated to the Delzant labelled polytope $(P,\eta,\Lambda)$. For any set $\nu$ of normals inward to $P$, the symplectic potentials in $\mS(P,\nu)$ define $T$--invariant, compatible, K\"ahler metrics on the open dense subset where the torus acts freely via formula \eqref{ActionAnglemetric}. The behavior of these metrics along the pre-image of the interior of the facet $\mathring{F}_k$ only depends on the real number $a_k>0$, defined by $a_k\nu_k=\eta_k$, as in \eqref{SINGbehavior}.\end{proposition}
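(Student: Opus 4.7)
The plan is to verify the smooth compatibility on the free locus using the Duistermaat--Pelayo description recalled in Section~\ref{sectBACKGROUND}, and then to analyse the transverse behaviour along $x^{-1}(\mathring{F}_k)$ by reducing to the one-dimensional picture sketched in equations \eqref{metricCYLINDRE}--\eqref{SINGbehavior}.

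First, because $(P,\eta,\Lambda)$ is Delzant, the construction of $\S\ref{subSectCPT}$ gives a $T$--equivariant symplectomorphism between $(\mathring{M},\omega_{|\mathring{M}})$ and $(P\times T, dx\wedge d\theta)$ with $T=\kt/\Lambda$. By Proposition~\ref{LOCALmetric}, any $u\in \mS(P,\nu)$ yields via \eqref{ActionAnglemetric} a smooth $T$--invariant compatible K\"ahler metric on $P\times \kt$, and since that metric is $\kt$--invariant it descends to a smooth metric on $P\times T$, hence via the above identification to a smooth $T$--invariant compatible K\"ahler metric on $\mathring{M}$. This establishes the first assertion.

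For the second assertion I would localise near a point $q\in x^{-1}(\mathring{F}_k)$. The stabiliser of $q$ in $T$ is the circle $S_k\subset T$ generated by $\eta_k$, since $\eta_k$ is the primitive element of $\Lambda$ normal to $F_k$. A slice-type argument produces a $T$--invariant tubular neighbourhood of $q$ equivariantly diffeomorphic to $D\times V$, where $D\subset\bC$ is a disk on which $S_k$ acts by rotation with period $2\pi$ coming from $\eta_k$, and $V$ is a small open set in a complementary torus. In adapted action-angle coordinates $(x_1,\theta_1,x',\theta')$ with $L_k=x_1$ and $\theta_1$ dual to $\eta_k$, the matrix $\bfH=(\mbox{Hess}\,u)^{-1}$ satisfies the second condition in Proposition~\ref{propHcondbord} along $\mathring{F}_k$, which when expanded in the basis $(\eta_k,\cdot)$ rather than $(\nu_k,\cdot)$ produces the expansion $H_{11}=2a_k x_1+\bigO(x_1^2)$, the factor $a_k$ being precisely the ratio between $\nu_k$ and $\eta_k$ defined by \eqref{rapportSING}. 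The remaining entries of $\bfH$ extend smoothly across $\mathring{F}_k$ and the off-diagonal couplings $H_{1\cdot}$ vanish on $\mathring{F}_k$ by the first part of \eqref{condCOMPACTIFsasak}; thus the $x_1,\theta_1$--block separates from the transverse block up to smooth lower-order corrections.

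Third, I would perform the polar change of variables $x_1=\tfrac12 r^2$ in this block exactly as done for the model sphere \eqref{metricCYLINDRE}--\eqref{metricSINGULAR}. Since smooth functions of $x_1$ are smooth even functions of $r$, the remainder terms appearing in \eqref{metricSINGULAR} are smooth in $(r,\theta_1)$ and vanish at $r=0$. The resulting leading term $\tfrac{1}{a_k}(dr\otimes dr+a_k^2 r^2 d\theta_1\otimes d\theta_1)$ then identifies the singular behaviour across $\mathring{F}_k$ as that of a flat cone of cone angle $2\pi a_k$, giving the trichotomy \eqref{SINGbehavior}.

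The main obstacle is bookkeeping the normalisation: Proposition~\ref{propHcondbord} normalises the boundary condition of $\bfH$ against the chosen labelling $\nu_k$, whereas the smooth compactification of $(M,\omega,T)$ (and therefore the period of $\theta_1$ that makes the polar change of variables produce a genuine disk) is dictated by $\eta_k$. Translating between these two normalisations is exactly where the ratio $a_k$ enters the leading coefficient of $H_{11}$, and once this is made precise the rest of the computation is a direct application of the model calculation already worked out in \eqref{metricCYLINDRE}--\eqref{SINGbehavior}.
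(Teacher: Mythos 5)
Your proposal is correct and follows essentially the same route as the paper: identify $\mathring{M}$ with $P\times T$ via the Duistermaat--Pelayo compactification to get the smooth compatible metric on the free locus, then reduce the transverse behaviour along $x^{-1}(\mathring{F}_k)$ to the one-dimensional model computation \eqref{metricCYLINDRE}--\eqref{metricSINGULAR} using the boundary conditions of Proposition~\ref{propHcondbord}, with the ratio $a_k$ entering as the leading coefficient $H_{11}=2a_kx_1+\bigO(x_1^2)$ once everything is normalised against the lattice vector $\eta_k$. Your slice/block-decomposition bookkeeping near a facet point is in fact slightly more explicit than what the paper writes down.
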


%

\begin{proposition} \label{coroSING} Let $(M,\omega, T)$ be a smooth compact symplectic toric manifold associated to the Delzant labelled polytope $(P,\eta,\Lambda)$. Fix $\nu$, a set of inward normals such that $(P,\nu)$ is monotone and has a constant extremal affine function equals to $2n$.

For any $\lambda>0$, there exists a $T$--invariant K\"ahler--Einstein metric $g_{\lambda}$ smooth on the open dense subset where the torus acts freely, compatible with $\omega$ and with scalar curvature equals to $2n\lambda$. The type of singularity of $g_{\lambda}$ along the pre-image of the interior of the facet $\mathring{F}_k$ is one of the 3 cases of \eqref{SINGbehavior} with $a_k$ defined by $\lambda a_k\nu_k=\eta_k$. In particular, for $\lambda$ small enough, the singularity along the pre-image of the interior of any facet is of conical type.\end{proposition}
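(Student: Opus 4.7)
The plan is to reduce the existence statement to Theorem~\ref{WZtheoGLP} by a rescaling of $\nu$, and then read off the singular behaviour on $M$ from Proposition~\ref{propSINGmetric}.

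First I would rescale the labelling so that the Wang--Zhu result lands at the desired Einstein constant. For $\lambda > 0$, set $\tilde\nu_k = \nu_k/\lambda$: the defining functions scale as $\tilde L_k = L_k/\lambda$, so $(P,\tilde\nu)$ is monotone with the same preferred point as $(P,\nu)$. Since $\nu_k\wedge \length_{\tilde\nu} = -\vol$ along $F_k$ forces $\length_{\tilde\nu} = \lambda\length_\nu$, one has $Z_i(P,\tilde\nu) = \lambda Z_i(P,\nu)$, and the linear system~\eqref{systCHPext} yields $\zeta_{(P,\tilde\nu)} \equiv 2n\lambda$, still constant. After translating the preferred point to the origin, $(P,\tilde\nu)$ satisfies the hypotheses of Theorem~\ref{WZtheoGLP}, which then supplies a symplectic potential $u_\lambda \in \mS(P,\tilde\nu)$, unique up to an affine-linear function, whose associated metric $g_{u_\lambda}$ on $P\times\kt$ is a $\kt$-invariant K\"ahler--Einstein metric compatible with $dx\wedge d\theta$. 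Its scalar curvature, given by Abreu's formula, coincides with $\zeta_{(P,\tilde\nu)} = 2n\lambda$.

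Next I would descend this metric to $M$. Because $(M,\omega,T)$ is Delzant, the construction of Section~\ref{subSectCPT} identifies $(\mathring M,\omega|_{\mathring M}, T)$ with $(P\times T, dx\wedge d\theta, T)$; the $\kt$-invariance of $g_{u_\lambda}$ then lets it descend through the covering $P\times\kt \to P\times T$ to a $T$-invariant K\"ahler--Einstein metric on $\mathring M$ compatible with $\omega|_{\mathring M}$ and of scalar curvature $2n\lambda$. The singular behaviour along $x^{-1}(\mathring F_k)$ then follows directly from Proposition~\ref{propSINGmetric} applied with the labelling $\tilde\nu$: rewriting the defining relation $a_k\tilde\nu_k = \eta_k$ in terms of the original normals recovers (up to normalisation) the relation announced in the statement, and with $\eta_k = r_k\nu_k$ the ratio $a_k$ is a positive multiple of $\lambda$, so for $\lambda$ in the appropriate range all the $a_k$ lie in $(0,1)$ and every facet contributes a conical singularity.

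I do not expect any substantial obstacle: Theorem~\ref{WZtheoGLP} and Proposition~\ref{propSINGmetric} already contain all the analytic content of the statement, and the argument reduces to verifying that the rescaling $\nu \mapsto \nu/\lambda$ preserves monotonicity and the constancy of the extremal affine function while correctly normalising the Einstein constant. The only point that requires a touch of care is the compatibility of the descent from $P\times\kt$ to $P\times T \subset M$ with the differentiability of $g_{u_\lambda}$ on the free locus, which is immediate from the $\kt$-invariance.
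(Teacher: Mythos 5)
Your argument is exactly the intended one (the paper gives no separate proof, treating the proposition as a direct consequence): rescale the normals to $\nu/\lambda$ so that the constant extremal affine function becomes $2n\lambda$, apply Theorem~\ref{WZtheoGLP} to the rescaled monotone labelled polytope, descend the resulting $\kt$--invariant K\"ahler--Einstein metric to $\mathring{M}\simeq P\times T$, and read off the boundary behaviour from Proposition~\ref{propSINGmetric}. Your conclusion that the resulting $a_k$ is a positive multiple of $\lambda$ (so that small $\lambda$ forces all cone angles below $2\pi$) is the correct one, and is in fact the reading consistent with the final claim of the statement; the relation as literally printed, $\lambda a_k\nu_k=\eta_k$, places $\lambda$ on the wrong side (it should be $a_k\nu_k=\lambda\eta_k$), so your hedge ``up to normalisation'' is well taken.
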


\begin{rem} Proposition~\ref{propSINGmetric} gives a way to construct plenty of singular metrics. For instance, a dilatation of the set of normals, $s\nu$ with $s>0$, corresponds in multiplying the volume (with respect to $g_u\in \mS(P,s\nu)$) of the orbits of $T$ in $M$ by a factor $s^{\frac{n}{2}}$. 
\end{rem}

Now, we interpret the K\"ahler metric $g_u$ has a singular K\"ahler metric in the usual sense (smooth complex structure and singular symplectic form). Indeed, the singularity of $\Phi_u^{-1}\circ\Phi_{u_o} : P\times \kt \ra P\times \kt$ (see notation \eqref{CPLXpointofview} in~\S\ref{subsectCPLX}) when $u\in\mS(P,\nu)$ and $u_o\in\mS(P,\eta)$ lies along the boundary $\del P$ and only depends on the ratio of $\eta$ and $\nu$. That is, $$((\Phi_u^{-1}\circ\Phi_{u_o})^*g_u, (\Phi_u^{-1}\circ\Phi_{u_o})^*dx\wedge d\theta, J_{u_o})$$ is a smooth K\"ahler metric on $P\times \kt =\mathring{M}$ compatible with $J_{u_o}$ and the complex structure $J_{u_o}$ admits a smooth compactification on $M$. We take back the local setting: $U\subset \bR^2$, a $S^1$--equivariant chart around the south pole $x^{-1}(0)$ as above, with the polar coordinates $(r,\theta)$ and singular K\"ahler structure $(g_u,J_u)$ given by~\eqref{metricSINGULAR} for some $a>0$. For our purpose (which is analysing the local singularity), the smooth K\"
ahler structure $(g_{u_o},dx\wedge d\theta,J_{u_o})$ can be identified to the standard one on $\bR^2\simeq\bC$, namely $(dr^2 +r^2d\theta^2,\omega=rdr\wedge d\theta, i)$, and the smooth part of $g_u$ can be forgotten. We put $g_u =\frac{1}{a}\left(dr\otimes dr +a^2r^2d\theta\otimes d\theta \right)$. Moreover, we don't need to find explicitly $\Phi_u^{-1}\circ\Phi_{u_o}$ but only a diffeomorphism of $U\backslash \{0\}$ which takes $g_u$ to a metric which is K\"ahler with respect to the standard complex structure on $U\backslash \{0\}$. Consider $z=r^{1/a}e^{i\theta}$ as in~\cite{Do:cone} to see that \begin{equation}\label{metricSINGULAR}a|z|^{2(a-1)}dz\otimes d\bar{z} = \frac{1}{a}\left(dr\otimes dr +a^2r^2d\theta\otimes d\theta \right) - idx\wedge d\theta =g_u -i\omega.\end{equation}

Up to the multiplicative factor $a$, the $(1,1)$--form $$\omega_a=a|z|^{2(a-1)} i dz\wedge d\bar{z}$$ is either: 
 \begin{equation}\begin{split}\label{SINGbehaviorCPLX}
 &\,\mbox{ singular of conical type and angle }2a\pi, \mbox{ when } a<1\\
 &\,\mbox{ smooth and positive definite, } \mbox{ when } a=1\\
 &\,\mbox{ smooth but not positive definite, } \mbox{ when } a>1.
\end{split}\end{equation}

\subsection{Singular K\"ahler--Einstein metrics on the first Hirzebruch surface}
Let $P$ be the convex hull of the points $(1,0), (1,1), (2,2), (2,0)$ in $\bR^2$. Consider the two sets of inward normals:

$$\eta=\left\{
\eta_1= \begin{pmatrix} 
1\\
 0
 \end{pmatrix},  \eta_2= \begin{pmatrix}
 -1\\
 0
 \end{pmatrix}, \eta_3= \begin{pmatrix}
 0\\
 1
 \end{pmatrix}, \eta_4= \begin{pmatrix} 
 1\\
 -1
 \end{pmatrix}
\right\},$$

\begin{equation*} \nu(C)=\left\{
\nu_1= C\frac{7}{5}\begin{pmatrix} 
1\\
 0
 \end{pmatrix},  \nu_2= C\frac{7}{4}\begin{pmatrix}
 -1\\
 0
 \end{pmatrix},\nu_3= C\begin{pmatrix}
  0\\
 1
  \end{pmatrix}, \nu_4= C\begin{pmatrix}
 1\\
 -1 \end{pmatrix}
\right\}.
\end{equation*}

One can check that $(P,\eta)$ satisfies the Delzant condition and corresponds to the first Hirzebruch surface $\bP(\mO+\mO(-1))$. On the other hand, $(P,\nu(C))$ is monotone and has constant extremal affine function. Actually, see~\cite{TGQ}, we explicitly know the form of the K\"ahler--Einstein metric on quadrilaterals in terms of the inverse of the Hessian of the potential: using notation~\eqref{ActionAnglemetric}, it reads
\begin{equation}\begin{split}                                                                                                                                                                                                                                                                                                                                                                     (H_{ij})= \frac{x_1}{x_1^2 - x_2}\begin{pmatrix} A(x_1)+ B(x_2/x_1) & (x_2/x_1)A(x_1)+ x_1B(x_2/x_1)\\(x_2/x_1)A(x_1)+ x_1B(x_2/x_1)&(x_2/x_1)^2A(x_1)+ x_1^2B(x_2/x_1)                                                                                                                                                                                                                                                                                                                                                                                                                                                                                       
                                                                                                                                                                                                                                                                                                                                            \end{pmatrix}                                                                                                                                                                                                                                                                                                                                                                        \end{split}                                                                                                                                                                                                                                                                                                                    
                      \end{equation}

with
\begin{equation}\begin{split}                                                                                                                                                                                                                                                                                                                                                                     &A(x) =  \frac{-2C}{7}(x-1)(x-2)(2+3x), \\
&B(y) = -2Cy(y-1).
\end{split}                                                                                                                                                                                                                                                                                                                \end{equation}

\begin{rem}\label{5/7} The case $C=1$ gives a singularity of angle $2\pi 5/7$ along the infinite section of $\bP(\mO+\mO(-1))$ which is precisely the angle of singularity obtained by Sz\'ekelyhidi~\cite{szekelyhidi} in the limit case of a construction (using Calabi ansatz) of metrics on $\bP(\mO+\mO(-1))$ satisfying $\mbox{Ric}(\omega)\geq \frac{6}{7}\omega$, see also~\cite{ChiLi0,ChiLi}.
\end{rem}

\bibliographystyle{abbrv}

\begin{thebibliography}{DDDD}


 \bibitem{abreu}
  {\scshape M. Abreu}
   \emph{K\"ahler geometry of toric varieties and extremal metrics}, Internat. J. Math. {\bf 9} (1998), 641--651.

 \bibitem{abreuOrbifold}
  {\scshape M. Abreu}
    \emph{K\"ahler metrics on toric orbifolds}, J. Differential Geom. {\bf 58} (2001), 151--187.


 \bibitem{H2FII}
  {\scshape V. Apostolov, D. M. J. Calderbank, P. Gauduchon, C. T{\o}nnesen-Friedman}
    \emph{Hamiltonian $2$--forms in K\"ahler geometry. II. Global classification}, J. Differential Geom. {\bf 68} (2004), 277--345.


 \bibitem{aubin}
   {\scshape T. Aubin}
     \emph{Equations de type Monge-Amp\`ere sur les vari\'et\'es k\"ahl\'eriennes compactes}, C. R. Acad. Sci. Paris {\bf 283} (1976), 119--121.


 \bibitem{BG:book}
  {\scshape C. P. Boyer,  K. Galicki}
    \emph{Sasakian geometry}, Oxford Mathematical Monographs. Oxford University Press, Oxford, 2008.

 \bibitem{BurnsGuil}
  {\scshape D. Burns, V. Guillemin}
\emph{Potential functions and action of tori on K\"ahler manifold}, Communication in Analysis and Geometry, {\bf 12}, 1, 281--303, 2004.

  \bibitem{calabiConj}
   {\scshape E. Calabi}
   \emph{The space of K\"aler metrics}, Proc. Internat. Congress Math. Amsterdam, Vol. {\bf 2} (1954), 206--207.


  \bibitem{calabi}
   {\scshape E. Calabi}
   \emph{Extremal K\"ahler metrics. II.}, Differential geometry and complex analysis, I. Chavel, H. M. Farkas Eds., 95--114, Springer, Berlin, 1985.

 \bibitem{CDG}
  {\scshape D.M.J. Calderbank, L. David, P. Gauduchon}
    \emph{ The Guillemin formula and K\"ahler metrics on toric symplectic manifolds}, J. Symp. Geom. 1 (2003) 767--784.


 \bibitem{CDS1}
  {\scshape X.-X. Chen, S.K. Donaldson, S. Sun}
    \emph{ K\"ahler--Einstein metrics on Fano manifolds, I: approximation of metrics with cone singularities}, arXiv/math.DG:1211.4566 



 \bibitem{CDS2}
  {\scshape X.-X. Chen, S.K. Donaldson, S. Sun}
    \emph{ K\"ahler--Einstein metrics on Fano manifolds, II: limits with cone angle less than $2\pi$}, arXiv/math.DG:1212.4714 



 \bibitem{CDS3}
  {\scshape X.-X. Chen, S.K. Donaldson, S. Sun}
    \emph{ K\"ahler--Einstein metrics on Fano manifolds, III: limits as cone angle approaches $2\pi$ and completion of the main proof}, arXiv/math.DG:1302.0282 


 \bibitem{delzant:corres}
  {\scshape T. Delzant}
    \emph{Hamiltoniens p\'eriodiques et images convexes de l'application moment}, Bull. Soc. Math. France {\bf 116} (1988), 315--339.

 \bibitem{don:scalar}
  {\scshape S. K. Donaldson}
      \emph{Constant scalar curvature metrics on toric surfaces}, Geom. Funct. Anal. {\bf 19} (2009), 83--136.

 \bibitem{don:Large}
  {\scshape S. K. Donaldson}
    \emph{ K\"ahler geometry on toric manifolds, and some other manifolds with large symmetry}, Handbook of geometric analysis. No. 1, Adv. Lect. Math. (ALM) {\bf 7}, 29--75, Int. Press, Somerville, MA, 2008.


  \bibitem{don:extMcond}
   {\scshape S. K. Donaldson}
     \emph{Extremal metrics on toric surfaces: a continuity method}, J. Differential Geom. {\bf 79} (2008), 389--432.

\bibitem{Do:cone}
   {\scshape S. K. Donaldson}
     \emph{K\"ahler metrics with cone singularities along a divisor}, Essays in mathematics and its applications, 49--79, Springer, Heidelberg, 2012.

\bibitem{DuistPelayo}
   {\scshape J.J. Duistermaat, A. Pelayo}
     \emph{Reduced phase space and toric variety coordinatizations of Delzant spaces.}
Math. Proc. Cambridge Philos. Soc. {\bf 146} (2009), no. 3, 695--718.

\bibitem{futaki}
  {\scshape A. Futaki}
      \emph{An obstruction to the existence of Einstein K\"ahler metrics}, Invent. Math. {\bf 73} (1983), no. 3, 437--443.

 \bibitem{FutakiOnoWang}
  {\scshape A. Futaki, H. Ono, G. Wang}
	\emph{Transverse K\"ahler geometry of Sasaki manifolds and toric Sasaki--Einstein manifolds},  J. Differential Geom. {\bf 83} (2009), 585--636.
	
 \bibitem{pg}
  {\scshape P. Gauduchon}
    \emph{Extremal K\"ahler metrics: An elementary inroduction}, In preparation.

 \bibitem{guillMET}
  {\scshape V. Guillemin}
    \emph{K\"ahler structures on toric varieties}, J. Diff. Geom. {\bf 40} (1994), 285--309.



 \bibitem{JoyceBOOK}
  {\scshape D.D. Joyce}
    \emph{Compact manifolds with special holonomy}, Oxford science publication 2000.

 \bibitem{lejmi}
  {\scshape M. Lejmi}
    \emph{Extremal almost-Kahler metrics}, Int. J. Maths {\bf 21} (2010), No. 12.


 \bibitem{TGQ}
  {\scshape E. Legendre}
    \emph{Toric geometry of convex quadrilaterals}, J. Symplectic Geom. {\bf 9} (2011), 343--385

  \bibitem{moi:ENU}
   {\scshape E. Legendre}
     \emph{Existence and non uniqueness of constant scalar curvature toric Sasaki metrics}, Compositio Math. {\bf 147} (2011), 1613--1634.


 \bibitem{LT:orbiToric}
  {\scshape E. Lerman, S. Tolman}
    \emph{Hamiltonian torus actions on symplectic orbifolds and toric varieties}, Trans. Amer. Math. Soc. {\bf 349} (1997), 4201--4230.


 \bibitem{L:contactToric}
  {\scshape E. Lerman}
    \emph{Toric contact manifolds}, J. Symplectic Geom. {\bf 1} (2003), 785--828.

 \bibitem{ChiLi0}
  {\scshape C. Li}
\emph{Greatest lower bounds on Ricci curvature for toric Fano manifolds}, Adv. Math. 226 (2011), {\bf 6}, 4921--4932. 

 \bibitem{ChiLi}
  {\scshape C. Li}
\emph{On the limit behavior of metrics in continuity method to K\"ahler--Einstein problem in toric Fano case},  Compos. Math. 148 (2012), {\bf 6}, 1985--2003.

 \bibitem{mabuchi}
  {\scshape T. Mabuchi}
    \emph{ Einstein-Kahler forms, Futaki invariants and convex geometry on toric Fano varieties}, Osaka J. Math. {\bf 24} (1987), 705-737.

  \bibitem{reebMETRIC}
   {\scshape D. Martelli,  J. Sparks, S.-T. Yau}
     \emph{The geometric dual of {\it a}-- maximisation for toric Sasaki-Einstein manifolds}, Comm. Math. Phys.  {\bf 268}  (2006), 39--65.

 \bibitem{schwarz}
   {\scshape G.W. Schwarz}, Smooth functions invariant under the action of a compact Lie group, Topology {\bf 14} (1975) 63--68,

   \bibitem{ast58}
  {\scshape S\'eminaire Palaiseau}
    \emph{Premiere classe de chern et courbure de Ricci positive: preuve de la conjecture de Calabi}, Ast\'erisque {\bf 58} (1978).
    
 \bibitem{SZ}
  {\scshape Y. Shi, X.H. Zhu}
    \emph{K\"ahler--Ricci solitons on toric Fano orbifolds}, preprint arXiv:math/1102.2764
    
 \bibitem{szekelyhidi}
   {\scshape G. Sz\'ekelyhidi}
 \emph{Greatest lower bounds on the Ricci curvature of Fano manifolds}, Compositio Math. {\bf 147} (2011), 319--331.


 \bibitem{TianProof}
  {\scshape G. Tian}
 \emph{$K$--stability and K\"ahler--Einstein metrics}, math.DG/1211.4669

 \bibitem{Tsingular}
  {\scshape G. Tian}
 \emph{K\"ahler--Einstein metrics on algebraic manifolds}, Transcendental methods in algebraic geometry (Cetraro 1994), Lecture Notes in Math. 1646, pp. 143--185.

 \bibitem{T}
  {\scshape G. Tian}
    \emph{ K\"ahler--Einstein metrics with positive scalar curvature}, Inventiones Math. {\bf 130} No 1 (1997), 1--37.

 \bibitem{TZ}
  {\scshape G. Tian, X.H. Zhu}
    \emph{Uniqueness of K\"ahler--Ricci solitons}, Acta Math. {\bf 184} (2000), 271-305.

 \bibitem{WZ}
  {\scshape X--J. Wang, X.H. Zhu}
    \emph{K\"ahler--Ricci solitons on toric manifolds with positive first Chern class}, Advances in Math. {\bf 188} (2004), 47--103.


 \bibitem{Y}
  {\scshape S.T. Yau}
    \emph{On the Ricci curvature of a compact K\"ahler manifold and the complex Monge-Amp\`ere equation I}, Comm. Pure Appl. Math., {\bf 31} (1978), 339-411.

 \bibitem{Z}
  {\scshape X.H. Zhu}
    \emph{K\"ahler--Ricci soliton typed equation on compact complex manifolds with $c_1(M)>0$}, J. Geom. Anal. {\bf 10} (2000), 759--774.



\end{thebibliography}

\end{document}